\documentclass[a4paper,oneside,portrait,11pt]{AmsArt}

\usepackage[margin=1in]{geometry}

\usepackage{amsfonts,amscd,amsthm,amsgen,amsmath,amssymb}
\usepackage[all]{xy}
\usepackage[vcentermath]{youngtab}
\usepackage {graphicx}
\usepackage{epstopdf}
\usepackage{amsfonts}
\usepackage{amsthm}
\usepackage{amsmath}
\usepackage{amsfonts}
\usepackage{latexsym}
\usepackage{amssymb}
\usepackage{epsfig,color}
\usepackage{graphicx, amssymb}
 \usepackage{color}
 \usepackage{epsfig,color}
\usepackage{graphicx}
\usepackage{caption}
\usepackage{subcaption}
\usepackage{wrapfig}

\newtheorem{rem}{Remark}[] 

\newtheorem{prop}[rem]{Proposition}
\newtheorem{thm}[rem]{Theorem}
\newtheorem{lem}[rem]{Lemma}

\newtheorem{proposition}[rem]{Proposition}
\newtheorem{theorem}[rem]{Theorem}

\newtheorem{ex}[rem]{Example}
\newtheorem{remark}[rem]{Remark}
\newtheorem{example}[rem]{Example}

\theoremstyle{definition}

\newtheorem{definition}[rem]{Definition}
\newtheorem{defn}[rem]{Definition}


\def\CA{{\mathcal A}}

\def\CL{{\mathcal L}}
\def\CM{{\mathcal M}}


\newcommand{\C}{\mathbb{C}}

\newcommand{\R}{\mathbb{R}}



\title{Higgs bundles and $(A,B,A)$-branes}
 
\author{David Baraglia and Laura P. Schaposnik}

\address{School of Mathematical Sciences, The University of Adelaide, SA 5005 Australia.}
\email{david.baraglia@adelaide.edu.au}
\address{ Mathematisches Institut, Ruprecht-Karls-Universit\"{a}t Heidelberg, 69120 Heidelberg, Germany.}
\email{schaposnik@mathi.uni-heidelberg.de }

\date{\today}


\begin{document}
\maketitle

\begin{abstract}
Through the action of anti-holomorphic involutions on a compact Riemann surface $\Sigma$, we construct families of $(A,B,A)$-branes $\CL_{G_{c}}$ in the moduli spaces $\mathcal{M}_{G_{c}}$ of $G_{c}$-Higgs bundles on $\Sigma$. We study the geometry of these $(A,B,A)$-branes in terms of spectral data and show they have the structure of real integrable systems.
\end{abstract}


\setcounter{tocdepth}{1}

\tableofcontents


\section{Introduction}
 
 Since Higgs bundles were introduced in 1987 \cite{N1}, they have found applications in many areas of mathematics and mathematical physics. In particular, Hitchin showed in  \cite{N1} that their moduli spaces give examples of hyperk\"ahler manifolds and that they provide interesting examples of integrable systems \cite{N2}. More recently,  Hausel and Thaddeus \cite{Tamas1}  related Higgs bundles to mirror symmetry, and in the work of Kapustin and Witten \cite{Kap} Higgs bundles were used to give a physical derivation of
the geometric Langlands correspondence.

Classically, a Higgs bundle $(E,\Phi)$ on a compact Riemann surface  $\Sigma$ of genus $g\geq 2$, is given by a holomorphic vector bundle $E$ on $\Sigma$ together with a holomorphic section $\Phi: E\rightarrow E \otimes K$, for  $K$ the canonical bundle of the surface. Moreover, given a complex semisimple Lie group $G_{c}$,  one can define $G_{c}$-Higgs bundles \cite{N2} as pairs $(P,\Phi)$ on $\Sigma$, for $P$ a principal $G_{c}$ bundle and the Higgs field $\Phi$  a holomorphic section of ${\rm ad}(P)\otimes K$, for ${\rm ad}(P)$ the adjoint bundle. By considering parabolic stability, one can construct the moduli space $\CM_{G_{c}}$ of $G_{c}$-Higgs bundles (for details, see \cite[Section 3]{biswas} and references therein), which carries a hyperk\"ahler structure.

Through \textcolor{black}{a choice of three complex structures giving } the hyperk\"ahler structure of the moduli spaces $\CM_{G_{c}}$, one can study submanifolds which are of type $A$ (Lagrangian) or $B$ (complex) with respect to each of the three hyperk\"ahler complex structures $(I,J,K)$, and which are sometimes referred to as branes. We dedicate this paper to the study of branes of type $(A,B,A)$ arising naturally from anti-holomorphic involutions acting on the moduli space of $G_{c}$-Higgs bundles. These are submanifolds which are Lagrangian with respect to the two symplectic structures $\omega_I,\omega_K$ and complex with respect to $J$. Such branes are of considerable interest due to the connections believed to exist between Langlands duality and homological mirror symmetry. Additionally the $(A,B,A)$-branes we construct turn out to be integrable systems making them interesting structures to consider in their own right.

Given an anti-holomorphic involution $f$ on a compact Riemann surface $\Sigma$ of genus $g\geq 2$, one has a natural induced action on the moduli space of $G_{c}$-Higgs bundles as well as on the moduli space of representations of $\pi_{1}(\Sigma)$ into $G_{c}$, which we study in Sections \ref{sec:anti-inv}-\ref{sec:inro-Higgs}, and whose fixed point set we denote by $\CL_{G_{c}}$. By looking at the fixed points of this involution, in Section \ref{sec:aba} we obtain natural $(A,B,A)$-branes, in the sense of Kapustin and Witten \cite{Kap}:

\vspace{0.1 in}
 
\noindent {\bf Theorem \ref{teo1}.} For each choice of anti-holomorphic involution $f$ on a compact Riemann surface, there is a natural $(A,B,A)$-brane $\CL_{G_{c}}$ defined in the moduli space of $G_{c}$-Higgs bundles. 

\vspace{0.1 in}

In order to study these branes, in Sections \ref{sec:hitFib}-\ref{sec:invo-spec} we consider the spectral data associated to $G_{c}$-Higgs bundles introduced in \cite{N2}, for $G_{c}$ a complex classical Lie group, and look at   $\CL_{G_{c}}$ as sitting inside the corresponding Hitchin fibration, obtaining new examples of real integrable systems \textcolor{black}{over some subset $L$ of the base of the  Hitchin fibration}:

 \vspace{0.1 in}

 \noindent {\bf Theorem \ref{thm:intsys}.}
If $\mathcal{L}_{G_c}$ contains smooth points then the restriction of the Hitchin fibration $h|_{\mathcal{L}_{G_c}} : \mathcal{L}_{G_c} \to L$ to $\mathcal{L}_{G_c}$ is a Lagrangian fibration with singularities. The generic fibre is smooth and consists of a finite number of tori.

 \vspace{0.1 in}

Through the study of the spectral data associated to such $(A,B,A)$-branes in Section \ref{sec:invo-spec}, we prove  that $\mathcal{L}_{G_c}$ always contains smooth points for the classical Lie groups. Hence Theorem \ref{thm:intsys} above applies and we obtain many families of real integrable systems through this construction.\

In Section \ref{sec:conn} we study the connectivity of the fibres of $\CL_{G_{c}}$. In the $GL(n,\mathbb{C})$ case we find in Proposition \ref{prop:numbergl} the number of component in terms of the number of components of the fixed point set of an induced involution on the spectral curves. In the rank $2$ case we are able to reduce this to an exact formula in terms of the associated quadratic differential (Theorem \ref{thm:numbergl2}). We also obtain exact formulas in the $SL(2,\mathbb{C})$ case (Theorem \ref{thm:numbersl2b}).

For certain groups $G_c$ we find that the Higgs bundles fixed by the induced involution may have a real or quaternionic structure. In particular we consider this for $SL(2,\mathbb{C})$-bundles in Section \ref{sec:realquat}. We find that if $(E,\Phi)$ is a stable $SL(2,\mathbb{C})$-Higgs bundle then $E$ carries either a real or quaternionic structure. Moreover we determine which of the two possibilities occur in terms of the spectral data associated to $(E,\Phi)$.

Under Langlands duality, it is known (\cite[Section 12.4]{Kap}) that $(A,B,A)$-branes in $\CM_{G_{c}}$ map to $(A,B,A)$-branes in the moduli space $\CM_{^{L}G_{c}}$ of $^{L}G_{c}$-Higgs bundles, for $^{L}G_{c}$ the Langlands dual group of $G_{c}$. We propose that the dual of the $(A,B,A)$-brane $\CL_{G_c}$ is the correspoinding $(A,B,A)$-brane $\CL_{^{L}G_c}$ defined using the same involution $f$ on the Riemann surface. In Section \ref{sec:lang} we present some preliminary evidence in support this proposal.

Following the ideas of Kapustin-Witten \cite{Kap}, and Gukov \cite{gukov07}, the constructions given in this paper can be shown to be closely related to representations of 3-manifolds whose boundary is the Riemann surface $\Sigma$.      
In Section \ref{sec:3man}, we give a brief description of this relation, which is studied and developed in the companion paper \cite{BS13}.

\vspace{0.1 in}

 \noindent{\bf \ Acknowledgements:} The authors would like to thank S. Gukov and N. Hitchin for inspiring and helpful conversations, and to D. Alessandrini, J. Stix and A. Wienhard for useful comments.



\section{Anti-holomorphic involutions on a Riemann surface}\label{sec:anti-inv}

We consider anti-holomorphic involutions $f : \Sigma \to \Sigma$ on a compact Riemann surface $\Sigma$, always taken to be connected. Such an involution $f$ induces corresponding involutions on the moduli space of representations $\pi_1(\Sigma) \to G_c$ and the moduli space of $G_{c}$-Higgs bundles on $\Sigma$, for a given complex semisimple Lie group $G_c$. To construct these extensions, we shall begin by studying the different possible anti-holomorphic involutions $f$, and the induced actions on the fundamental group $\pi_{1}(\Sigma)$.

\subsection{Topological classification}\label{sec:classif}
The classification of anti-holomorphic involutions  of a compact Riemann surface $f : \Sigma \to \Sigma$ is a classical result of Klein. The reader should refer to \cite{GH81} for a through study of this situation. All such involutions on $\Sigma$ may be characterised by two integer invariants $(n,a)$ as follows. The fixed point set of $f$ is known to be a disjoint union of copies of the circle embedded in the surface. Let $n$ denote the number of components of the fixed point set, which by a classical theorem of Harnack, can be at most $g+1$. The complement of the fixed point set has one or two components. Set $a=0$ if the complement is disconnected and $a=1$ otherwise. 

\begin{example} For $\Sigma$ of genus $2$ and $n(\Sigma)=1$,  one may have:

\begin{figure}[h]
        \centering
 \begin{subfigure}[b]{0.3\textwidth}
                 \centering
  \epsfig{file=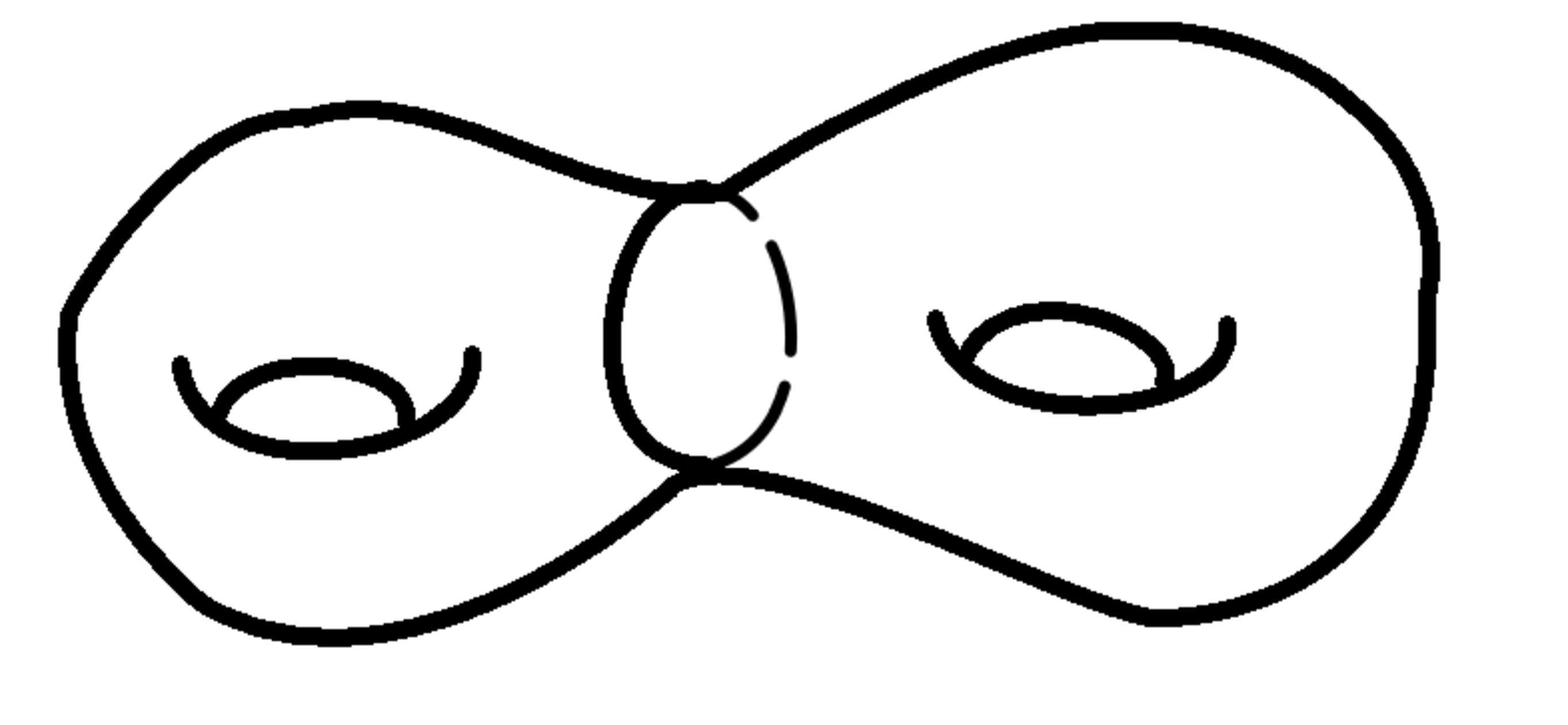, width=\textwidth}
  \caption{\begin{small}$ a(\Sigma)=0$\end{small}}\label{case1}
          \end{subfigure}
          \hspace{0.4 in}
                     \begin{subfigure}[b]{0.3\textwidth}
                             \centering
 \epsfig{file=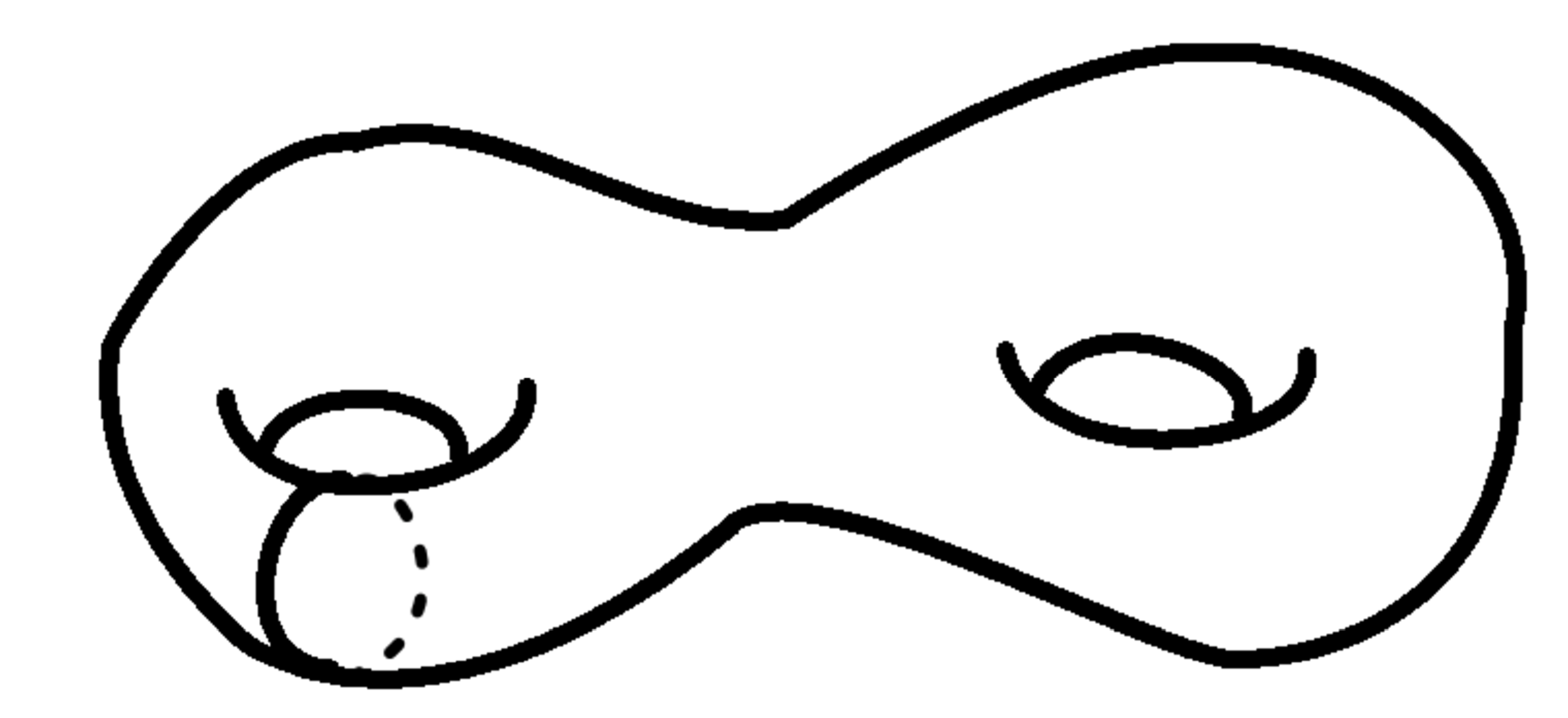, width=\textwidth}
   \caption{\begin{small}$ a(\Sigma)=1$\end{small}}\label{case3}
          \end{subfigure}
          %
\label{cases}
\end{figure}
\end{example}
 \vspace{-0.2 in}
  \begin{remark}
 One should note that an anti-involution $f$ as considered in this paper can also be found in the literature as an anti-conformal map of $\Sigma$, whose species $Spi(f)$ is $+k$ or $-k$ according to whether $\Sigma - {\rm Fix}(f)$ is connected or not (e.g. see \cite{acc2} and references therein).
 \end{remark}

\begin{prop}[\cite{GH81}]\label{pro:topcond}
Let $g$ be the genus of $\Sigma$. The invariants $(n,a)$ associated to an orientation reversing involution on $\Sigma$ satisfy the following conditions
\begin{itemize}
\item{$0 \le n \le g+1$.}
\item{If $n=0$ then $a=1$. If $n=g+1$ then $a=0$.}
\item{If $a=0$ then $n = g+1 \; ( {\rm mod} \; 2)$.}
\end{itemize}
Conversely any pair $(n,a)$ satisfying these conditions determines such an orientation reversing involution on $\Sigma$, unique up to homeomorphism.
\end{prop}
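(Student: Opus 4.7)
\smallskip

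\noindent\textbf{Proof proposal.} The plan is to analyse the quotient $\Sigma/f$ together with the fixed locus ${\rm Fix}(f)$, combining Harnack's theorem with Euler-characteristic computations to pin down the necessary conditions, and then to invoke the classification of compact surfaces (with or without boundary, orientable or not) for the converse.

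\smallskip

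First I would set up the basic geometry. Since $f$ is anti-holomorphic of order $2$, ${\rm Fix}(f)$ is a smooth $1$-dimensional submanifold of $\Sigma$, hence a disjoint union of $n$ circles. Harnack's classical theorem, proved by considering the action of $f_{*}$ on $H_{1}(\Sigma;\mathbb{F}_{2})$ together with a Lefschetz-type computation, yields $n \le g+1$, giving the first bullet. The case $n=0$ is immediate: if there are no fixed circles then the complement of ${\rm Fix}(f)$ is all of $\Sigma$, which is connected (as $\Sigma$ is), forcing $a=1$ by definition. For the clause $n=g+1 \Rightarrow a=0$, I would argue by Harnack sharpness: the upper bound $g+1$ is attained precisely in the separating case, which I will actually deduce from the third bullet applied in reverse (see below).

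\smallskip

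For the parity condition, assume $a=0$, so $\Sigma \setminus {\rm Fix}(f)$ splits as $\Sigma^{+} \sqcup \Sigma^{-}$ with $f(\Sigma^{+})=\Sigma^{-}$. Each closed-up piece $\overline{\Sigma^{+}}$ is a compact orientable surface of some genus $g'$ with exactly $n$ boundary circles, hence Euler characteristic $2-2g'-n$. Cutting $\Sigma$ along the fixed circles gives
\[
\chi(\Sigma) \;=\; \chi(\overline{\Sigma^{+}}) + \chi(\overline{\Sigma^{-}}) - \chi({\rm Fix}(f)) \;=\; 2(2-2g'-n),
\]
so $2-2g = 2(2-2g'-n)$, which simplifies to $n = g+1-2g'$. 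This proves the parity statement $n \equiv g+1 \pmod 2$, and also confirms $n \le g+1$ in this case with equality iff $g'=0$. Conversely, when the upper bound $n=g+1$ is attained, one checks that the fixed set must separate (otherwise the Euler-characteristic count on the single complementary component forces $g' < 0$), giving $a=0$.

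\smallskip

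For the converse, I would give an explicit construction producing each admissible pair $(n,a)$ and then show any two involutions with the same $(n,a)$ are topologically conjugate. For existence, in the separating case $a=0$ one takes two copies of a surface of genus $g'=(g+1-n)/2$ with $n$ boundary circles and glues them by the identity along the boundary, letting $f$ swap the two copies. In the non-separating case $a=1$, one starts from the non-orientable quotient surface $\Sigma/f$ (a Klein surface of the appropriate type with $n$ boundary circles) and takes its orientation double cover; the deck transformation gives $f$. In both constructions the invariants come out to be the prescribed $(n,a)$. For uniqueness up to homeomorphism, I would observe that the quotient $\Sigma/f$ is a compact surface with boundary, orientable iff $a=0$, and that its Euler characteristic and number of boundary components are determined by $(n,a,g)$. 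The classification of compact surfaces with boundary then determines $\Sigma/f$ up to homeomorphism, and since $\Sigma \to \Sigma/f$ is the orientation double cover branched along ${\rm Fix}(f)$ when $a=1$ and the trivial double cover when $a=0$, the pair $(\Sigma,f)$ is recovered up to homeomorphism from $\Sigma/f$.

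\smallskip

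The main obstacle is the uniqueness clause: one must check carefully that the topological type of the branched (or unbranched) double cover together with its deck involution is genuinely determined by $\Sigma/f$, rather than by some finer data such as a labelling of the boundary components. This is handled by the mapping class group acting transitively on suitable configurations, but it is the step most easily glossed over.
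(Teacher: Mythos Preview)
Your argument is essentially correct and follows the standard route; the paper itself does not prove this proposition but simply cites it from Gross--Harris \cite{GH81}, giving only a short sketch of the construction for the converse (take the orientation double cover of a compact surface with boundary and glue the resulting pairs of boundary circles). Your Euler-characteristic arguments for the necessary conditions and your construction for existence are the standard ones and match the spirit of the paper's sketch.

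One small imprecision worth tightening: you write that $\Sigma \to \Sigma/f$ is ``the orientation double cover branched along ${\rm Fix}(f)$ when $a=1$ and the trivial double cover when $a=0$''. This is not quite right as stated. The orientation double cover of the surface-with-boundary $\Sigma/f$ has \emph{two} boundary circles over each boundary circle of $\Sigma/f$, so it has $2n$ boundary components, not $n$; one must then glue these in pairs (via the deck involution restricted to the boundary) to recover the closed surface $\Sigma$. This is exactly the construction the paper outlines after the proposition. Your uniqueness argument still goes through once this is said correctly, since the gluing is determined by the deck involution; the point you flag about the mapping class group acting transitively is indeed the substantive step, and it is what Gross--Harris carry out.

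A second minor point: in your clause ``otherwise the Euler-characteristic count on the single complementary component forces $g' < 0$'', the relevant computation is that when $a=1$ the single complementary piece (after cutting) is a connected orientable surface with $2n$ boundary circles and Euler characteristic $2-2g$, hence genus $g-n$; requiring this to be non-negative gives $n \le g$, which is what you want. Writing this out would make the step transparent.
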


Orientation reversing involutions on a compact Riemann surface $\Sigma$ may be constructed as follows. Take a compact surface $S$ (orientable or non-orientable) with boundary, and let $\tilde{S}$ be its orientation double cover, which is an oriented surface with boundary. The involution $f$ which interchanges the sheets of the double cover reverses the orientation of $\tilde{S}$. To each boundary component of $S$ we obtain a corresponding pair of boundary components in $\tilde{S}$ which are exchanged by $f$. Then we can define $\Sigma$ by gluing together the boundary components of $\tilde{S}$ through the restriction of $f$ to the boundary. Moreover, such a surface $\Sigma$ has a natural orientation reversing involution induced by $f$. One finds that the conditions of Proposition \ref{pro:topcond} are satisfied by this construction.

In order to see that the above construction produces all such anti-holomorphic involutions on a compact Riemann surface, consider $\Sigma$ a compact Riemann surface with an anti-holomorphic involution $f$ which has fixed points. In a neighbourhood of a fixed point, one can find a local coordinate $z$ such that the involution $f$ is given by the complex conjugation $f:z\mapsto \bar z$ (e.g. see \cite{Se91} or \cite{GH81}). Hence, the fixed point set can be seen as a union of copies of the unit circle embedded in the surface. By making cuts in $\Sigma$  around these circles we obtain a Riemann surface $\Sigma'$ with two boundary components for each cut, and a fixed point free anti-holomorphic involution, which permutes each pair of components. Since the induced involution on $\Sigma '$ does not fix any boundary component, it pairs off components and thus $\Sigma'$ is the orientation double cover of a Riemann surface with boundary.



\subsection{Action on spin structures}

Recall that a spin structure on an oriented Riemannian $n$-manifold $M$ with $SO(n)$-frame bundle $P$ may be defined as a class $\xi \in H^1(P,\mathbb{Z}_2)$ which restricted to any fibre of $P \to M$ agrees with the class in $H^1(SO(n),\mathbb{Z}_2)$ corresponding to the double cover $Spin(n) \to SO(n)$. In the case that $M = \Sigma$ is a Riemann surface we may identify the frame bundle $P$ with the bundle $U\Sigma$ of unit tangent vectors, since any unit vector can be uniquely extended to an oriented frame.

Throughout the paper we let $K$ denote the canonical bundle of a Riemann surface $\Sigma$. Given a compatible Riemannian metric on $\Sigma$, the spin structures correspond to theta characteristics, holomorphic line bundles $L$ for which there is an isomorphism $L^2 \simeq K$. 
The hermitian structure on $K$ determines a hermitian structure on $L$ such that the bundle of unit vectors in $L$ is a double cover of the unit vectors of $K$. Moreover the unit vectors of $K$ can be naturally identified with the unit tangent bundle $U\Sigma$, so from $L$ we obtain a double cover of $U\Sigma$ which is then a spin structure. The spin structures on $\Sigma$ for different choices of metric may be canonically identified, and the spin structure associated to $L$ does not depend on the choice of metric, for different choices of metrics compatible with the complex structure on $\Sigma$.

For $f $ an anti-holomorphic involution on $\Sigma$, the induced map $f^* : H^1(U\Sigma , \mathbb{Z}_2) \to H^1(U\Sigma , \mathbb{Z}_2)$ preserves the subset of classes in $H^1(U\Sigma , \mathbb{Z}_2)$ which define spin structures, thus $f$ has a natural action on the set of spin structures. Moreover, from  \cite{A71,GH81} we have:

\begin{prop}\label{prop.fixedspin}
Given  $\Sigma$ a compact oriented surface and $f : \Sigma \to \Sigma$ an orientation reversing involution,  there exists a spin structure preserved by $f$.
\end{prop}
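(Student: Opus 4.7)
The plan is to translate the statement into a cohomological question about theta characteristics. Since $f$ is anti-holomorphic, $\overline{f^{*}K}\simeq K$, so the assignment $L\mapsto \overline{f^{*}L}$ defines a $\mathbb{Z}/2$-action on the set of theta characteristics that corresponds, via the identification explained in the paragraph preceding the proposition, to the action of $f$ on spin structures. Fix any theta characteristic $L_{0}$; since the set of theta characteristics is an $H^{1}(\Sigma,\mathbb{Z}_{2})$-torsor, we obtain a unique class $\eta \in H^{1}(\Sigma,\mathbb{Z}_{2})$ with $\overline{f^{*}L_{0}} \simeq L_{0}\otimes \eta$. Applying the involution twice yields $\eta + f^{*}\eta = 0$, so $\eta$ is $f^{*}$-invariant.

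A modified theta characteristic $L_{0}\otimes \mu$ is $f$-fixed precisely when $\mu + f^{*}\mu = \eta$, so existence of an invariant spin structure is equivalent to showing that the $f^{*}$-invariant class $\eta$ lies in the image of the norm map $N : H^{1}(\Sigma,\mathbb{Z}_{2}) \to H^{1}(\Sigma,\mathbb{Z}_{2})^{f^{*}}$ defined by $\mu \mapsto \mu + f^{*}\mu$. To establish this I would exploit the explicit description from Section \ref{sec:classif}: $\Sigma$ arises by gluing paired boundary components of the orientation double cover $\tilde S$ of a compact (possibly non-orientable) surface $S$ with boundary. Any oriented surface with boundary such as $\tilde S$ admits spin structures, and in a local coordinate $z$ near a fixed circle of $f$ in which $f(z)=\bar z$, the standard spin structure defined by $\sqrt{dz}$ is manifestly invariant, providing local invariant models at each fixed circle. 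In the free case $n=0$, the quotient $\Sigma/f$ is a closed non-orientable surface and a $\mathrm{Pin}$-structure on it pulls back to an $f$-invariant spin structure on $\Sigma$.

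The main obstacle is assembling these local invariant models into a globally coherent invariant spin structure on $\Sigma$: across the $n$ fixed circles one must make consistent choices of lifts of $f$ to the principal $\mathrm{Spin}$-bundle, and this is where the parity constraints carried by the topological invariants $(n,a)$ of Proposition \ref{pro:topcond} enter. A convenient way to organize the bookkeeping is combinatorially, using Johnson's description of spin structures as quadratic forms $q : H_{1}(\Sigma, \mathbb{Z}_{2}) \to \mathbb{Z}_{2}$ refining the mod-$2$ intersection pairing, then checking on an $f$-equivariant cell decomposition that among the $2^{2g}$ such forms at least one is $f_{*}$-invariant; equivalently, this verifies the required surjectivity of $N$ onto its target. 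This matches the content of the classical results of \cite{A71} and \cite{GH81} cited in the proposition, from which the claim follows.
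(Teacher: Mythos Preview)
The paper does not supply its own proof of this proposition: it is stated with a citation to \cite{A71,GH81} and nothing more. So there is no ``paper's proof'' to compare against; any argument you give goes beyond what the authors provide.

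Your cohomological reformulation is correct and useful. Fixing a theta characteristic $L_0$, writing $\overline{f^*L_0}\simeq L_0\otimes\eta$ with $\eta\in H^1(\Sigma,\mathbb{Z}_2)$, observing $f^*\eta=\eta$, and reducing the question to whether $\eta$ lies in the image of $N(\mu)=\mu+f^*\mu$ is exactly the right framework, and your identification of this action with the action on spin structures is the content of the paper's Proposition~\ref{prop.actionspin}. The $\mathrm{Pin}^-$ argument in the fixed-point-free case is also sound: every surface admits a $\mathrm{Pin}^-$ structure (the obstruction $w_2+w_1^2$ vanishes in dimension two), and its pullback to $\Sigma$ is an $f$-invariant spin structure.

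However, as written this is not yet a proof. You yourself flag the gap: in the case $n>0$ you describe local invariant models but do not show they glue, and the combinatorial verification via Johnson's quadratic forms is only announced, not carried out. This is the entire content of the proposition. Note in particular that the cokernel of $N$ inside $H^1(\Sigma,\mathbb{Z}_2)^{f^*}$ is the Tate group $\hat H^0(\mathbb{Z}_2,H^1(\Sigma,\mathbb{Z}_2))$, which is typically nonzero (for instance when $f^*$ acts trivially on $H^1(\Sigma,\mathbb{Z}_2)$ the map $N$ is identically zero), so one genuinely needs an argument specific to the class $\eta$ rather than a general surjectivity statement. Your final sentence, deferring to \cite{A71,GH81}, puts you in the same position as the paper: citing rather than proving. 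If you want a self-contained argument, the cleanest route is probably the one you gesture at---take an $f$-equivariant handle decomposition adapted to the topological type $(n,a)$ and exhibit explicitly a quadratic refinement of the mod-$2$ intersection form that is $f_*$-invariant---but this computation must actually be performed case by case.
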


There is also a natural action of $f$ on the theta characteristics. Since $f$ is anti-holomorphic there is a natural isomorphism $f^*(\overline{K}) \simeq K$. It follows that if $L$ is a theta characteristic, then so is $f^*(\overline{L})$.

\begin{prop}\label{prop.actionspin}
The action of $f$ on the set of spin structures of the Riemann surface $\Sigma$ interpreted as theta characteristics agrees with the action on $H^1(U\Sigma , \mathbb{Z}_2)$ induced by the differential   $f_* : U\Sigma \to U\Sigma$.
\end{prop}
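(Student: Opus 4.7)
The plan is to identify, as double covers of $U\Sigma$, the spin structure associated to the theta characteristic $f^*\overline L$ with the pullback of the spin structure associated to $L$ along $f_* \colon U\Sigma \to U\Sigma$; such an identification manifestly intertwines the two actions of $f$ on spin structures.

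First I would unpack the spin structure of $f^*\overline L$. By the construction preceding the proposition, it is the double cover
$$
U_{f^*\overline L} \;\longrightarrow\; U_{f^*\overline{L^2}} \;=\; U_{f^*\overline K} \;\xrightarrow{\;\alpha\;}\; U_K \;\cong\; U\Sigma,
$$
in which the first arrow is fibrewise squaring and $\alpha$ is the $U(1)$-bundle isomorphism over $\Sigma$ induced by the canonical holomorphic isomorphism $f^*\overline K \cong K$ (which exists precisely because $f$ is anti-holomorphic). Since fibrewise complex conjugation is a $U(1)$-bundle isomorphism, as smooth circle bundles we may identify $U_{f^*\overline L} = f^*U_L$ and $U_{f^*\overline K} = f^*U_K$, and the squaring map becomes the pullback $f^*(\mathrm{sq}_L)$ of the squaring $\mathrm{sq}_L \colon U_L \to U_K$ attached to $L$.

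On the other hand, the pullback along the diffeomorphism $f_*$ of the spin structure $\mathrm{sq}_L \colon U_L \to U_K \cong U\Sigma$ of $L$ may be written as
$$
f^*U_L \;\xrightarrow{\;f^*\mathrm{sq}_L\;}\; f^*U_K \;\xrightarrow{\;\beta\;}\; U_K \;\cong\; U\Sigma,
$$
where $\beta$ is the $U(1)$-bundle isomorphism over $\Sigma$ induced by $f_* \colon U\Sigma \to U\Sigma$ (using $U\Sigma \cong U_K$). Thus the two double covers of $U\Sigma$ will coincide provided $\alpha = \beta$.

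The equality $\alpha = \beta$ is the key compatibility and is the main technical point; it is a matter of unwinding definitions, since both sides are built from the real differential $df$. Anti-holomorphicity is visible in $\alpha$ through the conjugate-linearity of $df^*$ on $T^{*(1,0)}\Sigma$, and in $\beta$ through the real action of $df$ on unit tangent vectors. In a local conformal coordinate $z$ in which $f$ reads $z \mapsto \bar z$, one verifies directly that both maps send a unit tangent vector $v_\theta = \cos\theta\,\partial_x + \sin\theta\,\partial_y$ to $v_{-\theta}$, using the Riesz identification $T\Sigma \cong K$ afforded by the Hermitian metric; conceptually this reflects the fact that under $T\Sigma \cong K$ the conjugate-linear map $df$ corresponds to the dualisation $f^* \colon K \to \overline K$. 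Once this bookkeeping is carried out the two double covers of $U\Sigma$ coincide canonically, so the action $L \mapsto f^*\overline L$ on theta characteristics matches the action on $H^1(U\Sigma,\mathbb{Z}_2)$ induced by $f_*$, as required.
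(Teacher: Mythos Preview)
Your proof is correct and follows essentially the same approach as the paper: both arguments identify the double cover of $U\Sigma$ associated to the theta characteristic $f^*\overline{L}$ with the pullback $f_*^*(Q)$ of the double cover $Q$ associated to $L$. The paper's proof is much terser, simply asserting that ``it is clear that the corresponding double cover of $U\Sigma$ is isomorphic to $f^*(Q)$''; your decomposition into the maps $\alpha$ and $\beta$ and the verification that they coincide is exactly the content hidden behind that clause.
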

\begin{proof}
Let $L$ be a holomorphic line bundle such that $L^2 \simeq K$. Then $L$ inherits a hermitian metric $h$ from the hermitian metric on $K$. Let $Q \to U\Sigma$ be the corresponding double cover of $U\Sigma$ and $\xi \in H^1(U\Sigma , \mathbb{Z}_2)$ the class defined by $Q$. The hermitian metric $h$ on $L$ determines a corresponding metric on $f^*(\overline{L})$ and it is clear that the corresponding double cover of $U\Sigma$ is isomorphic to $f^*(Q)$ which corresponds to $f^*(\xi) \in H^1(U\Sigma,\mathbb{Z}_2)$.
\end{proof}

From the above propositions  we conclude the existence of theta characteristics $K^{1/2}$ such that $f^*( \overline{K}^{1/2} ) \simeq K^{1/2}$.
 

\subsection{Action on the fundamental group}\label{sec:ac.fund} Since an anti-holomorphic involution of the Riemann surface  $f:\Sigma\rightarrow \Sigma$ is a homeomorphism, it induces an isomorphism
\begin{equation*}
 f_* : \pi_1(\Sigma, x_0) \to \pi_1(\Sigma, f(x_0))
\end{equation*}
where $x_0 \in \Sigma$. Given a path $\gamma$ joining $x_0$ to $f(x_0)$,   conjugating by $\gamma$ determines an isomorphism
\begin{equation*}
\phi_\gamma : \pi_1(\Sigma, f(x_0)) \to \pi_1(\Sigma, (x_0))
\end{equation*} which sends a loop $u$ based at $f(x_0)$ to the loop $\gamma . u . \gamma^{-1}$ based at $x_{0}$, where we use $.$ to denote the operation of joining paths. Hence the composition
\begin{equation*}
\hat{f} = \phi_\gamma \circ f_* : \pi_1(\Sigma , x_0) \to \pi_1(\Sigma , x_0)
\end{equation*} is an automorphism of $\pi_1(\Sigma , x_0)$. From the above, different choices of $\gamma$ change $\hat{f}$ by composition with an inner automorphism. Observe that $f(\gamma)$ is a path from $f(x_0)$ to $x_0$ so the composition $h = \gamma . f(\gamma)$ is a loop based at $x_0$. A straightforward computation shows that $\hat{f}^2$ is the inner automorphism $u \mapsto huh^{-1}$.
 
If the map $f$ has fixed points,   we can choose $x_0$ to be a fixed point and $\gamma$ to be the constant path. Then $h$ is the trivial loop and $\hat{f}$ is an involutive automorphism of $\pi_1(\Sigma)$. On the contrary, if $f$ is fixed point free then $\Sigma$ is a double cover $\pi : \Sigma \to \Sigma'$ of a non-orientable surface $\Sigma'$ and thus we get an exact sequence
\begin{equation*}
1 \to \pi_1(\Sigma) \to \pi_1(\Sigma') \to \mathbb{Z}_2 \to 1.
\end{equation*}
The image $\gamma' = \pi(\gamma)$ of $\gamma$ is a class in $\pi_1(\Sigma')$ lifting the generator of $\mathbb{Z}_2$. The automorphism $\hat{f}$ is conjugation by $\gamma'$ in the sense that $\pi_* \hat{f}(v) = \gamma' \pi_*(v) (\gamma')^{-1}.$ In particular, $\hat{f}^2$ is conjugation by $(\gamma')^2$.



\subsection{Action on principal bundles} \label{sec:indprin}

Let $P \to \Sigma$ be a principal $G_c$-bundle, for $G_{c}$ a complex connected Lie group, and $f^*(P)$ be the pullback principal bundle. 
Since $G_{c}$ is connected, the principal bundle  $P$ can be trivialized over the $1$-skeleton of $\Sigma$. 
 The obstruction to extending the trivialization to the $2$-skeleton is a cohomology class in $H^2( \Sigma , \pi_1(G_c)) \simeq \pi_{1}(G_{c})$. 
Moreover from \cite[Proposition 5.1]{ram} one has the following:

\begin{prop}\textcolor{black}{Isomorphism classes of } $G_c$-bundles are in bijection with $H^2( \Sigma , \pi_1(G_c))$.
\end{prop}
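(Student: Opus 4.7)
The plan is to establish the bijection via the obstruction-theoretic framework already set up in the paragraph preceding the statement: one trivializes a $G_c$-bundle over the $1$-skeleton of $\Sigma$, reads off the obstruction to extending the trivialization over the $2$-cell as a class in $H^2(\Sigma,\pi_1(G_c))$, and then shows this assignment is both well defined and a bijection onto the cohomology group.

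First I would fix a CW structure on $\Sigma$ with one $0$-cell, $2g$ $1$-cells, and one $2$-cell. Given a principal $G_c$-bundle $P$, connectedness of $G_c$ guarantees a trivialization over $\Sigma^{(1)}$. The transition data pulled back along the attaching map of the unique $2$-cell produces a loop in $G_c$, whose homotopy class in $\pi_1(G_c)$ is the value on the fundamental $2$-chain of a cellular cocycle $o_P\in Z^2(\Sigma,\pi_1(G_c))$. A gauge transformation $g:\Sigma^{(1)}\to G_c$ relating two such trivializations alters $o_P$ by the coboundary of a $1$-cochain; this step uses connectedness of $G_c$ in an essential way, since it allows $g$ to be deformed edge-by-edge through $G_c$. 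Hence $[o_P]\in H^2(\Sigma,\pi_1(G_c))$ depends only on the isomorphism class of $P$, producing a well-defined map $\Psi$.

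Next I would verify that $\Psi$ is a bijection. For injectivity, if two bundles have the same class, I would align their trivializations over $\Sigma^{(1)}$ and use the coboundary relation between their obstruction cocycles to glue a global isomorphism over the $2$-cell. For surjectivity, Poincar\'e duality gives $H^2(\Sigma,\pi_1(G_c))\cong\pi_1(G_c)$; to realise an element $\alpha\in\pi_1(G_c)$, represent it by a loop $\gamma:S^1\to G_c$, clutch two trivial $G_c$-bundles on the hemispheres of $S^2$ along $\gamma$, and pull back under a degree-one map $\Sigma\to S^2$ to obtain a bundle whose $\Psi$-class is $\alpha$.

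The main obstacle will be the careful cellular bookkeeping needed to verify that a change of trivialization produces only a coboundary, which is elementary but fiddly. A conceptually cleaner alternative I would present in parallel uses the classifying space: since $G_c$ is connected, $BG_c$ is simply connected with $\pi_2(BG_c)\cong\pi_1(G_c)$, so the canonical $3$-equivalence $BG_c\to K(\pi_1(G_c),2)$ induces a bijection $[\Sigma,BG_c]\xrightarrow{\sim}[\Sigma,K(\pi_1(G_c),2)]=H^2(\Sigma,\pi_1(G_c))$ because $\dim\Sigma=2$, and combining this with the standard identification $\{G_c\text{-bundles}\}/\sim\,=\,[\Sigma,BG_c]$ yields the claim.
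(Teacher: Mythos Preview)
Your proposal is correct. Note, however, that the paper does not actually supply its own proof of this proposition: it sets up the obstruction-theoretic framework in the preceding paragraph (trivialize over the $1$-skeleton, read off the obstruction in $H^2(\Sigma,\pi_1(G_c))$) and then simply cites \cite[Proposition 5.1]{ram} for the bijection. So there is no ``paper's proof'' to compare against in detail.

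Your first approach expands exactly the argument the paper gestures at, filling in the well-definedness, injectivity, and surjectivity that the paper outsources to Ramanathan. The cellular bookkeeping you flag as ``fiddly'' is genuine but routine; your sketch is accurate. Your second approach via the Postnikov map $BG_c \to K(\pi_1(G_c),2)$ is a clean and standard alternative that bypasses the cell-by-cell analysis entirely; it is arguably the more efficient route and is correct as stated (the map is a $3$-equivalence since it is an isomorphism on $\pi_i$ for $i\le 2$ and $\pi_3$ of the target vanishes, which suffices for a $2$-complex). Either argument would serve as a self-contained proof where the paper gives only a reference.
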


From the above proposition, the action of $f^*$ can be seen to be the pullback in cohomology $$f^* : H^2(\Sigma , \pi_1(G_c)) \to H^2(\Sigma , \pi_1(G_c)).$$ Moreover, since $H^2(\Sigma , \pi_1(G_c)) \simeq H^2(\Sigma,\mathbb{Z}) \otimes \pi_1(G_c)$, the induced action $f^*$ is multiplication by $-1$. Therefore a principal $G_{c}$-bundle $P \to \Sigma $ is fixed by $f$ if and only if its topological class $x \in \pi(G_{c})$ is a 2-torsion element.

 




%
%


\section{Action on representations and the fixed point set}\label{sec:action-moduli}

In this section we shall denote by $G_c$  a complex connected Lie group with Lie algebra $\mathfrak{g}$, and assume that $\mathfrak{g}$ admits an invariant symmetric non-degenerate bilinear form $B$.


\subsection{Moduli space of representations}\label{msr}

Let $\Sigma$ be a compact orientable surface of genus $g > 1$ and let $\pi = \pi_1(\Sigma , x_0)$ be the fundamental group. We denote by ${\rm Hom}(\pi , G_c)$ the set of homomorphisms $\rho : \pi \to G_c$ given the compact-open topology. Given a representation $\rho : \pi \to G_c$, composition with the adjoint representation of $G_c$ defines a representation of $\pi$ on $\mathfrak{g}$. We denote by $\mathfrak{g}_\rho$ the Lie algebra $\mathfrak{g}$ equipped with this representation.

The space ${\rm Hom}(\pi , G_c)$ has a natural action of $G_c$ by conjugation. To obtain a good quotient one restricts to the subspace ${\rm Hom}^+(\pi , G_c)$ of reductive representations. Recall that a representation $\rho : \pi \to G_c$ is called reductive if $\mathfrak{g}_\rho$ splits into a direct sum of irreducible representations.
Restricted to reductive representations the action of $G_c$ on ${\rm Hom}^+(\pi , G_c)$ is proper \cite{goldman}, and thus the quotient is a Hausdorff space. 

Let ${\rm Rep}^+( \pi , G_c)$ be the quotient space ${\rm Hom}^+(\pi , G_c)/G_c$, the moduli space of reductive $G_c$-representations of $\pi_1(\Sigma)$. In general ${\rm Rep}^+(\pi , G_c)$ has singularities but there is a dense open subset of smooth points over which ${\rm Rep}^+(\pi , G_c)$ naturally has the structure of a complex manifold. For a representation $\rho$, let $G_\rho \subseteq G_c$ be the stabilizer of $\rho$ and let $Z(G_c)$ be the centre of $G_c$. If $\rho$ is a reductive representation such that ${\rm dim}(G_\rho) = {\rm dim}(Z(G_c))$, then $\rho$ is a smooth point of ${\rm Hom}^+(\pi , G_c)$ and the dimension of this space is $(2g-1){\rm dim}(G_c) + {\rm dim}(Z(G_c))$ \cite{goldman}. This condition also ensures that the stabilizer $G_\rho / Z(G_c)$ of $\rho$ in $G_c/Z(G_c)$ is discrete, hence finite since the action is proper. It follows from general theory (see \cite{mm}) that around such points the quotient space ${\rm Rep}^+(\pi , G_c)$ has the structure of an orbifold. Moreover, if $\rho$ is a reductive representation with $G_\rho = Z(G_c)$, then the corresponding point in ${\rm Rep}^+(\pi , G_c)$ is smooth of dimension $(2g-2){\rm dim}(G_c) + 2{\rm dim}(Z(G_c))$. 

\begin{definition}
We say that a reductive representation $\rho$ is simple if $G_\rho = Z(G_c)$. In particular if the representation of $\pi$ on $\mathfrak{g}$ induced by $\rho$ is irreducible, then $\rho$ is simple.
\end{definition}

If $\rho \in {\rm Rep}^+(\pi , G_c)$ is a simple point, then the tangent space at $\rho$ is naturally given in terms of group cohomology by $H^1( \pi , \mathfrak{g}_\rho)$. The pairing $B : \mathfrak{g} \otimes \mathfrak{g} \to \mathbb{C}$ determines a symplectic pairing
\begin{equation*}
H^1( \pi , \mathfrak{g}_\rho ) \otimes H^1( \pi , \mathfrak{g}_\rho ) \to H^2( \pi , \mathbb{C} ) \simeq \mathbb{C}.
\end{equation*}
This defines a closed complex symplectic form $\Omega$ on the smooth part of ${\rm Rep}^+( \pi , G_c)$ \cite{goldman}.


\subsection{Induced action on space of representations}\label{msr2}

From Section \ref{sec:ac.fund} we have seen that given an orientation reversing involution $f : \Sigma \to \Sigma$, there is an induced map $\hat{f} : \pi \to \pi$ defined up to an inner automorphism. Accordingly there is an induced action on ${\rm Hom}(\pi , G_c)$ sending a representation $\rho$ to $\rho \circ \hat{f}$. This action preserves the subspace of reductive representations and descends through the quotient to an action $f : {\rm Rep}^+(\pi , G_c) \to {\rm Rep}^+( \pi , G_c)$. We may identify ${\rm Rep}^+(\pi , G_c)$ with the moduli space of gauge equivalence classes of flat $G_c$-connections on $\Sigma$ with reductive holonomy. Then  $f : {\rm Rep}^+(\pi , G_c) \to {\rm Rep}^+( \pi , G_c)$ corresponds to the  pullback of connections by $f : \Sigma \to \Sigma$.

\begin{prop}\label{prop:antisymplectic}
The induced map $f : {\rm Rep}^+(\pi , G_c) \to {\rm Rep}^+( \pi , G_c)$ is an involution which preserves the subspace of simple points. On the simple points $f$ is holomorphic and anti-symplectic, that is $f^* \Omega = -\Omega$.
\end{prop}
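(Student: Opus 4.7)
The plan is to verify the four assertions in order, with only the anti-symplecticity requiring a real computation.

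First I would check that $f$ is a well-defined involution. By Section~\ref{sec:ac.fund} the automorphism $\hat{f} : \pi \to \pi$ is ambiguous only up to an inner automorphism, and $\hat{f}^2$ is itself the inner automorphism $u \mapsto h u h^{-1}$. Since composing $\rho$ with an inner automorphism of $\pi$ yields a $G_c$-conjugate representation, the assignment $[\rho] \mapsto [\rho \circ \hat{f}]$ is independent of the choice of $\hat{f}$ and squares to the identity on ${\rm Rep}^+(\pi , G_c)$. Reductivity is preserved because pulling an irreducible decomposition of $\mathfrak{g}_\rho$ back along $\hat{f}$ gives an irreducible decomposition of $\mathfrak{g}_{\rho \circ \hat{f}}$. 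For preservation of simple points, observe that $g \in G_{\rho \circ \hat{f}}$ iff $g \rho(\hat{f}(\alpha)) g^{-1} = \rho(\hat{f}(\alpha))$ for all $\alpha \in \pi$; since $\hat{f}$ is a bijection of $\pi$, this forces $G_{\rho \circ \hat{f}} = G_\rho$, so the condition $G_\rho = Z(G_c)$ defining simplicity is preserved.

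For holomorphicity at simple points, I would appeal to the fact that the complex structure on ${\rm Rep}^+(\pi , G_c)$ is algebraic in nature, depending only on $G_c$ as a complex Lie group and on $\pi$ as an abstract group; it makes no reference to the complex structure on $\Sigma$. As $f$ is induced by precomposition with the group automorphism $\hat{f}$, the induced map on the character variety is algebraic, and therefore holomorphic.

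Finally, for the anti-symplectic claim, at a simple point $\rho$ the tangent space is $H^1(\pi , \mathfrak{g}_\rho)$ and the differential of $f$ is the pullback $\hat{f}^* : H^1(\pi , \mathfrak{g}_\rho) \to H^1(\pi , \mathfrak{g}_{\rho \circ \hat{f}})$. The symplectic form $\Omega$ arises as the composition
\begin{equation*}
H^1(\pi , \mathfrak{g}_\rho) \otimes H^1(\pi , \mathfrak{g}_\rho) \xrightarrow{\cup} H^2(\pi , \mathfrak{g}_\rho \otimes \mathfrak{g}_\rho) \xrightarrow{B_*} H^2(\pi , \mathbb{C}) \simeq \mathbb{C},
\end{equation*}
where the last arrow is evaluation on the fundamental class $[\Sigma] \in H_2(\pi , \mathbb{Z})$ under the identification $\Sigma = K(\pi , 1)$. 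Cup product is natural under group automorphisms and $B$ is $G_c$-invariant, so
\begin{equation*}
\Omega(\hat{f}^* \alpha , \hat{f}^* \beta) = \langle B_*(\hat{f}^*(\alpha \cup \beta)) , [\Sigma] \rangle = \langle B_*(\alpha \cup \beta) , \hat{f}_*[\Sigma] \rangle .
\end{equation*}
The orientation-reversing character of $f$ then gives $\hat{f}_*[\Sigma] = -[\Sigma]$, and hence $f^* \Omega = -\Omega$. The main obstacle, modest though it is, is precisely this sign: one must verify that all the intermediate identifications (naturality of cup product, $G_c$-invariance of $B$, $\hat{f}$ acting trivially on the coefficient $\mathbb{C}$) are genuinely natural, so that the entire sign concentrates at the evaluation against the fundamental class.
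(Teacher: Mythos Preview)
Your proof is correct and follows essentially the same approach as the paper's proof: both deduce the involution property from $\hat{f}^2$ being inner, holomorphicity from the fact that the induced action on tangent spaces $H^1(\pi,\mathfrak{g}_\rho)$ is $\mathbb{C}$-linear (you phrase this globally via algebraicity, the paper phrases it infinitesimally), and anti-symplecticity from the orientation-reversing nature of $f$. Your cup-product/fundamental-class computation is exactly the mechanism underlying the paper's one-line assertion that ``$f$ is orientation reversing from which the anti-symplectic condition $f^*\Omega = -\Omega$ follows,'' so you have simply made explicit what the paper leaves implicit.
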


\begin{proof}
Recall that the automorphism $\hat{f} : \pi \to \pi$ is such that $\hat{f}^2$ is an inner automorphism. It follows that the induced map $f : {\rm Rep}^+(\pi , G_c) \to {\rm Rep}^+(\pi , G_c)$ is an involution which  preserves the space of simple points. It is clear that $f$ acts smoothly on the space of simple points, and its differential is  the induced pullback in group cohomology $f^* : H^1( \pi , \mathfrak{g}_\rho ) \to H^1( \pi , \mathfrak{g}_{ \rho \circ \hat{f}})$. Since this is a complex linear map, then  $f : {\rm Rep}^+(\pi , G_c) \to {\rm Rep}^+( \pi , G_c)$ is holomorphic on the simple points. Moreover $f$ is orientation reversing from which the anti-symplectic condition $f^* \Omega = -\Omega$ follows.
\end{proof}


\begin{lem}\label{lem:fixedpoints}
Let $X$ be a complex manifold of dimension $n$ and $f : X \to X$ an anti-holomorphic involution. If the fixed point set of $f$ is non-empty, then it is a smooth analytic submanifold of real dimension $n$.
\end{lem}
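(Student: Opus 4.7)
The plan is to linearise $f$ around a fixed point $p$ and show that in suitable holomorphic coordinates $f$ is just complex conjugation, so that its fixed set is a copy of $\mathbb{R}^n \subset \mathbb{C}^n$. The statement is local, so I would work in a holomorphic chart $\phi : U \to \mathbb{C}^n$ sending $p$ to $0$, and transfer $f$ to an anti-holomorphic involution $\tilde f$ of a neighbourhood of $0$ with $\tilde f(0) = 0$.

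Next I would study the linear part. Since $\tilde f$ is anti-holomorphic, its $\mathbb{R}$-linear differential at $0$ has the form $z \mapsto A\bar z$ for some $A \in GL_n(\mathbb{C})$, and the involution relation $\tilde f^2 = \mathrm{id}$ gives $A\bar A = I$. I would decompose $\mathbb{C}^n = V_+ \oplus V_-$ into the $\pm 1$ real eigenspaces of $z \mapsto A\bar z$. A direct check shows multiplication by $i$ swaps $V_+$ and $V_-$, so $V_- = iV_+$ and hence $V_+$ is a totally real subspace of real dimension $n$ with $\mathbb{C}^n = V_+ \oplus_{\mathbb{R}} iV_+$. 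Choosing an $\mathbb{R}$-basis of $V_+$ as a new $\mathbb{C}$-basis of $\mathbb{C}^n$, the map $z \mapsto A\bar z$ becomes ordinary complex conjugation $z \mapsto \bar z$.

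After this linear change of coordinates, $\tilde f(z) = \bar z + O(|z|^2)$. To kill the higher order terms I would apply the classical averaging trick: define
\begin{equation*}
F(z) \;=\; \tfrac{1}{2}\bigl(z + \overline{\tilde f(z)}\bigr).
\end{equation*}
Because $\tilde f$ is anti-holomorphic, $\overline{\tilde f}$ is holomorphic, so $F$ is holomorphic. A small computation gives $dF_0 = \mathrm{id}$, so $F$ is a local biholomorphism near $0$. The key identity is
\begin{equation*}
F(\tilde f(z)) \;=\; \tfrac{1}{2}\bigl(\tilde f(z) + \overline{\tilde f(\tilde f(z))}\bigr) \;=\; \tfrac{1}{2}\bigl(\tilde f(z) + \bar z\bigr) \;=\; \overline{F(z)},
\end{equation*}
where we used $\tilde f \circ \tilde f = \mathrm{id}$. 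Hence in the new holomorphic coordinate $w = F(z)$, the involution $\tilde f$ conjugates to $w \mapsto \bar w$.

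In these coordinates the fixed set is $\{w \in \mathbb{C}^n : w = \bar w\} \simeq \mathbb{R}^n$, a real analytic submanifold of real dimension $n$; patching over fixed points gives the result. The only delicate step is the linear normalisation, which hinges on the identity $V_- = iV_+$; once this is in place the averaging argument is purely formal. I expect no serious obstacle here, the proof being a standard application of the Bochner-type linearisation for finite group actions adapted to the anti-holomorphic setting.
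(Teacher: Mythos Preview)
Your proof is correct and takes a genuinely different route from the paper. The paper's argument is Riemannian: it averages a metric to make it $f$-invariant, then uses the exponential map at $p$ to transport the action to the tangent space, where $f$ becomes its own anti-linear differential $C = f_*(p)$; the fixed set of $C$ on $T_pX$ is a real $n$-dimensional subspace, and the exponential map carries this to the local fixed set of $f$. Your argument is purely complex-analytic: after normalising the linear part to $z \mapsto \bar z$, you holomorphically average via $F(z) = \tfrac12\bigl(z + \overline{\tilde f(z)}\bigr)$ to produce a biholomorphism conjugating $\tilde f$ to standard complex conjugation.

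Your approach in fact delivers a slightly sharper conclusion: you obtain \emph{holomorphic} coordinates in which $f$ is literally $w \mapsto \bar w$, not just a chart in which $f$ is linearised. This is cleaner from the complex-analytic viewpoint and sidesteps the paper's appeal to an analytic $f$-invariant metric, which is invoked somewhat breezily. The paper's exponential-map argument, on the other hand, is the standard Bochner--Cartan linearisation template and generalises verbatim to arbitrary smooth isometric involutions. Both arguments ultimately rest on the same linear-algebra fact you isolate, namely that an anti-linear involution on $\mathbb{C}^n$ has a totally real fixed subspace of real dimension $n$.
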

\begin{proof}
Let $p$ be a fixed point of $f$ and let $g$ be a Riemannian metric on $X$. Replacing $g$ by $g + f^*(g)$ we may assume that $g$ is $f$-invariant. We can also assume that $g$ is analytic near $p$. Let $e : T_pX \to X$ be the exponential mapping at $p$, and  $C = f_*(p) : T_pX \to T_pX$ the differential of $f$ at $p$. Then $C$ is an anti-linear involution on $T_pX$, and since $g$ is $f$-invariant we have $e( C(v)) = f(e(v))$ for any $v \in T_pX$. Thus a neighborhood of the origin in $T_pX$ defines local analytic coordinates near $p$ such that $f$ corresponds to the anti-linear involution $C$. The fixed point set of $f$ near $p$ is thus a smooth analytic submanifold of real dimension $n$.
\end{proof}

Let $\mathcal{L}_{G_c}$ denote the fixed point set of the involution $f : {\rm Rep}^+(\pi , G_c) \to {\rm Rep}^+(\pi , G_c)$. From Proposition \ref{prop:antisymplectic} and Lemma \ref{lem:fixedpoints} we obtain:
\begin{prop}\label{prop:complexlag}
If non-empty, the set of simple points of $\mathcal{L}_{G_c}$ is a smooth complex Lagrangian submanifold of the simple points of ${\rm Rep}^+(\pi,G_c)$.
\end{prop}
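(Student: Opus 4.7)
The plan is to apply Proposition \ref{prop:antisymplectic}, which provides that $f$ acts as a holomorphic, anti-symplectic involution on the open subset $U\subset {\rm Rep}^+(\pi,G_c)$ of simple points, and to combine this with a local holomorphic linearization of $f$ at each fixed point. The two tasks to dispatch are (i) that the fixed locus carries the structure of a smooth complex submanifold, and (ii) that its tangent space at each point is a Lagrangian subspace of $(T_pU,\Omega)$.

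For (i), I would fix a simple point $p\in\mathcal{L}_{G_c}$ and invoke the holomorphic linearization theorem for finite groups of biholomorphisms (Cartan/Bochner averaging): since $\langle f\rangle\cong\mathbb{Z}/2$ is finite, there exist local holomorphic coordinates centred at $p$ in which $f$ coincides with its derivative $C=f_*(p)$. As $C$ is a $\mathbb{C}$-linear involution of $T_pU$, we get the eigenspace decomposition $T_pU = T_p^+\oplus T_p^-$ into complex subspaces on which $C$ acts as $\pm 1$. In these coordinates the fixed locus of $f$ is identified with the complex subspace $T_p^+$, so $\mathcal{L}_{G_c}\cap U$ is a complex submanifold near $p$ of complex dimension $\dim_{\mathbb{C}} T_p^+$.

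For (ii), the anti-symplectic identity $f^*\Omega=-\Omega$ from Proposition \ref{prop:antisymplectic} gives, for $u,v\in T_p^+$,
\[
\Omega(u,v)=\Omega(f_*u,f_*v)=(f^*\Omega)(u,v)=-\Omega(u,v),
\]
so $\Omega$ vanishes on $T_p^+$, and the identical calculation shows it vanishes on $T_p^-$. Non-degeneracy of $\Omega$ together with $T_pU=T_p^+\oplus T_p^-$ then forces both isotropic summands to be maximal, hence Lagrangian, which in particular yields $\dim_{\mathbb{C}}T_p^+=\tfrac12\dim_{\mathbb{C}}T_pU$. Combined with (i), this shows that the simple locus of $\mathcal{L}_{G_c}$ is a smooth complex Lagrangian submanifold of $U$.

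The principal obstacle is step (i): one needs genuine holomorphic linearizing coordinates in order to conclude that the fixed locus is a complex submanifold. The companion Lemma \ref{lem:fixedpoints}, which relies on an $f$-invariant Riemannian metric and the exponential map, is tailored to the anti-holomorphic setting and only produces a smooth totally real submanifold of half the real dimension; it is the Cartan/Bochner averaging trick that upgrades this, in the present holomorphic anti-symplectic situation, to a complex Lagrangian structure.
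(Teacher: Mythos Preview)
Your argument is correct. In fact it is more complete than what the paper offers at this point: the paper's ``proof'' of Proposition~\ref{prop:complexlag} is the single sentence that it follows from Proposition~\ref{prop:antisymplectic} and Lemma~\ref{lem:fixedpoints}, and you have rightly flagged that Lemma~\ref{lem:fixedpoints} is stated for \emph{anti}-holomorphic involutions whereas here $f$ is holomorphic on the simple locus. Your use of Cartan/Bochner linearisation for a holomorphic $\mathbb{Z}/2$-action is exactly the correct substitute, and your dimension count via the isotropy of both eigenspaces $T_p^{\pm}$ with respect to $\Omega$ is the clean way to see that the fixed locus is not merely isotropic but Lagrangian.

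The paper's genuine justification appears only later, in Proposition~\ref{prop:LJ2}, after the identification of ${\rm Rep}^+(\pi,G_c)$ with the hyperk\"ahler moduli space $\mathcal{M}_{G_c}$: there one knows that $f$ is anti-holomorphic with respect to $I$, so Lemma~\ref{lem:fixedpoints} applies verbatim to give real half-dimensionality, and then the vanishing of $\omega_I,\omega_K$ forces $\Omega_J=\omega_K+i\omega_I$ to vanish on $\mathcal{L}_{G_c}$. That route is shorter once the hyperk\"ahler package is in hand, but it imports structure that is not available in Section~\ref{sec:action-moduli}. Your argument, by contrast, is intrinsic to the complex symplectic geometry of $({\rm Rep}^+(\pi,G_c),\Omega)$ and would apply to any holomorphic anti-symplectic involution on any complex symplectic manifold, without needing an auxiliary complex structure in which the involution becomes anti-holomorphic.
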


Through the theory of spectral curves in Section \ref{sec:invo-spec}, we show that $\mathcal{L}_{G_c}$ does indeed contain smooth points for $G_c = GL(n,\mathbb{C})$, and for $G_{c}$ a classical complex semi-simple Lie group.


\section{Moduli space of $G_{c}$-Higgs bundles}\label{sec:inro-Higgs}


A Higgs bundle on a Riemann surface $\Sigma$ is a pair $(E,\Phi)$, where $E$ is a holomorphic vector bundle on $\Sigma$ and $\Phi$ is a holomorphic section of ${\rm End}(E) \otimes K$. More generally for a complex Lie group $G_{c}$ we define a $G_{c}$-Higgs bundle to be a pair $(P,\Phi)$ where $P$ is a holomorphic principal $G_c$-bundle with adjoint bundle ${\rm ad}(P)$ and $\Phi$ is a holomorphic section of ${\rm ad}(P) \otimes K$. In the following subsections we shall study the geometry of the moduli space of $G_{c}$-Higgs bundles, which is closely related to the moduli space of surface group representations into $G_{c}$.

\subsection{Higgs bundles and the Hitchin equations}\label{sec:higgs}

For simplicity assume $G_c$ is connected and semisimple.
We shall denote by $\mathfrak{g}_c$ the Lie algebra of $G_c$, and $\mathfrak{g}$ the Lie algebra of $G$.
Given  a choice of principal $G_c$-bundle $P$ with reduction of structure to the compact real form $G$ of $G_{c}$,  the Killing form $k(x,y)$ on $\mathfrak{g}_c$ naturally defines a bilinear form on the adjoint bundle ${\rm ad}(P)$ which we will also denote by $k$. A reduction of structure of $P$ to $G$ amounts to equipping the adjoint bundle ${\rm ad}(P)$ with an anti-linear involution $\rho : {\rm ad}(P) \to {\rm ad}(P)$ such that taking the hermitian adjoint $x^*$ of a section $x$ of ${\rm ad}(P)$ is given by $x^* = -\rho(x)$. The associated hermitian form $h$ is given by $h(x,y) = k(x^*,y) = -k(\rho(x),y).$ 
Note that  $\overline{ k(x,y) } = k(x^*,y^*)$.

Consider pairs $(\overline{\partial}_A , \Phi)$, where $\overline{\partial}_A$ denotes a $\overline{\partial}$-connection on $P$ defining a holomorphic structure, and $\Phi$ is a section of $\Omega^{1,0}(\Sigma , {\rm ad}(P))$. Note that if $\overline{\partial}_A \Phi = 0$ then $(\overline{\partial}_A , \Phi)$ defines a $G_c$-Higgs bundle on $P$. From \cite{N1}, the Hitchin equations for a pair $(\overline{\partial}_A , \Phi)$ are
\begin{equation}
\begin{aligned}
\overline{\partial}_A \Phi &= 0, \\
F_A + [\Phi , \Phi^*] &= 0,
\end{aligned}\label{Heq}
\end{equation}
where $F_A$ is the curvatuve of the unitary connection $\nabla_A = \partial_A + \overline{\partial}_A$ associated to $\overline{\partial}_A$.
Two solutions to the Hitchin equations on $P$ are considered equivalent if they are related by a $G$-valued gauge transform. We let $\mathcal{M}_{G_c}(P)$ denote the moduli space of gauge equivalence classes of solutions to the Hitchin equations on $P$, and $\mathcal{M}_{G_c}$ the union of the $\mathcal{M}_{G_c}(P)$ as $P$ ranges over the set of isomorphism classes of principal $G_c$-bundles.

A solution $(\overline{\partial}_A , \Phi)$ to the Hitchin equations determines a flat $G_c$-connection $\nabla = \nabla_A + \Phi + \Phi^*$. Using results of Donaldson \cite{donald} and Corlette \cite{cor},   the assignment $(\overline{\partial}_A , \Phi) \mapsto \nabla$ gives an isomorphism between the Higgs bundle moduli space $\mathcal{M}_{G_c}$ of equivalence classes of solutions to the Hitchin equations and the moduli space ${\rm Rep}^+( \pi_1(\Sigma) , G_c)$ of reductive representations of $\pi_1(\Sigma)$ in $G_c$. In particular every flat $G_c$-connection $\nabla$ with reductive holonomy representation admits a decomposition $\nabla = \nabla_A + \Phi + \Phi^*$ associated to a solution $(\overline{\partial}_A , \Phi )$ of the Hitchin equations.

The slope of a holomorphic vector bundle $E \to \Sigma$ is defined as $\mu(E) := {\rm deg}(E) / {\rm rank}(E)$. We say that a Higgs bundle $(E,\Phi)$ is semi-stable if for each proper, non-zero subbundle $F \subset E$ which is $\Phi$-invariant we have $\mu(F) \le \mu(E)$. If this inequality is always strict then $(E,\Phi)$ is said to be stable. Finally, the Higgs bundle $(E,\Phi)$ is poly-stable if it is a sum of stable Higgs bundles of the same slope. It is possible to adapt these definitions to the case of principal $G_c$-Higgs bundles for a complex semisimple Lie group $G_c$ (e.g., see \cite[Section 3]{biswas}).

One may define a moduli space of semi-stable $G_c$-Higgs bundles $\CM_{G_c}^{{\rm Higgs}}$, and by a fundamental result of Hitchin \cite{N1} and Simpson \cite{simpson88} there is an isomorphism between $\CM_{G_c}^{{\rm Higgs}}$ and the moduli space $\mathcal{M}_{G_c}$ of solutions to the Hitchin equations, when $G_c$ is semi-simple. The key result used to establish this is that a $G_c$-Higgs bundle $(\overline{\partial}_A,\Phi)$ is gauge equivalent to a solution of the Hitchin equations if and only if it is poly-stable. A similar isomorphism exists when the group is $G_c = GL(n,\mathbb{C})$, except that in this case $\CM_{GL(n,\mathbb{C})}$ corresponds to the subspace $\CM_{GL(n,\mathbb{C})}^{{\rm Higgs},0} \subset \CM_{G_c}^{{\rm Higgs}}$ of Higgs bundles of degree $0$.


\subsection{Geometry of the Higgs bundle moduli space}

The moduli space $\mathcal{M}_{G_c}$ of $G_{c}$-Higgs bundles, for $G_{c}$ a complex semisimple Lie group,  is a  hyperk\"ahler manifold with singularities obtained by taking a hyperk\"ahler quotient of the infinite dimensional space of pairs of complex structures and Higgs fields (\cite{N1}, \cite{simpson88}).

Recall that a pair $(\overline{\partial}_A , \Phi)$ consists of a holomorphic structure $\overline{\partial}_A$ on the principal bundle $P$, and a section $\Phi \in \Omega^{1,0}(\Sigma , {\rm ad}(P))$. The space of all pairs $( \overline{\partial}_A , \Phi)$  on $P$ is an infinite dimensional manifold and an affine space modelled on $\Omega^{0,1}(\Sigma , {\rm ad}(P) ) \oplus \Omega^{1,0}(\Sigma , {\rm ad}(P))$. We write $(\Psi_1 , \Phi_1) , (\Psi_2 , \Phi_2) , \dots $ for tangent vectors to this space. Furthermore, given a pair $( \overline{\partial}_A , \Phi)$ we write $\nabla_A$ for the unitary connection corresponding to $\overline{\partial}_A$, and $F_A$ its curvature. Explicitly $\nabla_A = \overline{\partial}_A + \partial_A$ where $\partial_A = \rho \circ \overline{\partial}_A \circ \rho$.
The metric on this infinite dimensional space is given by
\begin{equation}
g( (\Psi_1,\Phi_1) , (\Psi_1 , \Phi_1) ) = 2i \int_{\Sigma} k( \Psi_1^* , \Psi_1) - k( \Phi_1^* , \Phi_1).\label{eq.met}
\end{equation}
There are compatible complex structures $I,J,K$ satisfying the quaternionic relations given by
\begin{eqnarray}
 I (\Psi_1,\Phi_1) &=& (i\Psi_1,i\Phi_1); \nonumber\\
J (\Psi_1,\Phi_1) &= &(i\Phi_1^*,-i\Psi_1^*);\label{quat.eq} \\
K (\Psi_1,\Phi_1) &=& (-\Phi_1^*,\Psi_1^*).\nonumber
 \end{eqnarray}
We shall denote by $\omega_I,\omega_J,\omega_K$ the corresponding K\"ahler forms defined by 
\begin{eqnarray*}\omega_I(X,Y) := g(IX,Y)~,~\omega_J(X,Y) := g(JX,Y)~,~ \omega_K(X,Y) := g(KX,Y). \end{eqnarray*}
The induced complex symplectic forms $\Omega_I,\Omega_J,\Omega_K$  are given by 
\begin{eqnarray*}\Omega_I = \omega_J + i \omega_K~,~ \Omega_J = \omega_K + i\omega_I~, ~\Omega_K = \omega_I + i\omega_J.\end{eqnarray*}
\begin{ex}
For example $\Omega_J$ is given by:
\begin{equation*}
\Omega_J( (\Psi_1,\Phi_1) , (\Psi_2,\Phi_2) ) = -i\int_\Sigma k( \Psi_1 - \Psi_1^* + \Phi_1 + \Phi_1^* , \Psi_2 - \Psi_2^* + \Phi_2 + \Phi_2^*).
\end{equation*}
After taking the  hyperk\"ahler quotient this agrees up to the factor of $-i$ with the symplectic form $\Omega$ defined in Section \ref{msr}, that is $\Omega_J = -i \Omega$ on $\mathcal{M}_{G_c}$.
\end{ex}

To get the moduli space $\mathcal{M}_{G_c}$, as seen in \cite{N1} one interprets the Hitchin equations (\ref{Heq}) as the moment map equations for the action of the unitary gauge group and takes the hyperk\"ahler quotient.
Given $( \overline{\partial}_A , \Phi)$ a solution of the Hitchin equations (\ref{Heq}), the tangent space $T_{( \overline{\partial}_A , \Phi)} \mathcal{M}_{G_c}$ of the moduli space at a smooth point $( \overline{\partial}_A , \Phi)$ can be described as equivalence classes of deformations $(\Psi_1 , \Phi_1) $ in $\Omega^{0,1}(\Sigma , {\rm ad}(P) ) \oplus \Omega^{1,0}(\Sigma , {\rm ad}(P))$ satisfying:
\begin{equation*}
\begin{aligned}
\overline{\partial}_A \Phi_1 + [ \Psi_1 , \Phi] &= 0, \\
\partial_A \Psi_1 - \overline{\partial}_A \Psi_1^* + [\Phi , \Phi_1^*] + [\Phi_1 , \Phi^*] &= 0.
\end{aligned}
\end{equation*}
Two deformations are equivalent if they are related by an infinitesimal unitary gauge transformation. Thus $(\Psi_2 , \Phi_2)$ is equivalent to $(\Psi_1 , \Phi_1)$ if there is a skew-adjoint section $\psi \in \Omega^0(\Sigma , {\rm ad}(P))$ of the adjoint bundle such that
\begin{equation*}
\begin{aligned}
\Psi_2 &= \Psi_1 + \overline{\partial}_A \psi, \\
\Phi_2 &= \Phi_1 + [\Phi , \psi].
\end{aligned}
\end{equation*}

\section{Higgs bundles and the $(A,B,A)$-brane $\CL_{G_{c}}$}\label{sec:aba}

Given $f: \Sigma \to \Sigma$ an anti-holomorphic involution on $\Sigma$, we saw in Section \ref{sec:action-moduli} that $f$ induces an involution on ${\rm Rep}^+(\pi_1(\Sigma) , G_c)$ with fixed point set $\mathcal{L}_{G_c}$. By identifying ${\rm Rep}^+(\pi_1(\Sigma) , G_c)$ with $\mathcal{M}_{G_c}$ we obtain an involution on the moduli space of $G_c$-Higgs bundles. From this perspective we can interpret the fixed point set $\mathcal{L}_{G_c}$ in terms of $B$-branes and $A$-branes following \cite{Kap}.

Let $P$ be a principal $G_c$-bundle over $\Sigma$ and fix a reduction of structure of $P$ to $G$. For $x \in \mathcal{M}_{G_c}$ a point in the moduli space represented by a $G_c$-Higgs bundle pair $(\overline{\partial}_A , \Phi)$ on $P$, applying a gauge transform we may assume that $(\overline{\partial}_A , \Phi)$ satisfies the Hitchin equations. In particular the flat connection $\nabla$ corresponding to $(\overline{\partial}_A , \Phi)$ is given by $\nabla = \partial_A + \overline{\partial}_A + \Phi + \Phi^*$. The involution $f$ acts on this flat connection by pullback, so we obtain $f^* \nabla = f^*(\partial_A) + f^*(\overline{\partial}_A) + f^*(\Phi) + f^*(\Phi^*)$. Therefore $(f^*(\partial_A) , f^*(\Phi^*))$ is a $G_c$-Higgs bundle on $f^*(P)$ which satisfies the Hitchin equations. The flat connection associated to the pair $(f^*(\partial_A) , f^*(\Phi^*))$ is $f^*\nabla$, so we have found a Higgs bundle pair $(f^*(\partial_A) , f^*(\Phi^*))$ representing $f(x) \in \mathcal{M}_{G_c}$.

\begin{prop}\label{ind.inv}\label{prop:LJ1}
The induced involution $f : \mathcal{M}_{G_c} \to \mathcal{M}_{G_c}$ on the moduli space of solutions $(\overline{\partial}_A , \Phi )$ to the $G_{c}$-Hitchin equations (\ref{Heq}) is holomorphic with respect to the complex structure $J$ and anti-holomorphic with respect to the complex structures $I$ and $K$. Moreover $f $ is an isometry with respect to the hyperk\"ahler metric.
\end{prop}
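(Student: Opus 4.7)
The plan is to work directly with representatives of points of $\mathcal{M}_{G_c}$ as solutions $(\overline{\partial}_A,\Phi)$ of the Hitchin equations (\ref{Heq}), using the identification from the preceding paragraph: the induced map sends such a solution to the pair $(f^*(\partial_A),\, f^*(\Phi^*))$. My first task is to fix an isomorphism $f^*P \simeq P$ that intertwines the reductions of structure to the compact form $G$, which exists because the topological classes fixed by $f$ are $2$-torsion in $\pi_1(G_c)$, as established in Section \ref{sec:indprin}. This ensures that the antilinear involution $\rho$ defining the compact reduction, and hence the hermitian adjoint $*$ on ${\rm ad}(P)$-valued forms, commutes with $f^*$.

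Next I would compute the differential at $(\overline{\partial}_A,\Phi)$. Perturbing by a tangent vector $(\Psi_1,\Phi_1) \in \Omega^{0,1}(\Sigma,{\rm ad}(P)) \oplus \Omega^{1,0}(\Sigma,{\rm ad}(P))$ and recording how $\partial_A$ and $\Phi^*$ vary so as to respect the compact reduction, the pulled-back pair yields
\begin{equation*}
df(\Psi_1,\Phi_1) \;=\; \bigl(-f^*(\Psi_1^*),\; f^*(\Phi_1^*)\bigr).
\end{equation*}
With this formula, the assertion on complex structures reduces to routine manipulation of (\ref{quat.eq}) using three identities: the involutivity $(\cdot)^{**}=\mathrm{id}$, the antilinearity $(c\alpha)^* = \bar{c}\,\alpha^*$, and the commutation $f^*\circ{*} = {*}\circ f^*$. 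Direct substitution then verifies $J\circ df = df\circ J$ (both sides equal $(if^*(\Phi_1),\, if^*(\Psi_1))$) and $I\circ df = -df\circ I$; since $K=IJ$ the relation $K\circ df = -df\circ K$ follows immediately. For the isometry claim I would plug $df(X)$ into (\ref{eq.met}) and track two sign flips that must cancel: the antisymmetry of the wedge of two $1$-forms gives $k(\Psi_1,\Psi_1^*) = -k(\Psi_1^*,\Psi_1)$, while the orientation-reversing property of $f$ gives $\int_\Sigma f^*\omega = -\int_\Sigma \omega$ on $2$-forms. Their product is $+1$, so each summand of $g$ is individually preserved.

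The step I expect to be the main obstacle is the initial choice of identification $f^*P \simeq P$ intertwining the compact reductions. The existence of a reduction of $f^*P$ is automatic, but arranging the principal-bundle isomorphism with $P$ so that it respects these reductions (not merely the topology) is a geometric rather than topological matter, and it is essential because every subsequent calculation relies on the commutation $f^*\circ{*} = {*}\circ f^*$. Once this is in place, the three algebraic identities propagate cleanly through both the complex-structure and metric computations, and each remaining step is a short verification.
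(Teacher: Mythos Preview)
Your core computation is exactly the paper's proof: the differential formula $df(\Psi_1,\Phi_1)=(-f^*(\Psi_1^*),f^*(\Phi_1^*))$, followed by a direct check against (\ref{quat.eq}) and (\ref{eq.met}). The explicit tracking of the two sign flips for the isometry is a nice elaboration of what the paper leaves implicit.

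The one place you diverge is the preliminary step of choosing an isomorphism $f^*P\simeq P$ compatible with the $G$-reductions, which you flag as the main obstacle. This step is unnecessary, and the paper does not take it. The induced map on the moduli space sends a solution on $P$ to a solution on the pulled-back bundle $f^*P$; since $\mathcal{M}_{G_c}$ is the union over all topological types, there is no need to return to the same $P$. The bundle $f^*P$ inherits a $G$-reduction tautologically (pull back the reduction on $P$), and with respect to that reduction the adjoint on ${\rm ad}(f^*P)$-valued forms satisfies $(f^*\alpha)^* = f^*(\alpha^*)$ by construction. So the identity $f^*\circ * = *\circ f^*$ that drives your verification holds automatically, without any choice of bundle isomorphism. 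Your appeal to Section \ref{sec:indprin} and the $2$-torsion condition is relevant only for locating \emph{fixed points} of the involution, not for defining the involution or computing its differential; dropping that step removes the only obstacle you identified.
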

\begin{proof}
As seen above, the action of $f$ on $\mathcal{M}_{G_c}$ is given by sending a solution $(\overline{\partial}_A , \Phi)$ of the Hitchin equations to the pair $(f^*(\partial_A) , f^*(\Phi^*))$. The differential of $f$ at $(\overline{\partial}_A , \Phi )$ sends a deformation $(\Psi_1 , \Phi_1)$ of $(\overline{\partial}_A , \Phi )$ to a corresponding deformation of $(f^*(\partial_A) , f^*(\Phi^*))$ given by $( -f^*( \Psi_1^*) , f^*(\Phi_1^*))$. From this and (\ref{quat.eq}) we see that $f$ is anti-holomorphic with respect to $I,K$ and holomorphic with respect to $J$. Similarly, it follows from  (\ref{eq.met}) that $f$ is an isometry.
\end{proof}

\begin{prop}\label{prop:LJ}\label{prop:LJ2}
The fixed point set $\mathcal{L}_{G_{c}}$ of $f$ on the moduli space of solutions $(\overline{\partial}_A , \Phi )$ to the $G_{c}$-Hitchin equations (\ref{Heq}) meets the smooth points in a complex Lagrangian submanifold with respect to $J,\Omega_J$.
\end{prop}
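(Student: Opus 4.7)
The plan is to verify at each smooth fixed point $p$ three properties: that $\mathcal{L}_{G_c}$ is a complex analytic submanifold near $p$ with respect to $J$; that the holomorphic symplectic form $\Omega_J$ vanishes when restricted to $\mathcal{L}_{G_c}$; and that the resulting complex $J$-dimension is exactly half that of $\mathcal{M}_{G_c}$.

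For the first property, I would exploit Proposition \ref{prop:LJ1}: since $f$ is holomorphic for $J$, the standard linearisation argument for fixed sets of holomorphic involutions (diagonalise $df_p$ in $J$-holomorphic coordinates and integrate back via a $J$-holomorphic exponential map for an $f$-averaged K\"ahler metric) produces local complex coordinates in which $\mathcal{L}_{G_c}$ appears as the vanishing locus of a linear subset of the coordinates. For the vanishing of $\Omega_J$, the key identity to establish is $f^*\Omega_J = -\Omega_J$. The quickest route is to invoke Proposition \ref{prop:antisymplectic} together with the identification $\Omega_J = -i\Omega$ noted in Section \ref{sec:inro-Higgs}. A direct alternative is to combine the isometry property with the anti-holomorphicity of $f$ for $I$ and $K$ (both in Proposition \ref{prop:LJ1}): since $f_* I = -I f_*$ and $g$ is $f$-invariant,
\begin{equation*}
f^*\omega_I(X,Y) = g(I f_*X, f_*Y) = -g(f_* I X, f_*Y) = -g(I X, Y) = -\omega_I(X,Y),
\end{equation*}
and the analogous identity for $\omega_K$ gives $f^*\Omega_J = -\Omega_J$. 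For tangent vectors $X,Y \in T_p \mathcal{L}_{G_c}$ we have $f_*X = X$ and $f_*Y = Y$, so $\Omega_J(X,Y) = (f^*\Omega_J)(X,Y) = -\Omega_J(X,Y)$, forcing $\Omega_J|_{\mathcal{L}_{G_c}} \equiv 0$.

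To get the dimension count, I would apply Lemma \ref{lem:fixedpoints} by viewing $f$ as an anti-holomorphic involution with respect to $I$, which yields $\dim_{\R} \mathcal{L}_{G_c} = \dim_{\C} \mathcal{M}_{G_c}$ at smooth fixed points. Combined with the first step, the $J$-complex dimension of $\mathcal{L}_{G_c}$ is then half the $J$-complex dimension of $\mathcal{M}_{G_c}$, which is precisely the Lagrangian condition. I expect the main bookkeeping subtlety to be ensuring that \emph{smooth point} is interpreted compatibly when passing between $\mathcal{M}_{G_c}$ and $\mathrm{Rep}^+(\pi_1(\Sigma), G_c)$; since the Donaldson--Corlette correspondence provides a real-analytic (and $J$-biholomorphic) identification on the simple locus, this transfer is routine, and indeed the same line of reasoning already delivered Proposition \ref{prop:complexlag} on the representation-theoretic side, so the present statement can alternatively be read off from that proposition by transporting through the correspondence.
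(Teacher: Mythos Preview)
Your proposal is correct and follows essentially the same approach as the paper: use anti-holomorphicity of $f$ with respect to $I$ (via Lemma \ref{lem:fixedpoints}) to get mid-dimensionality, holomorphicity with respect to $J$ to get that $\mathcal{L}_{G_c}$ is a $J$-complex submanifold, and the identities $f^*\omega_I=-\omega_I$, $f^*\omega_K=-\omega_K$ (which the paper states without the explicit computation you give) to conclude $\Omega_J|_{\mathcal{L}_{G_c}}=0$. Your write-up is simply more explicit than the paper's, and your remark that the result can alternatively be transported from Proposition \ref{prop:complexlag} via the Donaldson--Corlette correspondence is a valid shortcut the paper does not spell out.
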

\begin{proof}Since $f$ is anti-holomorphic in $I$, we know immediately that its fixed point set $\mathcal{L}_{G_c}$ is a mid-dimensional submanifold of $\mathcal{M}_{G_c}$. Clearly $\mathcal{L}_{G_c}$ is a complex submanifold with respect to $J$, and the symplectic forms $\omega_I,\omega_K$ must vanish on $\mathcal{L}_{G_c}$. In particular this means the complex symplectic form $\Omega_J = \omega_K + i\omega_I$ vanishes on $\mathcal{L}_{G_c}$. Thus $\mathcal{L}_{G_c}$ is a complex Lagrangian submanifold with respect to $J$. \end{proof}

Following \cite{Kap}, we say that $\mathcal{L}_{G_c}$ is an $(A,B,A)$-brane with respect to the complex structures $I,J$ and $K$, and thus have the following:
 
 \begin{theorem}\label{teo1}
 For each choice of anti-holomorphic involution $f$ on a compact Riemann surface, there is a natural $(A,B,A)$-brane $\CL_{G_{c}}$ defined in the moduli space of $G_{c}$-Higgs bundles. 
 \end{theorem}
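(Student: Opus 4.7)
The plan is to assemble the theorem directly from the two preceding propositions, which together supply exactly the three conditions defining an $(A,B,A)$-brane. The substantive content has been packaged into Propositions~\ref{prop:LJ1} and~\ref{prop:LJ2}, so the task for the theorem itself is to check that their conclusions match the Kapustin--Witten definition of $(A,B,A)$-brane with respect to the hyperkähler triple $(I,J,K)$ on $\mathcal{M}_{G_c}$.

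First, I would invoke the non-abelian Hodge correspondence of Donaldson--Corlette and Hitchin--Simpson to transport the involution on ${\rm Rep}^+(\pi_1(\Sigma), G_c)$, constructed in Section~\ref{sec:action-moduli}, to a well-defined involution on $\mathcal{M}_{G_c}$. Concretely, a solution $(\overline{\partial}_A, \Phi)$ of the Hitchin equations is sent to $(f^*(\partial_A), f^*(\Phi^*))$, and one checks (as was already done in the paragraph preceding Proposition~\ref{ind.inv}) that this pair again solves the Hitchin equations and defines the same flat connection as $f^* \nabla$. This identifies the involution on $\mathcal{M}_{G_c}$ coming from the Higgs bundle picture with the one coming from representations.

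Next, I would appeal to Proposition~\ref{ind.inv}, which computes the differential of $f$ on the deformation space $\Omega^{0,1}(\Sigma, {\rm ad}(P)) \oplus \Omega^{1,0}(\Sigma, {\rm ad}(P))$ and compares it to the quaternionic formulas (\ref{quat.eq}). The outcome is that $f$ anti-commutes with $I$ and $K$ and commutes with $J$, while preserving the metric~(\ref{eq.met}). Combined with Lemma~\ref{lem:fixedpoints} applied in the complex structure $I$, this shows that at smooth points $\mathcal{L}_{G_c}$ is a mid-dimensional totally real submanifold for $I$, a complex submanifold for $J$, and a totally real submanifold for $K$. The symplectic forms $\omega_I$ and $\omega_K$ therefore restrict to zero on $\mathcal{L}_{G_c}$, so $\mathcal{L}_{G_c}$ is Lagrangian for both: this is Proposition~\ref{prop:LJ}.

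The final step is purely definitional: a submanifold which is Lagrangian for $\omega_I$, complex for $J$, and Lagrangian for $\omega_K$ is exactly what Kapustin--Witten~\cite{Kap} call an $(A,B,A)$-brane, so the theorem is proved. The only step requiring genuine care is the interaction between the hermitian adjoint $^*$ and pullback by $f$ in the differential computation, since $^*$ depends on the reduction of $P$ to the compact form $G$; this is where one implicitly uses that the compact reduction can be chosen $f$-invariant (the averaging $g + f^*g$ argument of Lemma~\ref{lem:fixedpoints} being the analogue). Once that point is handled, everything else is a direct verification of signs from~(\ref{quat.eq}) and~(\ref{eq.met}).
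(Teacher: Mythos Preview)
Your proposal is correct and follows essentially the same route as the paper: the theorem is stated immediately after Propositions~\ref{prop:LJ1} and~\ref{prop:LJ2} with no separate proof, the sentence ``Following \cite{Kap}, we say that $\mathcal{L}_{G_c}$ is an $(A,B,A)$-brane\dots and thus have the following'' being the entire argument. Your outline assembles precisely these two propositions and the definitional step, matching the paper's logic; your remark about the $f$-invariance of the compact reduction is a point the paper does not make explicit, so you are if anything slightly more careful.
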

 
In subsequent sections we shall describe how the brane $\CL_{G_{c}}$ lies with respect to the Hitchin fibration for the moduli space $\CM_{G_{c}}$ (Section \ref{sec:hitFib}), consider the spectral data for $G_{c}$-Higgs bundle to show that $\CL_{G_{c}}$ is non-empty (Section \ref{sec:invo-spec}), and study connectivity of $\CL_{G_{c}}$ (Section \ref{sec:con}).

\section{The Hitchin fibration and $\CL_{G_{c}}$}\label{sec:hitFib}


Let $R^* = \mathbb{C}[\mathfrak{g}_c^*]^{G_c}$ be the graded $\mathbb{C}$-algebra of invariant polynomials on $\mathfrak{g}_c$ and ${\rm Sym}^*$ the graded $\mathbb{C}$-algebra with ${\rm Sym}^j = H^0(\Sigma , K^j)$. Let $\mathcal{A}_{G_c} = Hom( R^* , {\rm Sym}^* )$ be the space of graded $\mathbb{C}$-algebra homomorphisms from $R^*$ to ${\rm Sym}^*$. Choosing a homogeneous basis of generators $p_1,p_2, \dots p_l$ for $R^*$, we have $R^* \simeq \mathbb{C}[p_1, \dots , p_l]$ and hence a non-canonical isomorphism
\begin{equation*}
\mathcal{A}_{G_c} = \bigoplus_{i=1}^l H^0( \Sigma , K^{d_i}),
\end{equation*}
where $d_i$ is the degree of $p_i$. From \cite{Hit92}, we consider the Hitchin fibration $h : \mathcal{M}_{G_c} \to \mathcal{A}_{G_c}$, which assigns to a Higgs bundle $(P,\Phi)$ the homomorphism $h(P,\Phi) : R^* \to Sym^*$ sending an invariant polynomial $p$ to its evaluation $p(\Phi)$ on $\Phi$.

The anti-holomorphic involution $f$ on the compact  Riemann surface  $\Sigma$ induces  a natural anti-holomorphic involution on the spaces $H^0(\Sigma , K^j)$ given by sending a holomorphic differential $q$ to $\overline{ f^*(q) }$. The induced map $f : {\rm Sym}^* \to {\rm Sym}^*$ is an anti-linear graded ring involution. We may also define an anti-linear graded ring involution $f : R^* \to R^*$ as follows. Given  $p \in R^*$ and $x \in \mathfrak{g}_c$, set $(fp)(x) = -\overline{p( \rho(x) )}$, for $\rho$ the  compact anti-involution defined as in Section \ref{sec:inro-Higgs}. Combining these actions on $R^*$ and ${\rm Sym}^*$ we obtain an anti-linear involution $f : \mathcal{A}_{G_c} \to \mathcal{A}_{G_c}$.

Let $(P,\Phi)$ be a Higgs bundle pair which satisfies the Hitchin equations. Then $f(P,\Phi)$ has Higgs field $f^*(\Phi^*) = -f^*(\rho \Phi)$. Given $p$  any invariant polynomial, we find that
\begin{equation*}
\begin{aligned}
h( f(P,\Phi) ) (p) &= p( -f^*( \rho \Phi ) ) \\
&= - f^*( p \rho \Phi ) \\
& = f^*( \overline{ (fp)(\Phi) } ).
\end{aligned}
\end{equation*}
Hence, the Hitchin map commutes with the involutions:
\begin{equation*}\xymatrix{
\mathcal{M}_{G_c} \ar[r]^f \ar[d]^h & \mathcal{M}_{G_c} \ar[d]^h \\
\mathcal{A}_{G_c} \ar[r]^f & \mathcal{A}_{G_c}
}
\end{equation*}

\begin{remark}\label{inv:gl} In the case that $G_c = GL(n,\mathbb{C})$ a generating basis of invariant polynomials is given by the traces of powers $p_j(x) = {\rm tr}(x^j)$, and $$(fp_j)(x) = - \overline{ {\rm tr}( \rho(x)^j ) } = {\rm tr}( (x^j)^{t} ) = p_j(x),$$ where we have used $\rho x = -x^* = -\overline{x}^{t}$. Hence, for $GL(n,\mathbb{C})$ the above is a basis of generators for the invariant polynomials which are fixed by the involution $f : R^* \to R^*$. \end{remark}

For $L \subset \mathcal{A}_{G_c}$ the fixed point set of $f$ on $\mathcal{A}_{G_c}$,  it follows that the Hitchin fibration restricts to a map $h|_{\mathcal{L}_{G_c}} : \mathcal{L}_{G_c} \to L$. For simplicity, we shall drop the subscript  of $\mathcal{L}_{G_{c}}$, and refer to the $(A,B,A)$-brane $\mathcal{L}\subset \CM_{G_{c}}$. 

\begin{prop}\label{prop:sub}
Away from the singular fibres of the Hitchin map, the restriction $h|_{\mathcal{L}} : \mathcal{L} \to L$ is a submersion.
\end{prop}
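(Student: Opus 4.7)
The plan is to prove surjectivity of the differential $(dh|_{\mathcal{L}})_x : T_x \mathcal{L} \to T_{h(x)} L$ at every smooth point $x \in \mathcal{L}$ lying in the smooth locus of the Hitchin map, by an averaging argument using the anti-holomorphic involution $f$. First I note that at any $x \in \mathcal{L}$ away from singular fibres of $h$, the point $x$ is a smooth point of $\mathcal{M}_{G_c}$, so by Lemma~\ref{lem:fixedpoints} applied to the anti-holomorphic involution $f$ (with respect to the complex structure $I$, cf.\ Proposition~\ref{ind.inv}), $\mathcal{L}$ is locally a smooth real-analytic submanifold of real dimension $\dim_{\mathbb{C}} \mathcal{M}_{G_c}$. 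The same lemma applies to $L \subset \mathcal{A}_{G_c}$, since $f$ is also anti-holomorphic on the base.

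Next I would use the commutative diagram $h \circ f = f \circ h$ established just before the statement: differentiating at $x$ gives $(dh)_x \circ (df)_x = (df)_{h(x)} \circ (dh)_x$. Since $x$ is away from singular fibres, $(dh)_x : T_x \mathcal{M}_{G_c} \to T_{h(x)} \mathcal{A}_{G_c}$ is $\mathbb{C}$-linearly surjective. At the fixed point $x$, the differential $(df)_x$ is an $\mathbb{R}$-linear (and $I$-antilinear) involution whose $+1$-eigenspace is exactly $T_x \mathcal{L}$; similarly $T_{h(x)} L$ is the $+1$-eigenspace of $(df)_{h(x)}$.

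Given any $v \in T_{h(x)} L$, I choose $w' \in T_x \mathcal{M}_{G_c}$ with $(dh)_x(w') = v$, which exists by surjectivity. Then
\begin{equation*}
(dh)_x\bigl( (df)_x w' \bigr) \;=\; (df)_{h(x)}\bigl( (dh)_x w' \bigr) \;=\; (df)_{h(x)}(v) \;=\; v,
\end{equation*}
so the symmetrized vector $w := \tfrac{1}{2}\bigl(w' + (df)_x w'\bigr)$ is an element of $T_x \mathcal{L}$ (the $+1$-eigenspace) satisfying $(dh)_x(w) = v$. This shows $(dh|_{\mathcal{L}})_x$ is surjective, hence $h|_{\mathcal{L}}$ is a submersion at $x$.

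I do not anticipate a serious obstacle here: the argument is essentially formal once one has the equivariance of $h$ and smoothness of the fixed-point sets. The only small care needed is to verify that $T_x \mathcal{L}$ and $T_{h(x)} L$ really are the $+1$-eigenspaces of the respective involutions of the ambient complex tangent spaces (rather than some proper subspaces), which follows from Lemma~\ref{lem:fixedpoints} because the ambient spaces are smooth near $x$ and $h(x)$ respectively.
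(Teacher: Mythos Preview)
Your proof is correct but takes a genuinely different route from the paper. The paper argues via a symplectic dimension count: since $\mathcal{L}$ is a K\"ahler submanifold with respect to $(J,\omega_J|_{\mathcal{L}})$ and the non-singular Hitchin fibres are Lagrangian for $\Omega_I=\omega_J+i\omega_K$ (so $\omega_J$ vanishes on them), the kernel $K_x$ of $(h|_{\mathcal{L}})_*$ is $\omega_J$-isotropic in $T_x\mathcal{L}$, hence $\dim K_x\le\tfrac12\dim\mathcal{L}$; together with $\dim I_x\le\dim L=\tfrac12\dim\mathcal{L}$ and rank--nullity, equality is forced and $h|_{\mathcal{L}}$ is a submersion. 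Your averaging argument, by contrast, uses only the equivariance $h\circ f=f\circ h$ and the identification of $T_x\mathcal{L}$, $T_{h(x)}L$ with the $+1$-eigenspaces of the antilinear involutions---no symplectic geometry at all. Your approach is more elementary and would work for any $f$-equivariant submersion between complex manifolds with anti-holomorphic $f$; the paper's approach, on the other hand, simultaneously yields the extra information (stated immediately after the proof) that the fibres of $h|_{\mathcal{L}}$ are Lagrangian for $\omega_J|_{\mathcal{L}}$, which is the key input for Theorem~\ref{thm:intsys}.
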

\begin{proof}
Recall from Proposition \ref{prop:LJ} that $\mathcal{L} $ is a holomorphic submanifold of $\mathcal{M}_{G_c}$ with respect to $J$, so it is naturally a complex K\"ahler manifold with complex structure $J|_{\mathcal{L}}$ and K\"ahler form $\omega_J|_{\mathcal{L}}$. Moreover, from \cite{Hit92} the (non-singular) fibres of the Hitchin map are Lagrangian with respect to the complex symplectic form $\Omega_I = \omega_J + i\omega_K$. Thus, in particular, $\omega_J$ vanishes on the fibres of the Hitchin map. 

For any smooth point $x \in \mathcal{L}$ the kernel $K_x$ of $(h|_{\mathcal{L}})_*$ is an isotropic subspace of $T_x \mathcal{L}$ with respect to $\omega_J|_{\mathcal{L}}$, and thus ${\rm dim}(K_x) \le \frac{1}{2} {\rm dim}(\mathcal{L})$. But on the other hand the image $I_x$ of $(h|_{\mathcal{L}})_*$ lies in $L$, so we also have ${\rm dim}(I_x) \le {\rm dim}(L) = \frac{1}{2} {\rm dim}(\mathcal{L})$. So for any $x \in \mathcal{L}$ the dimensions satisfy
\begin{equation*}
{\rm dim}(\mathcal{L}) = {\rm dim}(K_x) + {\rm dim}(I_x) \le \frac{1}{2} {\rm dim}(\mathcal{L}) + \frac{1}{2} {\rm dim}(\mathcal{L}) = {\rm dim}(\mathcal{L}).
\end{equation*}
Hence, since there must be equalities throughout, the restriction $h|_{\mathcal{L}}$ is a submersion.
\end{proof}

It follows from the  above proof that away from the singular fibres of the Hitchin fibration, the fibres of $h|_{\mathcal{L}} : \mathcal{L} \to L$ are Lagrangian submanifolds of $\mathcal{L}$, where the symplectic structure on $\mathcal{L}$ is given by $\omega_J |_{\mathcal{L}}$.

\begin{thm}\label{thm:intsys}
If $\mathcal{L}_{G_c}$ contains smooth points then the restriction of the Hitchin fibration $$h|_{\mathcal{L}_{G_c}} : \mathcal{L}_{G_c} \to L$$ to $\mathcal{L}_{G_c}$ is a Lagrangian fibration with singularities. The generic fibre is smooth and consists of a finite number of tori.
\end{thm}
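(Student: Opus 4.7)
The plan is to leverage Proposition \ref{prop:sub}, which already provides most of the first claim: away from the singular fibres of the full Hitchin map, $h|_{\mathcal{L}_{G_c}}$ is a submersion whose fibres are Lagrangian with respect to the restricted symplectic form $\omega_J|_{\mathcal{L}_{G_c}}$. This establishes the "Lagrangian fibration with singularities" statement, where the singular locus is the preimage under $h|_{\mathcal{L}_{G_c}}$ of the discriminant locus of $h$ intersected with $L$. What remains is to identify the topology of the generic fibre.

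For the torus structure, I would fix a regular value $a \in L$ of $h$ and analyze $h|_{\mathcal{L}_{G_c}}^{-1}(a) = \mathcal{L}_{G_c} \cap h^{-1}(a)$. By standard Hitchin theory (and the spectral data results of Section \ref{sec:invo-spec} for classical $G_c$), the fibre $h^{-1}(a)$ over a regular value is a smooth complex abelian variety $A_a$ (a Jacobian or Prym variety of the spectral curve, possibly after passing to a suitable torsor). The commutativity of the diagram in Section \ref{sec:hitFib} shows that $f$ preserves $h^{-1}(a)$ for $a \in L$. Since $h$ is holomorphic with respect to $I$, the fibre $A_a$ is an $I$-complex submanifold; and since by Proposition \ref{prop:LJ1} the involution $f$ is anti-holomorphic with respect to $I$, its restriction $f|_{A_a}$ is an anti-holomorphic involution on the abelian variety $A_a$.

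The key ingredient is then the classical fact that the fixed point set of an anti-holomorphic involution $\sigma$ on a complex torus $A = V/\Lambda$ is either empty or a finite disjoint union of real subtori, each of real dimension equal to $\dim_{\mathbb{C}} A$. This is proved by translating a fixed point to the origin, lifting $\sigma$ to an anti-linear involution on $V$ whose fixed locus $V^\sigma$ is a totally real subspace, observing that $V^\sigma \cap \Lambda$ is a lattice in $V^\sigma$, and using that the full fixed set is a union of cosets indexed by the finite group $\ker(1+\sigma|_\Lambda)/(1-\sigma)\Lambda$. Combined with Lemma \ref{lem:fixedpoints} (applied inside the fibre) this gives smoothness and the correct real dimension $\dim_{\mathbb{C}} A_a = \dim_{\mathbb{R}} L$, matching the expected Lagrangian fibre dimension.

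The main obstacle I foresee is justifying that a generic point of $L$ is in fact a regular value of $h$ and supports a non-empty fixed locus on the corresponding abelian fibre. Non-emptiness is handled by hypothesis: a smooth point of $\mathcal{L}_{G_c}$ must, by the submersion property in Proposition \ref{prop:sub}, lie over some regular value in $L$, and the submersion opens this to a neighbourhood; together with the fact that the discriminant locus of $h$ is a proper analytic subvariety of $\mathcal{A}_{G_c}$, the intersection of $L$ with the regular locus is open and dense in the real analytic sense in a neighbourhood of the image of any smooth point, which is enough to conclude the statement about the generic fibre.
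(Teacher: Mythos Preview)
Your argument for the torus structure of the generic fibre is valid but differs from the paper's: you use that the smooth Hitchin fibre is an abelian variety and that the fixed locus of an anti-holomorphic involution on a complex torus, if non-empty, is a finite union of half-dimensional real tori. The paper instead argues intrinsically via the Liouville--Arnold mechanism: the Lagrangian fibration furnishes $\dim(\mathcal{L})/2$ commuting vector fields on each smooth fibre, complete because $h$ is proper, so each compact connected component is a torus. The paper's route avoids invoking the abelian-variety description of the fibres and works uniformly for all $G_c$ without appeal to spectral data; your route is more concrete and anticipates the analysis of Section~\ref{sec:invo-spec}.

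There is, however, a genuine gap in your handling of the ``main obstacle''. You must show that some smooth point of $\mathcal{L}_{G_c}$ lies over a regular value of $h$, and your invocation of Proposition~\ref{prop:sub} for this is circular: that proposition only asserts that $h|_{\mathcal{L}}$ is a submersion at points \emph{already known} to lie over regular values; it does not guarantee that a given smooth point of $\mathcal{L}_{G_c}$ is such a point. Your subsequent observation that the discriminant meets $L$ in a nowhere-dense set is correct but concerns the base, not the total space: it shows $L$ contains regular values, not that $\mathcal{L}_{G_c}$ meets the fibres over them. The paper closes this gap upstairs in $\mathcal{M}_{G_c}$: the critical locus $V$ of $h$ is a proper $I$-complex analytic subvariety, while near any smooth fixed point the submanifold $\mathcal{L}_{G_c}$ is a real form (its tangent space is the fixed set of an anti-$I$-linear involution on the ambient tangent space), and such a real form cannot lie entirely inside a proper complex subvariety, as one sees by comparing power series. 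Once a single smooth point over a regular value is secured in this way, the remainder of your argument goes through.
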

\begin{proof}
Assume that $\mathcal{L}$ contains smooth points. Then $\mathcal{L}$ meets the smooth points of the Higgs bundle moduli space $\mathcal{M}_{G_c}$ in a smooth real analytic submanifold. At a smooth fixed point $p$ the tangent space to $\mathcal{L}$ defines a real structure on $T_p \mathcal{M}_{G_c}$. On the other hand the smooth points where the Hitchin map is not a submersion form a complex analytic subvariety $V$. By considering power series one sees that the smooth points of $\mathcal{L}$ can not lie entirely in $V$, so that the Hitchin map is a submersion at a generic smooth point of $\mathcal{L}$. This also shows that the image $h( U )$ of any non-empty open subset $U$ of $\CL$ does not lie entirely in the space of critical values of $h$. Hence the generic fibre of the restricted Hitchin map $h|_{\mathcal{L}} : \mathcal{L} \to L$ is smooth, giving a Lagrangian fibration with singularities. On the generic smooth fibres of $h|_{\mathcal{L}}$ we obtain ${\rm dim}(\mathcal{L})/2$ commuting vector fields, which are complete since $h$ is a proper map (eg: \cite{N1} for the rank $2$ case). It follows that the generic fibres of $h|_{\mathcal{L}}$ consist of finitely many copies of a torus.
\end{proof}

\begin{rem}
Let $\Delta \subset L$ denote the points of $L$ over which the Hitchin map is not a submersion. Each fibre of $\mathcal{L}$ over $L \setminus \Delta$ consists of a finite number of tori. In general $L \setminus \Delta$ may be disconnected and the number of tori in a given fibre varies as one moves between the components of $L \setminus \Delta$.
\end{rem}

The above analysis establishes that the fixed point set $\mathcal{L}$ has the structure of a real integrable system with singularities.


\section{Spectral data for $\CL_{G_{c}}$} \label{sec:invo-spec}

As seen in  Proposition \ref{prop:LJ1}, the map $f$ induces an involution on the moduli space $\mathcal{M}_{G_{c}}$ of polystable $G_{c}$-Higgs bundles
\begin{eqnarray}
f: \mathcal{M}_{G_{c}}&\rightarrow &\mathcal{M}_{G_{c}}\nonumber \\
(\overline{\partial}_A, \Phi) &\mapsto & (f^*(\partial_A), f^*(\Phi^*)   ). \nonumber
\end{eqnarray}
In this section we shall describe how $f$ acts in terms of the spectral data associated to $G_{c}$-Higgs bundles, for $G_{c}$ a classical complex Lie group and see that the fixed point set $\mathcal{L}$ is non-empty (see e.g., \cite{Hit07} for the construction of the spectral data).

\begin{rem}
In the case of $G_c = GL(n,\mathbb{C})$ the action $(\overline{\partial}_A, \Phi) \mapsto (f^*(\partial_A), f^*(\Phi^*) )$ makes sense for Higgs bundles of any degree, so that the action of $f$ can be extended to the full moduli space of $GL(n,\mathbb{C})$-Higgs bundles of arbitrary degree. However, we shall see that there can only be fixed points in degree $0$. This extended action can be interpreted as the induced action of $f$ on representations of a central extension of $\pi_1(\Sigma)$, following \cite{atbott}.
\end{rem}


\subsection{Spectral data for $GL(n,\mathbb{C})$}\label{sec:spectralgln}

Recall from \cite{N2} that the fibre of the Hitchin fibration for classical Higgs bundles is isomorphic to the Jacobian of a curve. Indeed, a classical Higgs bundle $(E,\Phi)$, or $GL(n,\C)$-Higgs bundle, has associated an $n$-fold cover $p:S\rightarrow \Sigma$, in the total space of $K$, with equation
\[\det (\eta-\Phi)=0, \]
 for $\eta$ the tautological section of $p^{*}K$, together with a line bundle $U$ on $S$ satisfying $p_{*}U=E$. Explicitly, $S$ has equation
\begin{equation}
\eta^n + a_{1}\eta^{n-1}+a_2 \eta^{n-2} + a_3 \eta^{n-3} + \cdots + a_n = 0,\label{eq.spec.gl}
\end{equation}
where $a_m\in H^{0}(\Sigma,K^m)$. Moreover, through Remark \ref{inv:gl} one can obtain a basis of invariant polynomials fixed by the induced involution, giving the coefficients $a_{i}$ via the Hitchin map.

The line bundle $U$ associated to a $GL(n,\mathbb{C})$-Higgs bundle $(E,\Phi)$  represents the eigenspaces of $\Phi$ and may be defined by the following exact sequence \cite{bobi}:
\begin{equation}\label{eq.specline}
\xymatrix{
0 \ar[r] & U \otimes p^* K^{1-n} \ar[r] &  p^*E \ar[rr]^(.4){\eta - p^*(\Phi)} & & p^*(E \otimes K)  \ar[r] & U \otimes p^*K \ar[r] & 0.
}
\end{equation}

Conversely given sections $a_i \in H^0(\Sigma , K^i)$ such that the associated curve $S$ defined by Equation (\ref{eq.spec.gl}) is smooth, and a line bundle $U$ on $S$, one has a corresponding rank $n$ stable Higgs bundle $(E,\Phi)$ by considering $E = p_*(U)$ and $\Phi$ the map obtained by pushing forward the tautological section $\eta : U \to U \otimes p^* K$.

As seen in Section \ref{sec:inro-Higgs}, we may view a Higgs bundle $(\overline{\partial}_A , \Phi)$ as consisting of a holomorphic structure $\overline{\partial}_A$ and a Higgs field $\Phi$, where $\Phi \in \Omega^{1,0}( \Sigma , \mathfrak{g}_c )$ is holomorphic with respect to $\overline{\partial}_A$. From Proposition \ref{ind.inv}, the action of the anti-holomorphic involution $f$ sends the pair $(\overline{\partial}_A,\Phi)$ to $f(\overline{\partial}_A,\Phi) = (f^*(\partial_A) , f^*(\Phi^*)).$ Hence, the spectral curve for $f(\overline{\partial}_A,\Phi)$ has equation
\begin{equation}\label{eq.spec.fsl}
{\rm det}(\eta - f^*(\Phi^{*})) = 0.
\end{equation}
 
The anti-involution $f$ naturally lifts to an anti-holomorphic involution $\tilde{f} : K \to K$ on the total space   by sending $y \in K_x$ to $f^*(\overline{y}) \in K_{f(x)}$.

\begin{defn} We denote by $\tilde{f}$ the natural lift of $f$ to all powers $K^m$ of $K$. \label{def.tilde.f}
\end{defn}

Let $p_K : K \to \Sigma$ be the projection from the total space of $K$. Then, the map $\tilde{f}$ can be lifted to a natural action $\tilde{f} : p_K^*(K^m) \to p_K^*(K^m)$ on the total space of $p_K^*(K^m)$. Moreover since $\tilde{f}^*(\eta) = \eta$, one has 
\begin{equation*}
\tilde{f}( {\rm det}(\eta- \Phi ) ) = {\rm det}( \tilde{f}(\eta) - f^*(\Phi^*) ) = {\rm det}( \eta - f^*(\Phi^*)),
\end{equation*}
and thus the action of $\tilde{f}$ on $K$ restricts to a bijection between the spectral curves for $(\overline{\partial}_A,\Phi)$ and $f(\overline{\partial}_A,\Phi)$. Let $(\overline{\partial}_A,\Phi)$ be a fixed point in the moduli space, so $f(\overline{\partial}_A,\Phi)$ is gauge equivalent to $(\overline{\partial}_A,\Phi)$ under a complex gauge transformation. In particular, since $(\overline{\partial}_A,\Phi)$ and $f(\overline{\partial}_A,\Phi)$ are in the same isomorphism class,  this requires the spectral curves to coincide.
\begin{prop} \label{prop.eigen.clas}
The spectral curves for $(\overline{\partial}_A,\Phi)$ and $f(\overline{\partial}_A,\Phi)$ coincide if and only if the coefficients $a_m$ in (\ref{eq.spec.gl}) satisfy the reality conditions
\begin{equation}
a_m = f^*(\overline{a_m}).\label{eq.real.cond.sl1}
\end{equation}

\end{prop}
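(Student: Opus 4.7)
The plan is to extract the defining polynomial of the spectral curve of $f(\overline{\partial}_A,\Phi)$ in the same monic form as \eqref{eq.spec.gl} and then compare coefficients. Concretely, I would argue that since the total space of $K$ over $\Sigma$ is equipped with the anti-holomorphic lift $\tilde f$ (Definition \ref{def.tilde.f}) satisfying $\tilde f^*\eta=\eta$, applying $\tilde f$ coefficient-wise to the identity
\begin{equation*}
\det(\eta-\Phi)\;=\;\eta^n + a_1\eta^{n-1}+\cdots+a_n
\end{equation*}
produces
\begin{equation*}
\det(\eta-f^*(\Phi^{*}))\;=\;\eta^n+\tilde f(a_1)\eta^{n-1}+\cdots+\tilde f(a_n),
\end{equation*}
where the identification of the left-hand sides is exactly the computation $\tilde f(\det(\eta-\Phi))=\det(\tilde f(\eta)-f^*(\Phi^*))$ recorded above the proposition. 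By the definition of $\tilde f$ acting on holomorphic sections of $K^m$, one has $\tilde f(a_m)=f^*(\overline{a_m})$. Therefore the spectral curve of $f(\overline{\partial}_A,\Phi)$, given by \eqref{eq.spec.fsl}, is defined by the monic polynomial of degree $n$ in $\eta$ with coefficients $f^*(\overline{a_m})\in H^0(\Sigma,K^m)$.

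Next I would invoke the uniqueness of the characteristic polynomial to compare curves. Both spectral curves, viewed as divisors in the total space of $K$, are cut out by monic polynomials of degree $n$ in the tautological section $\eta$, with coefficients in $H^0(\Sigma, K^m)$. Since $\eta$ is transcendental over the ring of pulled-back functions from $\Sigma$, a monic polynomial in $\eta$ is uniquely determined by the scheme it cuts out (equivalently, the characteristic polynomial of $\Phi$ is recovered from the spectral curve together with the projection to $\Sigma$). Hence the two spectral curves coincide if and only if the two sequences of coefficients agree, i.e. $a_m = f^*(\overline{a_m})$ for every $m=1,\dots,n$.

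Assembling the two directions: if the reality conditions \eqref{eq.real.cond.sl1} hold, the two monic polynomials are literally the same, so the two curves coincide; conversely if the curves coincide, uniqueness of the defining polynomial forces each coefficient identity $a_m=f^*(\overline{a_m})$. The main obstacle I anticipate is purely bookkeeping: making sure that the anti-linear action of $\tilde f$ on sections of $K^m$ really does coincide with the operation $a_m\mapsto f^*(\overline{a_m})$ used in the stated reality condition, and that passing from the geometric identity $\tilde f(S)=S$ of the curves to the algebraic identity of defining polynomials is legitimate. Both points are immediate once one fixes $\tilde f$ as the fibrewise conjugate-linear lift of $f$ to each $K^m$, which is how it was defined.
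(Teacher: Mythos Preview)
Your argument is correct and follows essentially the same route as the paper: both compute that the defining monic polynomial of the spectral curve of $f(\overline{\partial}_A,\Phi)$ has coefficients $f^*(\overline{a_m})$ (the paper phrases this via ``$\Phi$ and $\Phi^*$ have conjugate eigenvalues'', you via the action of $\tilde f$ on the expanded characteristic polynomial), and then conclude by uniqueness of the monic defining polynomial. Your version makes the uniqueness step more explicit, but the content is the same.
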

\begin{proof}
Since $\Phi$ and $\Phi^*$ have conjugate eigenvalues we find from (\ref{eq.spec.fsl}) that the coefficient of $\lambda^{n-m}$ in the spectral curve for $f(A,\Phi)$ is given by $f^*(\overline{a_{m}})$. Hence, from (\ref{eq.spec.fsl}), the spectral curves for $(\overline{\partial}_A,\Phi)$ and $f(\overline{\partial}_A,\Phi)$ coincide if and only if $a_m$ satisfy $
a_m = f^*(\overline{a_m}).
$\end{proof}
\begin{rem}When  condition (\ref{eq.real.cond.sl1}) holds we have that $\tilde{f}$ restricts to an anti-holomorphic involution on the spectral curve $S$ covering $f$.
\end{rem}

Bearing in mind the spectral data associated to classical Higgs bundles, one can obtain a similar description of the spectral data associated to $f(\overline{\partial}_A,\Phi)$.

\begin{prop}\label{prop:data:classic}
For $U$ the eigen-line associated to a classical Higgs pair $(\overline{\partial}_A,\Phi)$, the spectral data on the curve $S$ for $f(\overline{\partial}_A,\Phi)$ is given by the line bundle 
\begin{eqnarray*}
\tilde{f}^*( \overline{U}^* \otimes p^{*}\overline{K}^{n-1}) = \tilde{f}^*(\overline{U}^*) \otimes p^{*} K^{n-1}.
\end{eqnarray*}
\end{prop}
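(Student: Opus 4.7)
The plan is to derive the defining exact sequence for the eigenline of $f(\overline\partial_A, \Phi) = (f^*\partial_A, f^*\Phi^*)$ by transforming (\ref{eq.specline}) via dualization followed by the ``conjugate-then-$\tilde f$-pullback'' operation $F \mapsto \tilde f^*(\overline F)$, and reading off the eigenline from the result.

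First I would dualize (\ref{eq.specline}). After shifting by $p^*K$, the dualized four-term sequence has the standard form of the spectral sequence for the transposed Higgs bundle $(E^*, \Phi^t)$ on the same curve $S$ (since $\det(\eta-\Phi^t) = \det(\eta-\Phi)$), and its eigenline is identified as $U^* \otimes p^*K^{n-1}$.

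Next I would apply termwise the operation $F \mapsto \tilde f^*(\overline F)$. This operation preserves the holomorphic category because $\tilde f$ is anti-holomorphic and complex conjugation reverses the holomorphic structure; it sends line bundles on $S$ to line bundles on $\tilde f(S)$. Three identifications simplify the outcome: $f^*\overline K \cong K$ (from $f$ being anti-holomorphic), $\tilde f^*(\overline\eta) = \eta$ (from $\tilde f^*\eta = \eta$ as recorded in Definition \ref{def.tilde.f} and the discussion following it), and $\overline{\Phi^t} = \Phi^*$ in the $GL(n,\mathbb{C})$ case with compact form $U(n)$. Using these identifications one checks that the resulting exact sequence is precisely the spectral sequence for the Higgs bundle $(f^*\partial_A, f^*\Phi^*) = f(\overline\partial_A, \Phi)$, and the eigenline read off is
$$\tilde f^*(\overline U^* \otimes p^*\overline K^{n-1}) = \tilde f^*(\overline U^*) \otimes p^*K^{n-1},$$
where the second equality uses $\tilde f^* \circ p^* = p^* \circ f^*$ together with $f^*\overline K \cong K$.

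The main obstacle is the identification step: one must carefully verify that the conjugate-pullback $f^*\overline{E^*}$, which arises from applying the operation to the middle bundle of the dualized sequence, is holomorphically identified with $(f^*E, f^*\partial_A)$ via the unitary reduction, so that the transformed sequence really does describe the spectral data of $f(\overline\partial_A, \Phi)$. This identification reflects the interplay between dualization, complex conjugation, and the hermitian metric, and it is what ties the formal sequence manipulation back to the geometric action of $f$ on the moduli space described in Proposition \ref{ind.inv}.
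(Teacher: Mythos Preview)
Your proposal is correct and follows essentially the same route as the paper: dualize the defining exact sequence (\ref{eq.specline}), apply the conjugate-pullback by $\tilde f$, tensor by $p^*K$, and read off the eigenline. The step you flag as ``the main obstacle'' --- identifying $f^*(\overline{E}^*)$ with the holomorphic bundle underlying $f^*(\partial_A)$ --- is exactly what the paper handles first, via a short local computation in a unitary frame showing that $\partial_A = \partial - \overline{A^t}$ so that $f^*(\partial_A)$ is the natural $\overline\partial$-operator on $f^*(\overline{E}^*)$.
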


\begin{proof}
Let $U'$ denote the spectral line associated to the pair $f(\overline{\partial}_A,\Phi) = (f^*(\partial_A) , f^*(\Phi^*))$. If $\overline{\partial}_A$ is a $\overline{\partial}$-operator on $E$ then $f^*(\partial_A)$ is the $\overline{\partial}$-operator naturally associated to $f^*(\overline{E}^*)$. To see this we can locally write $\overline{\partial}_A$ in the form $\overline{\partial}_A = \overline{\partial} + A$ with respect to a unitary frame. Then the corresponding complex structure on $f^*(\overline{E})$ is $\overline{\partial} + f^*(\overline{A})$, and the associated complex structure on $f^*(\overline{E}^*)$ is $\overline{\partial} - f^*(\overline{A^t})$. The claim follows since $\partial_A = \partial + \rho(A) = \partial - \overline{A^t}$. Similarly the induced Higgs field on $f^*(\overline{E}^*)$ is $f^*( \overline{\Phi^t}) = f^*(\Phi^*)$. To determine the spectral line associated to $f^*(\Phi^*)$ consider the exact sequence (\ref{eq.specline}) defining $U$. Dualising, pulling back by $\tilde{f}$, conjugating and tensoring by $p^*K$, we obtain the exact sequence
\begin{equation*}
\xymatrix{
0 \ar[r] & \tilde{f}^*(\overline{U}^*) \ar[r] &  p^*f^*(\overline{E}^*) \ar[rr]^(.43){\eta - p^*(\Phi^*)} & & p^*(f^*(\overline{E}^*) \otimes K)  \ar[r] & \tilde{f}^*(\overline{U}^*) \otimes p^*K^n \ar[r] & 0.
}
\end{equation*}
From this we see that the spectral line $U'$ for $f(\overline{\partial}_A,\Phi)$ is given by $U' = \tilde{f}^*(\overline{U}^*) \otimes p^*K^{n-1}$ as required.
 
\end{proof}


\begin{defn}\label{def:iota}
We denote by  $\iota : Pic(S) \to Pic(S)$  the natural involution defined by \[U\mapsto \tilde{f}^*( \overline{U}^* ) \otimes p^{*} K^{n-1}.\]
\end{defn}

 It is clear that $\iota$ is anti-holomorphic with respect to the natural complex structure on $Pic(S)$. We have thus established the following:

\begin{prop}
Let $(\overline{\partial}_A,\Phi)$ be a stable classical Higgs bundle with smooth spectral curve $S \to \Sigma$ and eigen-line $U\in Pic(S)$. Then $(\overline{\partial}_A,\Phi)$ is gauge equivalent to $f(\overline{\partial}_A,\Phi)$ if and only if 
\begin{itemize}
\item{$S$ is carried to itself under the natural lift $\tilde{f} : K \to K$ of $f$;}
\item{$U$ is fixed by the natural involution $\iota$ on $Pic(S)$.}
\end{itemize}
\end{prop}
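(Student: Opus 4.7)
The plan is to treat this as a packaging statement that follows by combining the two immediately preceding propositions through the Beauville--Narasimhan--Ramanan (BNR) spectral correspondence. First I would invoke the standard result that for a smooth spectral curve $S$ sitting in the total space of $K$, stable rank-$n$ Higgs bundles with spectral curve $S$ correspond bijectively, up to gauge equivalence, to line bundles on $S$; the correspondence sends $(\overline{\partial}_A,\Phi)$ to its eigen-line $U$ defined by the exact sequence (\ref{eq.specline}), with inverse $U \mapsto (p_*U, p_*\eta)$. This is the structural input that allows one to read ``spectral data match'' as equivalent to ``Higgs bundles are isomorphic'' in both directions.

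Next I would establish the forward implication. Suppose $(\overline{\partial}_A,\Phi)$ is gauge equivalent to $f(\overline{\partial}_A,\Phi)$. Then in particular their spectral curves coincide. The discussion preceding Proposition~\ref{prop.eigen.clas}, together with the identity $\tilde{f}^{*}(\eta)=\eta$, shows that the spectral curve of $f(\overline{\partial}_A,\Phi)$ is exactly the image $\tilde{f}(S)$ of $S$ under the natural lift of $f$ to $K$. Hence $\tilde{f}(S)=S$. Proposition~\ref{prop:data:classic} then identifies the eigen-line of $f(\overline{\partial}_A,\Phi)$ as $\tilde{f}^{*}(\overline{U}^{*}) \otimes p^{*}K^{\,n-1}$, which by Definition~\ref{def:iota} is precisely $\iota(U)$. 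By BNR, gauge equivalence forces $U \simeq \iota(U)$.

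Conversely, if $\tilde{f}(S)=S$ and $U \simeq \iota(U)$, then $(\overline{\partial}_A,\Phi)$ and $f(\overline{\partial}_A,\Phi)$ are both stable Higgs bundles sharing the same smooth spectral curve $S$ and carrying isomorphic eigen-lines, so the BNR correspondence produces the required gauge equivalence.

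The main point to handle carefully -- and what I expect to be the only subtlety -- is making sure the two hypotheses are being used in the right places. Smoothness of $S$ is what guarantees that $E = p_{*}U$ is a genuine vector bundle and that the pushed-forward tautological section realises the Higgs field, so that the dictionary between sequence (\ref{eq.specline}) and the pair $(E,\Phi)$ is a true equivalence. Stability of $(\overline{\partial}_A,\Phi)$ is then what ensures the correspondence is a bijection on gauge equivalence classes of Higgs bundles, rather than being many-to-one on a strictly semistable locus or requiring one to quotient by extension ambiguities. Beyond pinning these down, the proof contains no computation not already carried out in Propositions~\ref{prop.eigen.clas} and~\ref{prop:data:classic}, so the argument reduces essentially to bookkeeping.
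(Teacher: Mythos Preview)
Your proposal is correct and matches the paper's approach exactly: the paper presents this proposition with no separate proof, prefacing it only with ``We have thus established the following,'' so it is indeed intended as a packaging of Propositions~\ref{prop.eigen.clas} and~\ref{prop:data:classic} through the BNR correspondence, precisely as you outline. One small remark: your comment on the role of stability is slightly off --- once the spectral curve $S$ is smooth, \emph{every} line bundle on $S$ pushes forward to a stable Higgs bundle, so stability is a consequence of smoothness rather than an independent hypothesis needed to make the dictionary bijective.
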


\begin{defn}\label{def:iota0}
Define $\iota_0 : Jac(S) \to Jac(S)$ as the involution 
\[\iota_0: U \mapsto \tilde{f}^*(\overline{U}^*).\]
\end{defn}
The involution $\iota$ on $Jac(S)$ as in Definition \ref{def:iota}  can thus be expressed as
\[U\mapsto \iota_{0}(U)\otimes p^{*}K^{n-1},\]
and  changes degree according to
\begin{equation*}
{\rm deg}(\iota U) = -{\rm deg}(U) + n(n-1)(2g-2).
\end{equation*}
Thus only line bundles of degree $n(n-1)(g-1)$ can be fixed by $\iota$.  Let $Jac_d(S)$ denote the component of $Pic(S)$ consisting of line bundles of degree $d$ and write $Jac(S)$ for the Jacobian $Jac_0(S)$.
From the above analysis, any fixed point of  $\iota$ must lie in $Jac_{d}(S)$ for $d = n(n-1)(g-1)$, but we have not yet established the existence of any fixed points. For this, let $K^{1/2}$ be a theta characteristic corresponding to an $f$-invariant spin structure on $\Sigma$. Such spin structures exist by Proposition \ref{prop.fixedspin}, and by Proposition \ref{prop.actionspin} this corresponds to the existence of a theta characteristic $K^{1/2}$ such that $f^*(\overline{K}^{1/2}) \simeq K^{1/2}$. Now since $p^*K^{(n-1)/2}$ has degree $d = n(n-1)(g-1)$, tensoring by $p^* K^{(n-1)/2}$ defines a bijection $Jac(S) \to Jac_d(S)$. For $M$ a line bundle in $Jac(S)$, the action of $\iota$ on $M \otimes p^* K^{(n-1)/2}$ is
\begin{equation*}
\iota( M \otimes p^* K^{(n-1)/2} ) =  \iota_0(M) \otimes p^* K^{(1-n)/2} \otimes p^* K^{n-1} = \iota_0(M) \otimes p^* K^{(n-1)/2}.
\end{equation*}
Thus $M \otimes p^* K^{(n-1)/2}$ is fixed by $\iota$ if and only if $M$ is fixed by $\iota_0$.

\begin{lem}\label{lem.torusinvol}
Let $A$ be a complex torus of dimension $m$ and $\theta : A \to A$ an anti-holomorphic group automorphism. Then the fixed point set $A^\theta$ of $A$ is isomorphic to a product $T \times (\mathbb{Z}_2)^d$ of a real torus $T$ of dimension $m$ and a discrete group $(\mathbb{Z}_2)^d$, for some $0 \le d \le m$. Every component of $A^\theta$ contains a point of order $2$ and there are exactly $2^{m+d} $ points of order $2$ fixed by $\theta$.
\end{lem}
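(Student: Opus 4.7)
The approach is to work on the universal cover $V$ of $A$. Writing $A=V/\Lambda$ with $V\cong\mathbb{C}^{m}$, I would first observe that, since $\theta$ is a continuous group automorphism fixing $0$, it admits a unique continuous group-automorphism lift $\tilde\theta : V\to V$. This lift is automatically $\mathbb{R}$-linear; anti-holomorphicity forces it to be $\mathbb{C}$-antilinear; and the uniqueness of lifts forces $\tilde\theta^{2}=\mathrm{id}_{V}$, since both $\tilde\theta^{2}$ and $\mathrm{id}_{V}$ are $\mathbb{R}$-linear maps fixing $0$ and descending to $\theta^{2}=\mathrm{id}_{A}$. Decomposing $V=V^{+}\oplus V^{-}$ into the $\pm 1$-eigenspaces of $\tilde\theta$, antilinearity gives $V^{-}=iV^{+}$, so each summand is a real subspace of dimension $m$.

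Then I would describe $A^{\theta}$ lattice-theoretically. A class $[v]\in A$ is fixed iff $\tilde\theta(v)-v\in\Lambda$; setting $\psi=\tilde\theta-\mathrm{id}$, which vanishes on $V^{+}$ and acts as $-2\cdot\mathrm{id}$ on $V^{-}$, the fixed set is the image in $A$ of $W:=\psi^{-1}(\Lambda)=V^{+}\oplus M$, where $M=V^{-}\cap\tfrac{1}{2}\Lambda$. Putting $\Lambda^{\pm}=\Lambda\cap V^{\pm}$ and writing any $\lambda\in\Lambda$ as $\lambda^{+}+\lambda^{-}$, the identities $\lambda\pm\tilde\theta(\lambda)=2\lambda^{\pm}\in\Lambda$ yield the decisive inclusion
\begin{equation*}
\Lambda^{+}\oplus\Lambda^{-}\;\subset\;\Lambda\;\subset\;\tfrac{1}{2}\Lambda^{+}\oplus\tfrac{1}{2}\Lambda^{-}.
\end{equation*}
In particular, since $\Lambda$ spans $V$ over $\mathbb{R}$, the group $\Lambda^{+}$ is a full rank-$m$ lattice in $V^{+}$, so $T:=V^{+}/\Lambda^{+}$ is a real torus of dimension $m$; it is the identity component of $A^{\theta}$.

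To compute $\pi_{0}(A^{\theta})$, I would note that the preimage of $T$ in $V$ is $V^{+}+\Lambda$, so $A^{\theta}/T=W/(V^{+}+\Lambda)\cong M/\pi^{-}(\Lambda)$, where $\pi^{-}:V\to V^{-}$ is the projection. Both $M$ and $\pi^{-}(\Lambda)$ are full rank-$m$ lattices in $V^{-}$, and the inclusions $2M\subset\Lambda^{-}\subset\pi^{-}(\Lambda)$ force the quotient to be generated by $m$ elements of order $\le 2$; hence $A^{\theta}/T\cong(\mathbb{Z}_{2})^{d}$ for some $0\le d\le m$. Finally I would split the short exact sequence $0\to T\to A^{\theta}\to(\mathbb{Z}_{2})^{d}\to 0$ using divisibility of $T$: given a lift $g\in A^{\theta}$ of any basis element of $(\mathbb{Z}_{2})^{d}$, one has $g^{2}\in T$, so choosing $t\in T$ with $t^{2}=g^{2}$ gives an order-$\le 2$ lift $gt^{-1}$; these lifts assemble, by abelianness and since the target has exponent $2$, into a group-theoretic splitting, yielding $A^{\theta}\cong T\times(\mathbb{Z}_{2})^{d}$. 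The count $|T[2]|\cdot 2^{d}=2^{m+d}$ of $2$-torsion and the fact that each component contains such a point are then immediate. The main technical obstacle is the inclusion $\Lambda\subset\tfrac{1}{2}\Lambda^{+}\oplus\tfrac{1}{2}\Lambda^{-}$, which simultaneously guarantees that $\Lambda^{+}$ has full rank and that all the relevant index quotients are elementary $2$-groups; once this is in hand, the rest is structural.
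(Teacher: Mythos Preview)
Your argument is correct and complete. It differs from the paper's proof in its overall strategy: you work explicitly on the universal cover, decompose $V=V^{+}\oplus V^{-}$ into the eigenspaces of the lifted antilinear involution, and read off both the identity component $T=V^{+}/\Lambda^{+}$ and the component group $M/\pi^{-}(\Lambda)$ from the key lattice inclusions $\Lambda^{+}\oplus\Lambda^{-}\subset\Lambda\subset\tfrac{1}{2}\Lambda^{+}\oplus\tfrac{1}{2}\Lambda^{-}$. The paper instead argues abstractly: $A^{\theta}$ is a closed subgroup of a compact abelian Lie group, hence (torus)$\times$(finite); the torus has real dimension $m$ because $\theta$ is anti-holomorphic; and the component group is $2$-torsion because for any fixed $x$ the element $2x=x+\theta(x)$ lies in the image of the connected map $y\mapsto y+\theta(y)$, hence in the identity component. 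Both proofs then invoke divisibility of the torus to split the short exact sequence. Your route is longer but yields an explicit lattice description of $\pi_{0}(A^{\theta})$, which could be useful for computations; the paper's route is shorter and isolates the single conceptual point (the map $y\mapsto y+\theta(y)$) that forces the component group to have exponent~$2$.
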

\begin{proof}
Since $\theta$ is an automorphism the fixed point set $A^\theta$ is a closed subgroup, hence a compact abelian subgroup. The identity component $A^\theta_0$ is therefore a torus which must have real dimension $m$ since $\theta$ is an anti-holomorphic involution. Let $H = A^\theta/A^\theta_0$ be the finite abelian group of components of $A^\theta$. The exact sequence $0 \to A^\theta_0 \to A^\theta \to H \to 0$ splits because $A^\theta_0$ is a divisible group. We claim that every element of $H$ is $2$-torsion. Indeed, if $x \in A^\theta$ then $x + x = x + \theta(x)$ lies in the identity component $A^\theta_0$ because it lies in the image of the connected group $A$ under the map $A \to A^\theta$ which sends a point $y \in A$ to $y + \theta(y)$. It remains only to show that every component of $A^\theta$ contains a point of order $2$. This follows since $A^\theta_0$ is a divisible group. The inequality $d \le m$ holds since $A$ has $2^{2m}$ points of order $2$. 
\end{proof}

\begin{prop}\label{prop:existence.gl}
The fixed point set $\CL_{GL(n,\mathbb{C})}$ of the involution $f$ on $\mathcal{M}_{GL(n,\mathbb{C})}$ is non-empty and contains smooth points of $\mathcal{M}_{GL(n,\mathbb{C})}$. A generic fibre of the restricted Hitchin fibration $\CL_{GL(n,\mathbb{C})} \to L$ is diffeormorphic to a product $T \times (\mathbb{Z}_2)^d$ of a torus of real dimension $1 + n^2(g-1)$ and a discrete group $(\mathbb{Z}_2)^d$ for some $d \ge 0$.
\end{prop}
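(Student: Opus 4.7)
The strategy is to exhibit a smooth $f$-invariant spectral curve and invoke Lemma \ref{lem.torusinvol} on its Jacobian. First, I would verify non-emptiness of a generic fixed fibre. By Propositions \ref{prop.fixedspin} and \ref{prop.actionspin} one may fix a theta characteristic $K^{1/2}$ with $f^*(\overline{K}^{1/2}) \simeq K^{1/2}$, so that the discussion preceding the proposition identifies $\iota$-fixed points in $Jac_d(S)$ with $\iota_0$-fixed points in $Jac(S)$. The trivial bundle $\mathcal{O}_S$ is visibly fixed by $\iota_0$, so the fibre is non-empty as soon as a smooth $f$-invariant spectral curve $S$ exists.

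Second, I would construct such an $S$ by taking $a_1 = \cdots = a_{n-1} = 0$ and choosing $a_n \in H^0(\Sigma, K^n)$ an $f$-invariant section with only simple zeros. Then $\eta^n = a_n$ inside the total space of $K$ is smooth: away from the zero set of $a_n$ it is an unramified cover, and at a simple zero of $a_n$ regularity is immediate in local coordinates. To see that such $a_n$ exist, note that the $f$-invariant sections form a real form of $H^0(\Sigma, K^n)$, hence span the complex vector space; since the condition of having only simple zeros is Zariski-open in $H^0(\Sigma, K^n)$ and the real form is Zariski-dense in it, generic $f$-invariant sections qualify.

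Third, for the resulting smooth $f$-invariant $S$, Riemann--Hurwitz applied to the $n$-fold cover $p : S \to \Sigma$ (branching divisor of degree $n(n-1)(2g-2)$) gives $g_S = n^2(g-1) + 1$, so $Jac(S)$ is a complex torus of dimension $g_S$ and $\iota_0$ is an anti-holomorphic group automorphism. Lemma \ref{lem.torusinvol} with $m = g_S$ then yields $Jac(S)^{\iota_0} \simeq T \times (\mathbb{Z}_2)^d$ for a real torus $T$ of dimension $1 + n^2(g-1)$ and some $0 \le d \le g_S$. Translating by $p^* K^{(n-1)/2}$ transports this diffeomorphism to the fibre of $\mathcal{L}_{GL(n,\mathbb{C})} \to L$ over the chosen base point. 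Finally, any line bundle on a smooth spectral curve corresponds to a stable $GL(n,\mathbb{C})$-Higgs bundle, which is a smooth point of $\mathcal{M}_{GL(n,\mathbb{C})}$; hence the constructed fixed points are smooth in the ambient moduli space and Proposition \ref{prop:sub} guarantees that nearby fibres behave in the same way.

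The step I expect to be the main obstacle is the second one: ensuring that the real locus $L$ intersects the Zariski-open stratum of the Hitchin base where $S$ is smooth. Reducing to the explicit family $\eta^n = a_n$ makes this concrete, but one still needs the (standard) real-genericity argument for sections of $K^n$; once this is in place, the remaining steps are essentially organisational, consisting of bookkeeping with $\iota_0$ versus $\iota$ and a direct application of Lemma \ref{lem.torusinvol}.
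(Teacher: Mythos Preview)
Your proposal is correct and follows the same line as the paper: identify the fibre over a smooth point of $L$ with the $\iota_0$-fixed locus in $Jac(S)$ via the twist by $p^*K^{(n-1)/2}$, then invoke Lemma \ref{lem.torusinvol}. The paper's proof is terser and simply asserts that a generic point of $L$ yields a smooth spectral curve; your explicit construction of $\eta^n=a_n$ with $a_n$ a real, simply-vanishing section (together with the Zariski-density remark) supplies the detail the paper leaves implicit, but the underlying argument is the same.
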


\begin{proof}
A generic point in the base $L$ of the restricted Hitchin map $\CL_{GL(n,\mathbb{C})} \to L$ defines a smooth spectral curve $S \to \Sigma$ and the corresponding fibre is diffeomorphic to the fixed point set of the involution $\iota_0 : Jac(S) \to Jac(S)$, where $\iota_0(L) = \tilde{f}^*( \overline{L}^*)$. Thus $\iota_0$ is an anti-holomorphic group automorphism of $Jac(s)$ and we may apply Lemma \ref{lem.torusinvol}.
\end{proof}

Similar methods can be used to show that $\CL_{G_{c}}$ is non-empty for classical complex Lie groups  $G_{c}$. 
For completion, we shall show here how the method applies in the case of $G_{c}=SL(n,\C)$.


\subsection{Spectral data for $SL(n,\mathbb{C})$}\label{sec:spec:sl}
   An $SL(n,\mathbb{C})$-Higgs bundle, for $n\geq 2$, is a pair $(E,\Phi)$ where $E$ is a rank $n$ vector bundle with trivial determinant and $\Phi$ is a trace-free Higgs field. As usual we let $\overline{\partial}_A$ denote the $\overline{\partial}$-operator defining the holomorphic structure on $E$.

The spectral curve $p:S\rightarrow \Sigma$ associated to an $SL(n,\C)$-Higgs pair $(\overline{\partial}_A,\Phi)$  has equation
\begin{equation}
\eta^n + a_2 \eta^{n-2} + a_3 \eta^{n-3} + \cdots + a_n = 0,\label{eq.spec.sl}
\end{equation}
where $a_m$ is a holomorphic section in $H^{0}(\Sigma,K^m)$. From Proposition \ref{ind.inv}, the action of $f$ maps $(\overline{\partial}_A,\Phi) \mapsto f(\overline{\partial}_A,\Phi) = (f^*(\partial_A) , f^*(\Phi^*)).$ Thus, one can define the spectral curve for $f(\overline{\partial}_A,\Phi)$ which has equation ${\rm det}(\eta - f^*(\Phi^{*})) = 0$. 
%
%

From Proposition \ref{prop.eigen.clas}, the spectral curves for $(\overline{\partial}_A,\Phi)$ and $f(\overline{\partial}_A,\Phi)$ coincide if and only if the coefficients $a_m$ in (\ref{eq.spec.sl}) satisfy the reality conditions $a_m = f^*(\overline{a_m})$. In such a case, we have that $\tilde{f}$ restricts to an anti-holomorphic involution on the spectral curve $S$ covering $f$.

From  \cite[Section 2.2]{Hit07}, the spectral data in this case is given by an eigen-line bundle $U$ in the Jacobian of $S$ for which $U\otimes  p^{*}K^{(n-1)/2}$  lies in the Prym variety ${\rm Prym}(S,\Sigma)$. Recall that ${\rm Prym}(S,\Sigma)$ is defined by the exact sequence
\begin{equation*}\xymatrix{
1 \ar[r]& {\rm Prym}(S,\Sigma) \ar[r]& {\rm Jac}(S) \ar[r]^{Nm}& {\rm Jac}(\Sigma) \ar[r]&1,}
\end{equation*}
where the norm map $Nm : {\rm Jac}(S) \to {\rm Jac}(\Sigma)$ is defined by sending a divisor $D = \sum_i s_i$ on $S$ to the divisor $Nm(D) = \sum_i p(s_i)$ on $\Sigma$. Note that ${\rm Prym}(S,\Sigma)$ in this situation is connected, hence a complex torus which is in fact an abelian variety.

Through Proposition \ref{prop:data:classic},   the spectral data associated to the pair $f(\overline{\partial}_A,\Phi)$ is given by  the line bundle 
\begin{eqnarray*}
\tilde{f}^*( \overline{U}^* \otimes p^{*}\overline{K}^{n-1}) = \tilde{f}^*(\overline{U}^*) \otimes p^{*} K^{n-1}.
\end{eqnarray*}
 %
 
To proceed to $SL(n,\mathbb{C})$-bundles we must understand how the fixed point set of $\iota_0$ meets the Prym variety.

\begin{prop} The action of $\iota_0$ on $Jac(S)$ preserves the Prym variety.  \label{prop:iotaPrymSl}
\end{prop}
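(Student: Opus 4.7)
The plan is to introduce the analogous anti-holomorphic involution $\iota_\Sigma : Jac(\Sigma) \to Jac(\Sigma)$ defined by $L \mapsto f^*(\overline{L}^*)$, and then to verify that the norm map intertwines the two involutions:
\begin{equation*}
Nm \circ \iota_0 = \iota_\Sigma \circ Nm.
\end{equation*}
Once this is established, preservation of the Prym variety is immediate: since $\iota_\Sigma$ is a group automorphism it fixes the trivial line bundle, so for any $U \in \ker(Nm) = \Prym(S,\Sigma)$ we have $Nm(\iota_0(U)) = \iota_\Sigma(Nm(U)) = \mathcal{O}_\Sigma$, i.e., $\iota_0(U) \in \Prym(S,\Sigma)$.

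To verify the intertwining I would combine three elementary compatibilities of $Nm$. First, $Nm$ is a group homomorphism, so $Nm(U^*) = Nm(U)^*$. Second, $Nm$ commutes with complex conjugation of line bundles in the sense that $Nm(\overline{U}) = \overline{Nm(U)}$; this is visible directly from the definition of $Nm$ as pushforward of divisors by $p$, since the underlying real structure is unaffected by reversal of the complex structure on the fibres. Third, and the only step using the specific geometry at hand, the defining property $p \circ \tilde{f} = f \circ p$ of the lift $\tilde{f}$ yields
\begin{equation*}
Nm \circ \tilde{f}^* = f^* \circ Nm
\end{equation*}
by inspection on divisor classes. Composing the three identities gives
\begin{equation*}
Nm(\iota_0(U)) = Nm(\tilde{f}^*(\overline{U}^*)) = f^*(Nm(\overline{U}^*)) = f^*(\overline{Nm(U)}^*) = \iota_\Sigma(Nm(U)),
\end{equation*}
as required.

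I do not anticipate a genuine obstacle, as the argument is essentially formal once $\iota_\Sigma$ is introduced. The only point demanding some care is the bookkeeping associated with anti-holomorphic pullback of line bundles, which is cleanest if one views $\tilde{f}$ and $f$ as holomorphic maps into the conjugate Riemann surfaces $\overline{S}$ and $\overline{\Sigma}$; each of the three compatibilities then becomes a standard functoriality property of the norm map applied to the commutative square relating $p$, $\bar p$, $\tilde{f}$ and $f$.
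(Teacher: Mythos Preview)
Your proposal is correct and follows essentially the same strategy as the paper: introduce the corresponding involution $\iota_\Sigma$ (the paper calls it $\iota'_0$) on $Jac(\Sigma)$, establish the intertwining $Nm \circ \iota_0 = \iota_\Sigma \circ Nm$, and conclude that $\ker(Nm)$ is preserved. The only difference is in how the intertwining is checked: the paper observes that $p^* \iota'_0 = \iota_0 p^*$ and then uses that $Nm$ is the dual of $p^*$, whereas you verify the relation directly from the divisor description of $Nm$ via the three elementary compatibilities. Both arguments are equally short and valid.
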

\begin{proof}
Recall that the norm map $Nm $ may also be defined as the dual map of the pullback $p^* : Jac(\Sigma) \to Jac(S)$. Let $\iota'_0 : Jac(\Sigma) \to Jac(\Sigma)$ be defined by $\iota'_0(L) = f^*(\overline{L}^*)$, then clearly $p^* \iota'_0 = \iota_0 p^* : Jac(\Sigma) \to Jac(S)$. It follows that $\iota'_0 \circ Nm = Nm \circ \iota_0$, in particular $\iota_0$ preserves the Prym variety, which is given by the kernel of $Nm$.
\end{proof}

Applying Lemma \ref{lem.torusinvol} we obtain the following:
\begin{prop}\label{prop:slfib}
The generic fibres of the restricted Hitchin fibration $\CL_{SL(n,\mathbb{C})} \to L$ are products $T \times (\mathbb{Z}_2)^d$ of a real torus $T$ of dimension $(n^2-1)(g-1)$ and a discrete group $(\mathbb{Z}_2)^d$, where the number of points of order $2$ in ${\rm Prym}(S,\Sigma)$ fixed by $\iota_0$ is $2^{(n^2-1)(g-1) + d}$.
\end{prop}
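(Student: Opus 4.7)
The plan is to parallel the argument used in the $GL(n,\mathbb{C})$ case in Proposition \ref{prop:existence.gl}, now working inside the Prym variety. First I would fix a generic point $b \in L$ corresponding to a smooth spectral curve $p : S \to \Sigma$ with equation (\ref{eq.spec.sl}), and choose an $f$-invariant theta characteristic $K^{1/2}$, whose existence is guaranteed by Propositions \ref{prop.fixedspin} and \ref{prop.actionspin}. Using the description recalled from \cite{Hit07}, the fibre $h^{-1}(b) \subset \CM_{SL(n,\mathbb{C})}$ consists of eigen-line bundles $U \in {\rm Jac}(S)$ with $U \otimes p^*K^{(n-1)/2} \in {\rm Prym}(S,\Sigma)$, so the translation $V \mapsto V \otimes p^*K^{(n-1)/2}$ gives a bijective identification of ${\rm Prym}(S,\Sigma)$ with this fibre.

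Next I would verify that, under this identification, the involution induced by $f$ on $h^{-1}(b)$ corresponds precisely to $\iota_0|_{{\rm Prym}(S,\Sigma)}$. The $f$-invariance of $K^{1/2}$ gives $\tilde f^*(p^*\overline{K}^{1/2}) \simeq p^*K^{1/2}$, and combining this with the formula $\iota(U) = \tilde f^*(\overline U^*) \otimes p^*K^{n-1}$ from Definition \ref{def:iota} yields the intertwining identity
\begin{equation*}
\iota\bigl(V \otimes p^*K^{(n-1)/2}\bigr) = \iota_0(V) \otimes p^*K^{(n-1)/2}.
\end{equation*}
Proposition \ref{prop:iotaPrymSl} ensures that the right-hand side still lies in the image of ${\rm Prym}(S,\Sigma)$ under the same translation, so $\iota$ on the fibre really does correspond to $\iota_0$ on the Prym.

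Finally, I would apply Lemma \ref{lem.torusinvol} to $A = {\rm Prym}(S,\Sigma)$ with $\theta = \iota_0|_{{\rm Prym}(S,\Sigma)}$. By Riemann--Hurwitz the smooth spectral curve has genus $g_S = n^2(g-1)+1$, so ${\rm Prym}(S,\Sigma)$ is a complex torus of dimension $m = g_S - g = (n^2-1)(g-1)$; and $\theta$ is an anti-holomorphic group automorphism by Proposition \ref{prop:iotaPrymSl}. The lemma then immediately produces the asserted decomposition $T \times (\mathbb{Z}_2)^d$ with $T$ a real torus of dimension $(n^2-1)(g-1)$, and the count $2^{(n^2-1)(g-1)+d}$ for the $\theta$-fixed points of order $2$.

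The only delicate step is the intertwining identity, which depends on carefully tracking the action of $\tilde f^*$ on pullbacks of powers of an $f$-invariant theta characteristic; once this is in place, the remainder of the proof is a direct application of Lemma \ref{lem.torusinvol} to the Prym variety.
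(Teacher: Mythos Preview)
Your proposal is correct and follows essentially the same route as the paper: the paper's proof is literally the one line ``Applying Lemma \ref{lem.torusinvol}'', with the identification of the fibre with ${\rm Prym}(S,\Sigma)$, the intertwining identity $\iota(M\otimes p^*K^{(n-1)/2})=\iota_0(M)\otimes p^*K^{(n-1)/2}$, and Proposition \ref{prop:iotaPrymSl} all having been set up in the text immediately preceding the statement. Your write-up simply makes these ingredients explicit (and adds the Riemann--Hurwitz computation of $\dim{\rm Prym}(S,\Sigma)$), so there is nothing to correct beyond a small bookkeeping slip in the direction of the translation between the fibre and the Prym.
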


The above study establishes the existence of fixed points for the action of $f$ on the moduli space of $SL(n,\mathbb{C})$-Higgs bundles. Moreover, it  confirms the picture given in Section \ref{sec:hitFib}, where the fixed point set is shown to be a real integrable system. 



\begin{rem}
One should note that the moduli space of $SU(p,p)$-Higgs bundles can be seen inside the moduli space of $SL(2p,\mathbb{C})$-Higgs bundles. In particular from \cite[Chapter 6]{thesis}, the spectral data for this subspace corresponds to line bundles $L$ of fixed determinant which are preserved by the natural involution $\sigma:\eta \mapsto -\eta$ on the corresponding spectral curve. Hence, by looking at the relation between $\sigma$ and $\tilde f$ one can understand the fixed point set of $f$ in the moduli space $\mathcal{M}_{SU(p,p)}$ ( equivalently, in $\mathcal{M}_{U(p,p)}$).
\end{rem}

Applying the same methods for the case of  $G_{c}=PGL(n,\C), Sp(2n,\C), SO(2n+1,\C), $ and $SO(2n,\C)$, one can see through the spectral data introduced in \cite{Hit07}  that the fixed point set $\CL_{G_{c}}$ is non-empty and contains smooth points. Moreover the generic fibre of the Hitchin map is the set of real points in an abelian variety, diffeomorphic to $2^d$ torus components for some $d$. In the following section, we shall study the topology of the $(A,B,A)$-branes $\CL_{G_{c}}$.
 %


\section{Connectivity of the brane $\CL_{G_{c}}$}\label{sec:conn}\label{sec:con}

In order to study connectivity of the $(A,B,A)$-brane $\CL_{G_{c}}$ in the moduli space $\CM_{G_{c}}$ of $G_{c}$-Higgs bundles, we consider the fibres of the restricted Hitchin fibration
\[h|_{\CL_{G_{c}}}: \CL_{G_{c}}\rightarrow L\]
In particular, we shall see that the number of connected components of the fibres depends on the invariants $(n,a)$ associated to the Riemann surface and the spectral curves, as introduced in Section \ref{sec:classif} following \cite{GH81}. 


\subsection{Connectivity for classical Higgs bundles}\label{subsec:concla}

\begin{proposition}\label{prop:numbergl}
Let $S \to \Sigma$ be a smooth $GL(n,\C)$ spectral curve, $n_S$  the number of fixed point components of the lifted involution $\tilde{f} : S \to S$, and $g_S = 1 + n^2(g-1)$ the genus of $S$. Then the number of connected components of the corresponding fibre of $$h|_{\CL_{GL(n,\C)}}: \CL_{GL(n,\C)}\rightarrow L$$ is $2^{n_S-1}$ if $n_S > 0$. If $n_S = 0$  the number of components is $1$ if $g_S$ is even, and $2$ if $g_S$ is odd.\end{proposition}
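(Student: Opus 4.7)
The plan is to reduce the count to the classical problem of counting the components of the real locus of a Jacobian of a real curve. For $S\to\Sigma$ a smooth $GL(n,\mathbb{C})$-spectral curve as in Section \ref{sec:spectralgln}, the fibre of $h|_{\mathcal{L}_{GL(n,\mathbb{C})}}$ over the corresponding point of $L$ is the fixed set of $\iota$ on $Jac_d(S)$ for $d=n(n-1)(g-1)$. The $f$-invariant theta characteristic $K^{1/2}$ provided by Propositions \ref{prop.fixedspin} and \ref{prop.actionspin} makes tensoring with $p^*K^{(n-1)/2}$ into an equivariant isomorphism $Jac(S)\to Jac_d(S)$ intertwining $\iota$ with $\iota_0(U)=\tilde f^*(\overline U^*)$, so the fibre is identified with $\mathrm{Fix}(\iota_0)\subset Jac(S)$.

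Next, by Lemma \ref{lem.torusinvol} one has $\mathrm{Fix}(\iota_0)\cong T\times(\mathbb{Z}_2)^d$ with $T$ a real torus of dimension $g_S$, and $d$ is pinned down by $|\mathrm{Fix}(\iota_0)\cap Jac(S)[2]|=2^{g_S+d}$. On the 2-torsion $Jac(S)[2]\cong H^1(S,\mathbb{Z}_2)$ both inversion and complex conjugation act trivially, so $\iota_0$ restricts to the topological pullback $\tilde f^*$. The number of components is therefore $2^{k-g_S}$, where $k:=\dim_{\mathbb{Z}_2}H^1(S,\mathbb{Z}_2)^{\tilde f^*}$, and the proof reduces to computing $k$ in terms of $n_S$ and (when $n_S=0$) of $g_S$.

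For $n_S>0$ I would apply Smith theory to the $\mathbb{Z}_2$-action of $\tilde f$ on $S$, together with the description of $(S,\tilde f)$ as the orientation double cover of a surface with $n_S$ boundary circles recalled in Section \ref{sec:classif}, to obtain $k=g_S+n_S-1$. Equivalently, this is the classical Weichold--Gross-Harris count \cite{GH81} of real components of the Jacobian of a real curve with $n_S$ real circles, giving $2^{n_S-1}$. For $n_S=0$, $\tilde f$ acts freely and $S/\tilde f$ is a closed non-orientable surface of non-orientable genus $1+g_S$; the parity of this non-orientable genus forces $k=g_S$ or $k=g_S+1$, and this yields the $1$ or $2$ components corresponding respectively to $g_S$ even or odd.

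The main obstacle is the $n_S=0$ case, where the bifurcation on the parity of $g_S$ cannot be read off from any non-empty fixed set but must instead be extracted from the structure of the double cover $S\to S/\tilde f$. Concretely, I would analyse the transfer/Gysin sequence in $\mathbb{Z}_2$-cohomology for this cover, or directly invoke Weichold's classification of real structures, to decide whether the pullback of the orientation class of $S/\tilde f$ contributes a $\tilde f^*$-invariant class beyond those detected by the free-action part, which is exactly the discrepancy distinguishing the two subcases.
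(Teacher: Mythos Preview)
Your proposal is correct and follows essentially the same route as the paper: identify the fibre with $\mathrm{Fix}(\iota_0)\subset Jac(S)$, use Lemma \ref{lem.torusinvol} to reduce the component count to the $\tilde f^*$-invariants in the $2$-torsion $H^1(S,\mathbb{Z}_2)$, and then invoke the classical Gross--Harris/Weichold count \cite{GH81}. The only cosmetic difference is that the paper phrases the $2$-torsion step as passing from $\iota_0$ to the involution $\theta(L)=\tilde f^*(\overline L)$ and then quotes \cite[Propositions 3.2--3.3]{GH81} directly, whereas you sketch the underlying Smith-theory/transfer computation; the content is the same.
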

 
\begin{proof} 
Recall that  the involution $\iota_0 : Jac(S) \to Jac(S)$ is given by $\iota_0(L) = \tilde{f}^*(\overline{L}^*)$, and that the fibre of the Hitchin map corresponding to the spectral curve $S$ is given by the fixed point set of $\iota_0$. Thus we must determine the number of components of the fixed point set of $\iota_0$. From Lemma \ref{lem.torusinvol} we see that the number of components of the fixed point set of $\iota_0$ is the same as the number of components of the involution $\theta : Jac(S) \to Jac(S)$, where $\theta(L) = \tilde{f}^*(\overline{L})$. The number of fixed points of the involution $\theta$ is studied in \cite[Propositions 3.2-3.3]{GH81}, where it is determined that the number of components is $2^{n_S-1}$ if $n_S > 0$. In the case that $n_S = 0$ the number of fixed points is shown to be $1$ or $2$ depending on whether the genus of the curve is even or odd.
\end{proof}

It remains to determine how $n_S$ depends on the pair $(\Sigma , f)$ and the coefficients $a_i \in H^0(\Sigma , K^i)$ defining the spectral curve $S$. As the rank $n$ increases this quickly becomes difficult so we restrict attention to the rank $2$ case in Section \ref{sec:rank2case}.


The geometry and topology of $\CL_{G_{c}}$ for other groups can  also be  studied through the spectral data associated to elements in $\CL_{G_{c}}$. For this, one has to consider the spectral data for $G_{c}$-Higgs bundles which is fixed by the induced involution, bearing in mind the study of the action done in Section \ref{sec:indprin}. 
 In the following subsection we  look at an example of how the invariants $(n,a)$ can be obtained for $GL(2,\C)$ and $SL(2,\C)$-Higgs bundles.

\begin{remark}
In the case of  Accola-Maclachlan  and Kulkarni surfaces, the invariants $(n,a)$ are determined in \cite{acc} for any genus $g$. Thus, the study of the invariants for the ramified covering $S$ in this case gets simplified, and connectivity of $\CL_{GL(n,\C)}$  for these Riemann surfaces can be determined through \cite{GH81}. For hyperelliptic surfaces, a study of the invariants associated to $f$ has been done in \cite{acc2}, and extended to our case in  Appendix \ref{sec:clas}.
\end{remark}


\subsection{Rank $2$-case}\label{sec:rank2case}

In order to understand the invariants which characterise the connected components of the real points in the fibres of the classical Hitchin fibration for any Riemann surface, i.e., real points in ${\rm Jac}(S)$, we shall consider rank $2$ Higgs bundles. In this case the Hitchin fibration is given by
\[h:(E,\Phi)\mapsto (a_{1},a_{2})\in H^{0}(\Sigma, K)\oplus H^{0}(\Sigma, K^{2}),\]
and the corresponding 2-fold cover $p:S\rightarrow \Sigma$ has genus $g_{S}=1+4(g-1)$ and equation
$\eta^{2}+a_{1}\eta+a_{2}=0.$ 
Since we can re-write this equation as $(\eta + a_1/2)^2 + (a_2 - a_1^2/4) = 0$ there will be no loss in generality in assuming $a_1 = 0$. The spectral curve $S$ is therefore a double cover given by
\begin{eqnarray*}S= \{ (\eta, z) \in K ~|~ \eta^{2}=q(z)\}\end{eqnarray*}
for $q = -a_2 \in H^{0}(\Sigma, K^{2})$. Generically $q$ has $4g-4$ different zeros, which give the ramification points of the smooth curve $S$. The reality condition on the defining equation for $S$ is simply that $f^*(\overline{q}) = q$, and  in particular $f$ must act on the zero set of $q$.

The anti-linear involution $f$ on the Riemann surface $\Sigma$ induces an involution $\tilde{f}:S\rightarrow  S$, which from Section \ref{sec:hitFib} acts by 
\begin{eqnarray}\begin{aligned}
 (\eta,z)\mapsto  (\overline{f^{*}( \eta)}, f(z))
\end{aligned}\label{invodefi}
\end{eqnarray}
The study of the invariant $n_{S}$ depends on how $f$ acts on the zero set of $q$. First note that since $\tilde{f}$ covers $f$, any fixed component of $\tilde{f}$ must lie over a fixed component of $f$. Therefore to determine $n_S$ we need only determine how many fixed components of $\tilde{f}$ lie over each fixed component of $f$. Given a fixed component of $f$ there are two cases to consider depending on whether or not $q$ has zeros along the component.

\begin{proposition}\label{prop:nozeros}
If $S^{1}$ is a fixed circle component of $f$ in $\Sigma$ and the differential $q$ does not have any zeros on $S^{1}$, then $p^{-1}(S^{1})=S^{1}\cup S^{1}$ and
\begin{itemize}
\item the two $S^{1}$ factors in $S$ are fixed by $\tilde f$ if $q_{1}>0$ on $S^{1}$, 
\item the two $S^{1}$ factors in $S$ are exchanged by $\tilde f$ if $q_{1}<0$ on $S^{1}$.
\end{itemize}
Moreover, the differential may be written in a local coordinate $z$ as $q(z)=q_{1}(z)(dz)^{2}$, where $f(z) = \overline{z}$ and $q_1(\overline{z}) = \overline{q_1(z)}$. 
\end{proposition}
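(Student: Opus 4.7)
The plan is to reduce everything to an explicit computation in local real-analytic coordinates centered on the fixed circle, once one has understood the action of $\tilde{f}$ on the total space of $K$ in those coordinates.

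First I would establish the local form of $q$. As recalled in Section \ref{sec:anti-inv}, near any fixed point of $f$ there exists a holomorphic coordinate $z$ in which $f(z)=\bar z$, and these coordinates extend along a tubular neighborhood of the fixed circle $S^1$. In such a chart, write $q=q_1(z)(dz)^2$. The reality condition from Proposition \ref{prop.eigen.clas}, namely $q=f^*(\bar q)$, unpacks as follows: since $f^*(d\bar z|_z)=dz|_z$ (because $f^*\bar z=z$), one gets $f^*(\bar q)=\overline{q_1(\bar z)}\,(dz)^2$, so the reality condition amounts to $q_1(\bar z)=\overline{q_1(z)}$. In particular, on the fixed set $\{z=\bar z\}$ the function $q_1$ is real-valued; since $q_1$ is continuous, real, and non-vanishing on the connected circle $S^1$, it has constant sign there.

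Next I would compute $\tilde{f}$ in these coordinates. Representing a point of $K$ as a pair $(z,c)$ meaning $c\cdot dz|_z\in K_z$, Definition \ref{def.tilde.f} gives
\begin{equation*}
\tilde{f}(c\cdot dz|_z)\;=\;f^*(\overline{c\cdot dz|_z})\;=\;\bar c\cdot f^*(d\bar z|_z)\;=\;\bar c\cdot dz|_{\bar z},
\end{equation*}
so in coordinates $\tilde f(z,c)=(\bar z,\bar c)$. Restricted to the spectral curve $S=\{c^2=q_1(z)\}$, it is clear from the reality condition that this restriction is well-defined (exactly as noted after Proposition \ref{prop.eigen.clas}).

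Finally I would analyze $p^{-1}(S^1)$ using $c^2=q_1(z)$ for $z\in S^1$. Because $q_1$ has constant sign on $S^1$:
\begin{itemize}
\item If $q_1>0$, the two roots $c=\pm\sqrt{q_1(z)}$ are real and give two continuous, disjoint real sections of $p$ over $S^1$, so $p^{-1}(S^1)=S^1\sqcup S^1$; moreover each section is pointwise fixed by $(z,c)\mapsto(z,\bar c)=(z,c)$.
\item If $q_1<0$, the two roots $c=\pm i\sqrt{-q_1(z)}$ are purely imaginary, again giving two disjoint circles over $S^1$, and the involution $(z,c)\mapsto(z,\bar c)=(z,-c)$ exchanges them.
\end{itemize}

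The only subtle point is the coordinate expression for $\tilde{f}$; once that is in hand the rest is a direct case analysis of signs. A minor technical remark is that the local coordinates need not be global on the tubular neighborhood of $S^1$, but the sign of $q_1$ along $S^1$ is a well-defined global quantity because in any overlapping chart the transition multiplies $q_1$ by $|h'(z)|^2$ for a real $h$ restricted to the fixed locus, which is positive.
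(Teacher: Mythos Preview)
Your approach is essentially the same as the paper's: work in a local coordinate $z$ with $f(z)=\bar z$, write $q=q_1(z)(dz)^2$, deduce the reality condition on $q_1$, compute $\tilde f$ in coordinates, and read off whether the two sheets over a real point are fixed or swapped according to the sign of $q_1$.

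There is, however, one genuine step you skip. You assert that when $q_1>0$ the two roots $c=\pm\sqrt{q_1(z)}$ define two continuous sections of $p$ over \emph{all} of $S^1$, hence $p^{-1}(S^1)=S^1\sqcup S^1$; and similarly when $q_1<0$. Your closing remark only shows that the \emph{sign} of $q_1$ is globally well-defined along $S^1$ (transitions multiply by $(h')^2>0$). It does not show that a choice of square root is globally consistent: under a transition $w=h(z)$ with $h$ real on the fixed locus one has $c'=c/h'$, so if $h'<0$ somewhere the ``positive'' branch in one chart becomes the ``negative'' branch in the next. Equivalently, you have not ruled out the possibility that $p^{-1}(S^1)\to S^1$ is the nontrivial double cover.

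The paper closes exactly this gap: it introduces the real subbundle $K|_{S^1}(\mathbb{R})$ (the $\tilde f$-fixed covectors) and observes that it is trivial because the tangent vector field $X$ along $S^1$ gives a nowhere-vanishing real frame. Relative to such a global real frame, $q$ restricts to a nowhere-vanishing real-valued function on $S^1$, and now $\pm\sqrt{q}$ (respectively $\pm i\sqrt{-q}$) are honestly global sections, forcing the double cover to be trivial. If you insert this observation --- that $K|_{S^1}(\mathbb{R})\cong T^*S^1$ is trivial --- your argument becomes complete and matches the paper's.
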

\begin{proof}
Let $x$ be a fixed point of $f$ such that $q(x)\neq 0$. By choosing an appropriate  local holomorphic coordinate $z$, as seen before we can write $f(z)=\bar z$ in a neighbourhood of $x$, corresponding to $z=0$. In local coordinates   the differential can be expressed as 
\[q(z)=q_{1}(z)(dz)^{2}~{~\rm~for~}~ q_{1}(0)\neq 0,\]
 and by rescaling $z$ by a real factor if necessary, we may assume that $q_{1}(0)=\pm1$. The curve $S$ is thus given by
 \[\eta^{2}=q_1(z)(dz)^{2},\]
 and so at $x$, i.e., $z=0$, we have
 \begin{eqnarray*}
 \eta=\left\{
 \begin{array}
 {l l}
 \pm dz& q_{1}(0)=1;\\
  \pm i ~dz& q_{1}(0)=-1.
 \end{array}
 \right.
  \end{eqnarray*}
 In the case of $q_{1}(0)=1$ the points lying over $x$ are $(\pm dz, 0)$, and by (\ref{invodefi}) they  are both fixed by $\tilde{f}$.
 In the case of $q_{1}(0)=-1$ the points over $x$ are $(\pm i~dz, 0)$, which are interchanged by $\tilde f$.
 
Consider now $S^{1}\subset \Sigma$ a component of the fixed points of $f$ in $\Sigma$ which does not contain zeros of $q$. As above, for $x\in S^{1}$ we can choose a  holomorphic local coordinate $z$ such that $f(z)=\bar z$, and such that the point $x$ corresponds to $z=0$. 
 For any other such coordinate $w$, one can write
 \[w=a_{0}z+a_{1}z^{2}+ \ldots ~{\rm~for~}~ a_{0}\neq 0, ~a_{i} \in \R.\]
  Then $dw|_{x}=a_{0}dz|_x$,  and this defines a real subbundle $K|_{S^1}(\mathbb{R})$ of the restriction $K|_{S^1}$ of $K$ to the fixed circle.
Note that since  we can find a non-vanishing vector field $X \in T\Sigma|_{S^1}$ tangent to $S^1$, the subbundle $K|_{S^1}(\mathbb{R})$ is trivial.  Therefore, since $K|_{S^1}(\mathbb{R})$ is a trivial real bundle and $\pm \eta$ are non-vanishing along the circle, then the double cover $p^{-1}(S^1) \to S^1$ defined by $\eta^2 = q$ must be a trivial double cover $p^{-1}(S^{1})=S^{1}\cup S^{1}$.
\end{proof}
 
We shall now consider the case of a fixed circle $S^{1}\subset \Sigma$ which contains one or more zeros of the quadratic differential $q$.  Let $x\in S^{1}$ such that $q(x)=0$, and choosing local holomorphic coordinates as before we set $f(z)=\bar z$, and $z=0$ for the point $x$.  As in the above proof, we can write
$q(z)=q_{1}(z)(dz)^{2},$
where $q_{1}(\bar z )=\overline{q_{1}(z)}$ and $q_{1}(0)=0$. Further, since we are assuming that $q$ has only simple zeros we have $dq_{1}(z)\neq 0$ at $z=0$. Hence, we can set $q_{1}(z)$ as a local real coordinate
$w=q_{1}(z).$
 In terms of this coordinate on $\Sigma$, the double cover 
$
p:S\rightarrow \Sigma $ given by $ (\eta dz,z)\mapsto z$ can be written as
 \[\eta \mapsto \eta^{2}= q_{1}(z)=w.\]
 Recall that $\tilde f(\eta)=\bar \eta$ and $f (w) =\bar w$. If $\eta$ is real one has $\tilde f (\eta)=\eta$ and if $\eta$ is imaginary we get $\tilde{f}(\eta)=-\eta$. Locally the fixed points of $f$ are the points where $w$ is real and so 
 \begin{itemize}
 \item for $q_1 = w<0$, we have that $\pm \eta \in i\R$, and $\tilde{f}$ exchanges $\eta$ and $-\eta$,
 \item for $q_1 = w>0$, we have that $\pm \eta \in \R$, and $\tilde{f}$ fixes $\pm \eta$.
 \end{itemize}
 From the above analysis one has the following proposition:

\begin{proposition}\label{prop:graph}
Let $S^1 \subset \Sigma$ be a fixed component of $f$ on which $q$ has at least one zero. Then $q$ has an even number $2k$ of zeros which divides the circle into $2k$ segments over which we have $q \ge 0$ and $q \le 0$ alternately. For each of the $k$ segments where $q \ge 0$, the inverse image of the segment in $S$ is a circle fixed by $\tilde{f}$. For each of the $k$ segments where $q \le 0$, the inverse image of the segment is a circle such that $\tilde{f}$ acts as a reflection. In particular the inverse image $p^{-1}(S)$ contains exactly $k$ circles fixed by $\tilde{f}$.
\end{proposition}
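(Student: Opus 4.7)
The plan is to combine the local analysis at a zero of $q$, carried out immediately before the statement, with a global sign-counting argument along the fixed circle $S^1$. Using the tangent field trivialization of $K|_{S^1}(\R)$ from the proof of Proposition \ref{prop:nozeros} and squaring, one obtains a trivialization of $K^2|_{S^1}(\R)$; combined with the reality condition $f^*(\overline{q}) = q$ and $f|_{S^1} = \mathrm{id}$, this identifies $q|_{S^1}$ with a real-valued smooth function on the circle. Since $q$ is assumed to have only simple zeros, each zero on $S^1$ is a sign change, so the number of zeros on $S^1$ is even, $2k$, and the complementary open arcs carry alternating signs of $q$.

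Next I would describe the preimage of each closed segment under $p$. On the interior of a segment the two solutions of $\eta^2 = q$ are $\eta = \pm\sqrt{q}$ when $q > 0$ and $\eta = \pm i\sqrt{-q}$ when $q < 0$. At each endpoint the local normal form $\eta^2 = w$ with $w$ a real coordinate on $\Sigma$ exhibits the two lifts from the interior as tending to a common smooth branch point $\eta = 0$ of $S$, and shows that the arc in $S$ swept out by letting $\eta$ pass through $0$ along the real or imaginary axis is smooth when parametrized by $\eta$. Gluing the two interior arcs of the double cover over a closed segment at their two common ramification endpoints therefore produces a smoothly embedded circle in $S$ sitting over the segment.

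Finally, I would read off the action of $\tilde{f}(\eta,z) = (\overline{\eta},z)$ on each such circle. On the circle over a positive segment $\eta$ is real, so $\tilde{f}$ acts as the identity and the circle is pointwise fixed. On the circle over a negative segment $\eta$ is purely imaginary, so $\tilde{f}$ sends $\eta$ to $-\eta$, exchanging the two constituent arcs while fixing the two ramification endpoints; this is a reflection with two fixed points. Summing, the $k$ positive segments contribute $k$ pointwise fixed circles and the $k$ negative segments contribute $k$ reflected circles, giving exactly $k$ pointwise fixed circles in $p^{-1}(S^1)$. I expect the most delicate point to be checking that the preimage of a closed segment is genuinely a smooth circle rather than two arcs meeting at corners; this follows from the local normal form together with the fact that the ramification points are smooth points of $S$, so that the two arcs join into a single smooth loop across each branch point.
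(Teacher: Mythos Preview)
Your proposal is correct and follows the same approach as the paper: the paper's argument consists precisely of the local analysis immediately preceding the statement (the normal form $\eta^2=w$ with $\tilde f(\eta)=\bar\eta$ over a real zero of $q$), and then simply asserts that the proposition follows. Your write-up makes explicit the global sign-counting along $S^1$ and the fact that the two sheets over a closed segment glue to a smooth circle at the ramification endpoints, points which the paper leaves to the reader; otherwise the content and method are the same.
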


It is possible to strengthen this result to a precise description of $S$ in a neighborhood of the inverse image $p^{-1}(S^1)$. The inverse image $p^{-1}(S^1)$ is the graph obtained by taking $2k$ circles and joining them in a chain which has Euler characteristic $2k-4k = -2k$. We can enlarge each circle of $p^{-1}(S^1)$ to a tubular neighbourhood in $S$ such that their union is a neighbourhood of $p^{-1}(S^1)$ diffeomorphic to a surface of genus $k-1$ with $4$ points removed. Combining Propositions \ref{prop:nozeros} and \ref{prop:graph} we obtain:

\begin{thm}\label{thm:numbergl2}
Let $q \in H^0(\Sigma , K^2)$ be a quadratic differential which has only simple zeros, and such that $f^*(\overline{q}) = q$, and  let $S$ be the spectral curve given by $\eta^2 = q$. Further, let $n_+$ be the number of fixed components of $f$ on which $q$ is non-vanishing and positive, and $u$ the number of zeros of $q$ which are fixed by $f$. Then the  number of components of the induced involution $\iota_0 : Jac(S) \to Jac(S)$ is $2^d$, where $d = 2n_+ + u/2 - 1$ if $2n_+ + u/2 > 0$, and $d = 1$ otherwise.
\end{thm}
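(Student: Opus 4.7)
The plan is to reduce the problem to Proposition \ref{prop:numbergl} by computing $n_S$, the number of fixed circle components of $\tilde{f}$ on $S$, explicitly in terms of the data of $(\Sigma,f,q)$. Once $n_S$ is known, Proposition \ref{prop:numbergl} gives the number of components of the fixed point set of $\iota_0$ on $\mathrm{Jac}(S)$ immediately.

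The first step is to partition the contributions to $n_S$ according to the fixed circles of $f$ on $\Sigma$. Since $p \circ \tilde{f} = f \circ p$, any fixed circle of $\tilde{f}$ must project to a fixed circle of $f$, so $n_S$ is the sum of contributions coming from each fixed circle $S^1 \subset \Sigma$ of $f$. Because the fixed locus of $f$ is a disjoint union of circles, every zero of $q$ that is fixed by $f$ lies on some fixed circle; thus the integer $u$ in the statement is precisely the total number of zeros of $q$ lying on fixed circles of $f$. On a given fixed circle $S^1$ the number of zeros of $q$ is necessarily even (by Proposition \ref{prop:graph}), so writing $2k_i$ for the number of zeros on the $i$-th fixed circle with zeros, we have $u = \sum_i 2 k_i$.

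The second step is to apply Propositions \ref{prop:nozeros} and \ref{prop:graph} circle by circle. A zero-free fixed circle contributes $2$ to $n_S$ if $q>0$ along it and $0$ if $q<0$ along it (Proposition \ref{prop:nozeros}). A fixed circle carrying $2k_i$ zeros contributes exactly $k_i$ fixed circles of $\tilde{f}$ (Proposition \ref{prop:graph}). Summing these contributions gives
\begin{equation*}
n_S \;=\; 2 n_+ \;+\; \sum_i k_i \;=\; 2 n_+ + \tfrac{u}{2}.
\end{equation*}

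The third step is to plug into Proposition \ref{prop:numbergl}. If $n_S > 0$, i.e.\ $2n_+ + u/2 > 0$, the number of components is $2^{n_S - 1} = 2^{2n_+ + u/2 - 1}$, matching the claim with $d = 2n_+ + u/2 - 1$. The only remaining case is $n_S = 0$, i.e.\ $n_+ = 0$ and $u = 0$. Here the genus of $S$ is $g_S = 1 + 4(g-1) = 4g - 3$, which is odd for every $g$, so Proposition \ref{prop:numbergl} yields $2$ components, i.e.\ $2^1$, giving $d = 1$ as asserted.

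I do not anticipate a significant obstacle: the proof is a bookkeeping combination of the two preceding propositions. The one point requiring care is the $n_S=0$ case, where one must observe that $g_S$ is always odd in the rank $2$ setting so that the fall-back in Proposition \ref{prop:numbergl} consistently yields $2 = 2^1$ components, matching the uniform formula $2^d$ with $d = 1$. One should also note that the assumption that $q$ has only simple zeros, together with $f^*(\overline{q}) = q$, is exactly what is needed to invoke Propositions \ref{prop:nozeros} and \ref{prop:graph} in the form stated.
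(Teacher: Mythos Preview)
Your proof is correct and is exactly the argument the paper intends: the paper states the theorem immediately after the phrase ``Combining Propositions \ref{prop:nozeros} and \ref{prop:graph} we obtain'' without further elaboration, and your write-up fleshes out precisely that combination together with the appeal to Proposition \ref{prop:numbergl}. Your handling of the edge case $n_S=0$ (observing $g_S=4g-3$ is always odd so Proposition \ref{prop:numbergl} gives $2=2^1$ components) is the one detail the paper leaves implicit, and you have supplied it correctly.
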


It is natural to ask which pairs $(n_+,u/2)$ can occur for a given pair $(\Sigma , f)$ with associated invariants $(n,a)$. Note that in particular one has $0 \le n_+ \le n$ and $0 \le u/2 \le 2g-2$. Moreover if $u > 0$ then $n_+ < n$, and if $n = 0$ then $u = 0$. Even with these constraints there are a large number of possibilities that can occur. For example in the genus $g=2$ case the invariants $(n,a,n_+,u/2)$ subject to the above constraints together with the constraints on $(n,a)$ given by Proposition \ref{pro:topcond} give rise to a total of $26$ possible cases. To determine which of these possible cases actually occur one can use the explicit description of the space of quadratic differentials on a genus $2$ hyperelliptic curve given in Appendix \ref{sec:hyper}. Remarkably it turns out that all but one of the $26$ cases can actually be realized, the exception being the case $(n,a,n_+,u/2) = (1,0,0,1)$.


%
%
%
%
%
%
%
%

\subsection{Connectivity for $SL(2,\C)$-Higgs bundles}\label{sec:connsl2}
As mentioned before, in the case of $SL(2,\C)$-Higgs bundles the fibres of the Hitchin fibration are Prym varieties ${\rm Prym}(S,\Sigma)$, for $S$ the spectral curve defined by the point in the Hitchin base. Hence, in this case one is interested in the number of connected components of the intersection of ${\rm Prym}(S,\Sigma)$ with the real points in the Jacobian of $S$. 

Let $q \in H^0(\Sigma , K^2)$ be a quadratic differential with simple zeros and satisfying $f^*(q) = \overline{q}$. In particular $f$ preserves set of $4g-4$ branch points of $q$. For $p : S \to \Sigma$  the branched double cover of degree $2$ defined the equation $\eta^2 = q$,  its genus is $g_{S}=1 + 4(g-1)$. The natural action of $f$ on $K$ defines a lift $\tilde{f} : S \to S$ of $f$. Denote by $\sigma : S \to S$ the involution which exchanges sheets, then $\tilde{f}$ and $\sigma$ commute.

Since $S \to \Sigma$ is a double cover, the Prym variety ${\rm Prym}(S,\Sigma)$ may be defined as those line bundles $L$ satisfying $\sigma^* L \simeq L^*$,  and is a complex torus of dimension $3g-3$. We are interested in the set of elements $L \in {\rm Prym}(S,\Sigma)$ which satisfy the reality condition $\tilde{f}^*(\overline{L}^*) = L$. Since this set is the product of a real torus of dimension $3g-3$ and a discrete group $(\mathbb{Z}_2)^d$, the fixed point set has $2^d$ components for some value of $d$.
The strategy we will employ to find $d$ is to look for points of order $2$ in ${\rm Prym}(S,\Sigma)$ satisfying the reality condition. Let $P$ denote the set of such points, which is a subgroup of ${\rm Prym}(S,\Sigma)$. From Lemma \ref{lem.torusinvol},  the set $P$ is isomorphic to $(\mathbb{Z}_2)^{3g-3+d}$, and thus it will suffice to determine the order of $P$.

By construction, elements of $P$ are line bundles $L$ on $S$ satisfying the following three conditions: $L^2$ is trivial, $\sigma^*(L) = L$ and $\tilde{f}^*(\overline{L}) = L$. Since $L$ has order $2$, it is equivalent to view $L$ as a flat line bundle with transition functions in $\mathbb{Z}_2$. Then as flat line bundles $L \simeq \overline{L} \simeq L^*$. Thus the set $P$ corresponds to elements of $H^1(S , \mathbb{Z}_2)$ which are invariant under the actions of $\sigma$ and $\tilde{f}$. Let $H^1(S,\mathbb{Z}_2)^\sigma$ denote the subgroup of elements of $H^1(S,\mathbb{Z}_2)$ fixed by $\sigma$. We will first determine this group and then find the subgroup of elements fixed by $\tilde{f}$.

Let $b = 4g-4$ be the number of branch points of $p$, and $B = \{ x_1 , \dots , x_b \}$ be the set of branch points and $\tilde{x}_i = p^{-1}(x_i)$. For $\Sigma' = \Sigma \setminus B$ and $S' = S \setminus p^{-1}(B)$, there is a  commutative diagram
\begin{equation*}\xymatrix{
S' \ar[r]^{i_S} \ar[d]^p & S \ar[d]^p \\
\Sigma' \ar[r]^{i_\Sigma} & \Sigma
}
\end{equation*}
Moreover $p : S' \to \Sigma'$ is a degree $2$ covering space, so we obtain an exact sequence 
\begin{equation}\label{equ:coveringseq}\xymatrix{
1 \ar[r]&\pi_1(S',\tilde{x}) \ar[r]& \pi_1(\Sigma',x) \ar[r]^-{\omega} & \mathbb{Z}_2 \ar[r]& 1,}
\end{equation}
where $\tilde{x} \in S'$ and $x = p(\tilde{x}) \in \Sigma'$. The map $\omega : \pi_1(\Sigma',x) \to \mathbb{Z}_2$ is the cohomology class $\omega \in H^1(\Sigma' , \mathbb{Z}_2)$ corresponding to the double cover $p : S' \to \Sigma'$.

\begin{prop}\label{prop:numbersl2a}
There is an exact sequence
\begin{equation*}\xymatrix{
0 \ar[r]&\mathbb{Z}_2 \ar[r]& H^1(\Sigma' , \mathbb{Z}_2) \ar[r]^-{p^{*}}& i_S^*( H^1(S,\mathbb{Z}_2)^\sigma) \ar[r]& 0,}
\end{equation*}
where the kernel of $p^*$ is generated by $\omega$. Moreover, $i_S^* : H^1(S,\mathbb{Z}_2) \to H^1(S' , \mathbb{Z}_2)$ is injective and thus one has an isomorphism
$H^1(S,\mathbb{Z}_2)^\sigma \simeq H^1(\Sigma' , \mathbb{Z}_2) / \langle \omega \rangle.
$\end{prop}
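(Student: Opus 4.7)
The plan is to derive the displayed short exact sequence from the Hochschild--Serre inflation--restriction sequence for the unramified $\mathbb{Z}_2$-Galois cover $p : S' \to \Sigma'$, combined with a monodromy computation at the ramification points of $p : S \to \Sigma$ and a Mayer--Vietoris argument for the injectivity of $i_S^*$.

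Since $\Sigma'$ and $S'$ are both homotopy equivalent to wedges of circles, they are $K(\pi,1)$-spaces for their fundamental groups, and the covering $p : S' \to \Sigma'$ is classified by $\omega \in H^1(\Sigma', \mathbb{Z}_2)$ from (\ref{equ:coveringseq}). The associated $5$-term exact sequence reads
\[
0 \to H^1(\mathbb{Z}_2, \mathbb{Z}_2) \to H^1(\Sigma', \mathbb{Z}_2) \xrightarrow{p^*} H^1(S', \mathbb{Z}_2)^\sigma \to H^2(\mathbb{Z}_2, \mathbb{Z}_2) \to H^2(\Sigma', \mathbb{Z}_2),
\]
with $H^2(\Sigma', \mathbb{Z}_2) = 0$ because $\Sigma'$ has the homotopy type of a $1$-complex. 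Since $H^1(\mathbb{Z}_2, \mathbb{Z}_2) = \langle \omega \rangle$, this immediately gives $\ker(p^*) = \langle \omega \rangle$. The injectivity of $i_S^*$ would follow from a Mayer--Vietoris decomposition $S = S' \cup D$, with $D$ a disjoint union of small open disks about the punctures $\tilde x_i$: the connecting map $H^0(S' \cap D, \mathbb{Z}_2) \to H^1(S, \mathbb{Z}_2)$ vanishes because each $\mathbb{Z}_2$-summand of $H^0(S' \cap D, \mathbb{Z}_2)$ is already hit by the difference of restrictions from $H^0(S', \mathbb{Z}_2) \oplus H^0(D, \mathbb{Z}_2)$.

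The main content is to identify $\mathrm{im}(p^*)$ with $i_S^*\bigl(H^1(S,\mathbb{Z}_2)^\sigma\bigr)$ inside $H^1(S',\mathbb{Z}_2)^\sigma$. Interpreting cohomology classes as flat $\mathbb{Z}_2$-line bundles, for one inclusion I would observe that for $\beta \in H^1(\Sigma', \mathbb{Z}_2)$, the monodromy of $p^*\beta$ around a puncture $\tilde x_i$ of $S'$ equals the square of the monodromy of $\beta$ around the branch point $x_i$, because $p$ has ramification index $2$ there; since this square vanishes in $\mathbb{Z}_2$, the class $p^*\beta$ extends to a class on $S$, and that extension is $\sigma$-invariant by injectivity of $i_S^*$ applied to $\sigma^*\gamma - \gamma$. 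Conversely, given $\alpha \in H^1(S,\mathbb{Z}_2)^\sigma$, the restriction $i_S^*\alpha$ is a $\sigma$-invariant flat bundle on $S'$; since $\sigma$ acts freely on $S'$, this descends to a flat bundle on $\Sigma'$, giving $\beta \in H^1(\Sigma', \mathbb{Z}_2)$ with $p^*\beta = i_S^*\alpha$.

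Combining these pieces yields the short exact sequence of the proposition, and the isomorphism $H^1(S,\mathbb{Z}_2)^\sigma \cong H^1(\Sigma', \mathbb{Z}_2)/\langle \omega \rangle$ then follows from the injectivity of $i_S^*$. The main obstacle will be the extension step: while the monodromy-squared argument is conceptually clean, checking carefully that it exactly matches the $\mathbb{Z}_2$-cokernel appearing in the Hochschild--Serre sequence requires a concrete local analysis of $p$ near its branch points, where the ramification index $2$ plays the crucial role.
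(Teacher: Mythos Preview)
Your organizing framework via the Hochschild--Serre inflation--restriction sequence is clean and gives the kernel $\langle \omega \rangle$ immediately, and your monodromy-squared argument for the inclusion $\mathrm{im}(p^*) \subseteq i_S^*\bigl(H^1(S,\mathbb{Z}_2)^\sigma\bigr)$ is correct. This is a somewhat different organization from the paper, which instead sets up a commutative Mayer--Vietoris diagram for both $S$ and $\Sigma$ to obtain that inclusion, and then constructs the descended representation $\tau : \pi_1(\Sigma') \to \mathbb{Z}_2$ by hand from a $\sigma$-invariant $\rho : \pi_1(S) \to \mathbb{Z}_2$.

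However, your converse step contains a genuine gap. You assert that for $\alpha \in H^1(S,\mathbb{Z}_2)^\sigma$ the class $i_S^*\alpha$ ``descends to a flat bundle on $\Sigma'$'' because ``$\sigma$ acts freely on $S'$''. This is false as stated: your own five-term sequence shows that $p^* : H^1(\Sigma',\mathbb{Z}_2) \to H^1(S',\mathbb{Z}_2)^\sigma$ has cokernel $H^2(\mathbb{Z}_2,\mathbb{Z}_2) \cong \mathbb{Z}_2$, so exactly half of the $\sigma$-invariant classes on $S'$ fail to descend. Freeness of the $\sigma$-action on $S'$ only tells you that a $\sigma$-\emph{equivariant} bundle descends; the obstruction to promoting $\sigma$-invariance to $\sigma$-equivariance is precisely this $\mathbb{Z}_2$, and nothing you have written for this direction distinguishes $i_S^*\alpha$ from an arbitrary $\sigma$-invariant class on $S'$.

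What actually kills the obstruction is that $\alpha$ extends to $S$, where $\sigma$ has fixed points (the ramification points $\tilde{x}_i$). At such a fixed point any lift $\phi : \sigma^*L \to L$ satisfies $\phi^2 = 1$ on the fibre, hence globally, so $L$ is genuinely $\sigma$-equivariant and descends. The paper carries this out at the level of fundamental groups: choosing the base point $\tilde{x}$ near a ramification point $\tilde{x}_1$ and a path $\gamma$ from $\tilde{x}$ to $\sigma(\tilde{x})$ inside a small $\sigma$-invariant disc about $\tilde{x}_1$, the loop $\gamma\cdot\sigma(\gamma)$ is null-homotopic in $S$, so $\rho(\gamma\cdot\sigma(\gamma)) = 1$, and one defines $\tau(\alpha) = \rho(\hat{\alpha})$ for the unique lift $\hat{\alpha}$ of $\alpha$ or $p(\gamma)\cdot\alpha$. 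Either way, the existence of branch points of $p : S \to \Sigma$ is essential for surjectivity, and your sketch does not invoke them for this direction.
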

\begin{proof}
The Mayer-Vietoris sequence applied to $\Sigma$ and $S$ gives a commutative diagram with exact rows
\begin{equation*}\xymatrix{
0 \ar[r] & H^1(S,\mathbb{Z}_2) \ar[r]^{i_S^*} & H^1(S' , \mathbb{Z}_2) \ar[r] & \mathbb{Z}_2^b \ar[r]^-{\delta} & H^2(S,\mathbb{Z}_2) \ar[r] & 0 \\
0 \ar[r] & H^1(\Sigma,\mathbb{Z}_2) \ar[r]^{i_\Sigma^*} \ar[u]^{p^*} & H^1(\Sigma' , \mathbb{Z}_2) \ar[r] \ar[u]^{p^*} 
& \mathbb{Z}_2^b \ar[r]^-{\delta} \ar[u]^{0} & H^2(\Sigma,\mathbb{Z}_2) \ar[r] \ar[u]^{0} & 0
}
\end{equation*}
This shows that $i_S^* : H^1(S,\mathbb{Z}_2) \to H^1(S' , \mathbb{Z}_2)$ is injective and that the image of the map $p^* : H^1(\Sigma' , \mathbb{Z}_2) \to H^1(S',\mathbb{Z}_2)$ is contained in the image of $i_S^*$. Clearly also the image of $p^*$ is $\sigma$-invariant since $p \sigma = p$, so $p^*( H^1(\Sigma',\mathbb{Z}_2)) \subseteq i_S^*( H^1(S,\mathbb{Z}_2)^\sigma)$. From the exact sequence (\ref{equ:coveringseq}) we see that the kernel of $p^* : H^1(\Sigma' , \mathbb{Z}_2) \to i_S^*( H^1(S,\mathbb{Z}_2)^\sigma)$ is $\langle \omega \rangle = \mathbb{Z}_2$, so it remains to show surjectivity of $p^*$.

Let  $\tilde{x}$ be in a small $\sigma$-invariant disc $D \subset S$ containing a single branch point $\tilde{x}_1$, such that  $\tilde{x} \neq \tilde{x}_1$. Let $\gamma$ be a path in $D \setminus \{ \tilde{x}_1 \}$ from $\tilde{x}$ to $\sigma(\tilde{x})$. We may also choose $D$ small enough so that the restriction $p|_D$ has the form $z \mapsto z^2$.
For $\hat{\sigma} : \pi_1(S,\tilde{x}) \to \pi_1(S,\tilde{x})$ given by $\hat{\sigma}(\alpha) = \gamma . \sigma(\alpha) . \gamma^{-1}$,  the map  $\hat{\sigma}$ is an automorphism of   $\pi_1(S,\tilde{x})$ which induces the pullback action on cohomologies  $\sigma^* : H^1(S,\mathbb{Z}_2) \to H^1(S,\mathbb{Z}_2)$ under the identification $H^1(S,\mathbb{Z}_2) = {\rm Hom}(\pi_1(S,\tilde{x}) , \mathbb{Z}_2)$.

For $\rho : \pi_1(S) \to \mathbb{Z}_2$ a homomorphism which is $\sigma^*$-invariant, one has  $\rho \circ \hat{\sigma} = \rho$. Moreover there is a representation $\tau : \pi_1(\Sigma') \to \mathbb{Z}_2$ such that $p^*(\tau) = i_S^*(\rho)$. Indeed, consider $\alpha$ a loop in $\Sigma'$ based at $x$. Then exactly one of the loops $\alpha$, $p(\gamma).\alpha$ lifts to a loop in $S'$ based at $\tilde{x}$. Let $\hat{\alpha}$ be this lift and set $\tau(\alpha) = \rho( \hat{\alpha})$. We claim that $\tau$ is a homomorphism. This follows from the fact that $\rho$ is $\hat{\sigma}$-invariant, and since $\rho( \gamma . \sigma(\gamma) ) = 1$ because $\gamma. \sigma(\gamma)$ is null-homotopic in $S$. By construction it is immediate that $p^*(\tau) = i_S^*(\rho)$.
\end{proof}

To find the group of $\tilde{f}$-invariant elements of $H^1(S,\mathbb{Z}_2)^\sigma$ it is equivalent to find the $f$-invariant elements of $H^1(\Sigma',\mathbb{Z}_2)/\langle \omega \rangle$, since we have the commutative diagram
\begin{equation*}\xymatrix{
\mathbb{Z}_2 \ar[r] & H^1(\Sigma' , \mathbb{Z}_2) \ar[r]^-{p^*} & i_S^*( H^1(S,\mathbb{Z}_2)^\sigma) \\
\mathbb{Z}_2 \ar[r] \ar[u]^{id} & H^1(\Sigma' , \mathbb{Z}_2) \ar[r]^-{p^*} \ar[u]^{f^*} & i_S^*( H^1(S,\mathbb{Z}_2)^\sigma) \ar[u]^{\tilde{f}^*}.
}
\end{equation*}
 For $A = H^1(\Sigma',\mathbb{Z}_2)/\langle \omega \rangle$, the involution $f$ acts on $A$ and we are after the group $A^f$ of $f$-invariant elements of $A$. Let $A^* = {\rm Hom}(A,\mathbb{Z}_2)$. Then since $A$ is a $\mathbb{Z}_2$-vector space,  $A$ and $A^*$ are isomorphic, and the fixed point subspace $A^f$ is isomorphic to the subspace $(A^*)^f$ of $A^*$ fixed by the dual action of $f$ on $A^*$. The action $f_* : A^* \to A^*$ fits into a commutative diagram with exact rows
\begin{equation*}\xymatrix{
0 \ar[r] & A^* \ar[r] & H_1(\Sigma' , \mathbb{Z}_2) \ar[r]^-{\omega} & \mathbb{Z}_2 \ar[r] & 0 \\
0 \ar[r] & A^* \ar[r] \ar[u]^{f_*} & H_1(\Sigma' , \mathbb{Z}_2) \ar[r]^-{\omega} \ar[u]^{f_*} & \mathbb{Z}_2 \ar[r] \ar[u]^{id} & 0.
}
\end{equation*}
Thus to determine $(A^*)^f$ it is sufficient to determine the group $H_1(\Sigma' , \mathbb{Z}_2)$, the action of  the map $f_* : H_1(\Sigma' , \mathbb{Z}_2) \to H_1(\Sigma' , \mathbb{Z}_2)$ and the homomorphism $\omega : H_1(\Sigma' , \mathbb{Z}_2) \to \mathbb{Z}_2$.
Note that the Mayer-Vietoris sequence applied to $\Sigma$ gives an exact sequence
\begin{equation*}\xymatrix{
0 \ar[r] & H_2(\Sigma,\mathbb{Z}_2) \ar[r]^-{\partial} & \mathbb{Z}_2^b \ar[r] & H_1(\Sigma' , \mathbb{Z}_2) \ar[r]^{(i_\Sigma)_*} & H_1(\Sigma , \mathbb{Z}_2) \ar[r] & 0.
}
\end{equation*}
The above $\mathbb{Z}_2^b$ group corresponds to cycles around each of the $b$ branch points, and the boundary $\partial : H^2(\Sigma,\mathbb{Z}_2) = \mathbb{Z}_2 \to \mathbb{Z}_2^b$ is the diagonal $\mathbb{Z}_2 \to \mathbb{Z}_2^b$. In particular, $H_1(\Sigma' , \mathbb{Z}_2)$ is a $\mathbb{Z}_2$-vector space of dimension $2g + b - 1 = 6g-5$.

\begin{thm}\label{thm:numbersl2b}
Suppose that at least one branch point of $p : S \to \Sigma$ is fixed by $f$. Then the fixed point set of the action of $\tilde{f}$ on the Prym variety ${\rm Prym}(S,\Sigma)$ has $2^{n_0 + u/2 - 1}$ connected components, where $n_0$ is the number of fixed components of $f : \Sigma \to \Sigma$ which do not contain branch points, and $u$ is the number of branch points which are fixed by $f$.
\end{thm}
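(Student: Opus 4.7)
The plan is to count $|P|$, the group of $\tilde{f}$-invariant $2$-torsion points in $\mathrm{Prym}(S,\Sigma)$. Since $\mathrm{Prym}(S,\Sigma)$ is a complex torus of complex dimension $3(g-1)$ with an anti-holomorphic involution $\tilde{f}$, Lemma \ref{lem.torusinvol} shows that the fixed locus has $|P|/2^{3g-3}$ connected components, so the theorem is equivalent to $\dim_{\mathbb{Z}_2} P = 3g - 4 + n_0 + u/2$.

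The material preceding the theorem identifies $P$ with the $f$-fixed subspace $A^f$ of $A = H^1(\Sigma', \mathbb{Z}_2)/\langle \omega \rangle$, and via $\mathbb{Z}_2$-duality with $(A^*)^f$, sitting inside the $f$-equivariant short exact sequence
\begin{equation*}
0 \to A^* \to H_1(\Sigma', \mathbb{Z}_2) \xrightarrow{\omega} \mathbb{Z}_2 \to 0.
\end{equation*}
Under the hypothesis $u \ge 1$, a small loop around any fixed branch point gives an $f$-invariant class in $H_1(\Sigma', \mathbb{Z}_2)$ with $\omega$-value $1$, so $\omega$ is surjective on the $f$-fixed part, and therefore $\dim(A^*)^f = \dim H_1(\Sigma', \mathbb{Z}_2)^f - 1$. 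The theorem thus reduces to showing $\dim H_1(\Sigma', \mathbb{Z}_2)^f = 3g - 3 + n_0 + u/2$.

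To compute this dimension I would use the Mayer--Vietoris sequence $0 \to \mathbb{Z}_2 \to \mathbb{Z}_2^b \to H_1(\Sigma', \mathbb{Z}_2) \to H_1(\Sigma, \mathbb{Z}_2) \to 0$ appearing in the paper, with $b = 4g-4$. The action of $f_*$ on $\mathbb{Z}_2^b$ permutes the basis loops according to $f$ on the branch set: $u$ fixed and $(b-u)/2$ swapped pairs, so $\dim (\mathbb{Z}_2^b)^f = (u+b)/2$. Splitting the Mayer--Vietoris sequence into $0 \to \mathbb{Z}_2 \to \mathbb{Z}_2^b \to C \to 0$ and $0 \to C \to H_1(\Sigma', \mathbb{Z}_2) \to H_1(\Sigma, \mathbb{Z}_2) \to 0$, and taking the long exact sequences in $\langle f \rangle \cong \mathbb{Z}/2$-group cohomology, the assumption $u \ge 1$ forces the connecting map $H^1(\mathbb{Z}/2, \mathbb{Z}_2) \to H^1(\mathbb{Z}/2, \mathbb{Z}_2^b)$ to be injective and yields $\dim C^f = u/2 + 2g - 3$.

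The final step is to pin down the image of $H_1(\Sigma', \mathbb{Z}_2)^f$ inside $H_1(\Sigma, \mathbb{Z}_2)^f$, which I would analyse using an $f$-equivariant CW structure on $\Sigma$ whose $0$-skeleton includes all branch points and whose $1$-skeleton contains the $n$ fixed circles of $f$. Each of the $n_0$ fixed circles disjoint from the branch set lifts to a globally $f$-invariant $1$-cycle on $\Sigma'$; the other $n-n_0$ fixed circles decompose into arcs between consecutive fixed branch points and contribute only through $C^f$; and the remaining $f$-invariant classes transverse to the fixed locus lift provided their obstruction into $H^1(\mathbb{Z}/2, C)$ vanishes. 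The principal obstacle is precisely this last verification: showing that, irrespective of the invariant $a$, the image in question has dimension exactly $g + n_0$, so that when combined with $\dim C^f$ the total is $3g - 3 + n_0 + u/2$, and the desired count $2^{n_0 + u/2 - 1}$ of components follows.
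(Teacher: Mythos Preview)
Your reduction matches the paper's exactly: both reduce to computing $\dim_{\mathbb{Z}_2}\ker(f_*-1)$ on $H_1(\Sigma',\mathbb{Z}_2)$, and both use the fixed-branch-point hypothesis to split off the $\omega$ map. Your computation of $\dim C^f = 2g-3+u/2$ via the long exact sequence in $\mathbb{Z}/2$-cohomology is correct and is a cleaner packaging of part of the argument than what the paper does.

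The gap you flag is real, and it is exactly where the content of the proof lies. To determine the image of $H_1(\Sigma',\mathbb{Z}_2)^f$ in $H_1(\Sigma,\mathbb{Z}_2)^f$ you must compute the connecting map $H_1(\Sigma,\mathbb{Z}_2)^f \to H^1(\mathbb{Z}/2,C)$, and this depends on the global topology of the pair $(\Sigma,f)$ in a way your CW-structure sketch does not resolve. The $f$-invariant cycles transverse to the fixed locus do not in general lift to $f$-invariant cycles in $\Sigma'$; their obstruction is governed by how they separate the branch points, and the answer changes with the invariant $a$. Your proposal stops precisely at the point where this has to be confronted.

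The paper does not try to push the exact-sequence viewpoint through this step. Instead it writes down, for each of the three topological types of $(\Sigma,f)$ (namely $a=0$; $a=1$ with $g-n$ even; $a=1$ with $g-n$ odd), an explicit generating set for $H_1(\Sigma',\mathbb{Z}_2)$ adapted to $f$: the branch-point loops $C_k,C_k',D_l$, cycles $A_i,B_i,A_i',B_i'$ swapped in pairs by $f_*$, and cycles $A_j'',B_j''$ attached to the fixed circles and to paths crossing them. For each type the action of $\theta=f_*-1$ on these generators is recorded explicitly (for instance $\theta(A_j'')=\sum_{h\in A_j''}D_h$, while the formula for $\theta(B_j'')$ varies with the case), and one checks by direct linear algebra that $\dim\ker\theta=3g-3+n_0+u/2$. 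Your group-cohomology framework does not bypass this case analysis; it only relocates it to the obstruction computation you left open.
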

\begin{proof}
Let $A^*$ be the kernel of $\omega : H_1(\Sigma' , \mathbb{Z}_2) \to \mathbb{Z}_2$ and $(A^*)^f$ the subspace of $A^*$ fixed by $f_*$. We must show that $(A^*)^f$ has dimension $3g-3 + n_0 + u/2 - 1$. Consider a cycle $D \in H_1(\Sigma' , \mathbb{Z}_2)$ around a branch point which is fixed by $f$. Then $f_*(D) = D$ and $\omega(D) = 1$. Therefore it suffices to show that the kernel of $\theta = f_* - 1 : H_1(\Sigma',\mathbb{Z}_2) \to H_1(\Sigma',\mathbb{Z}_2)$ has dimension $3g-3+n_0 + u/2$.

Recall that there are $b = 4g-4$ branch points and write $b = 2t + u$, where $t$ is the number of pairs of branch points exchanged by $f$, and $u$ is the number of fixed branch points. For $i = 1, \dots , t$ let $C_i,C'_i$ be cycles around a pair of branch points exchanged by $f$. Further, for  $j = 1, \dots, u$ denote by  $D_j$ cycles around the fixed branch points. From the Mayer-Vietoris sequence we have a single relation between these cycles $\sum_{i=1}^t C_i + C'_i + \sum_{i=1}^u D_j = 0.$ These cycles span a $b-1$-dimensional subspace of $H_1(\Sigma',\mathbb{Z}_2)$ and we have $\theta( C_i ) = \theta(C'_i) = C_i + C'_i$, $\theta(D_j) = 0$.

Let $a_1, \dots , a_{2g} \in H_1(\Sigma , \mathbb{Z}_2)$ be a basis of cycles in $\Sigma$. We may assume that the cycles do not touch the branch points so that $a_1,\dots , a_{2g}$ are also cycles in $\Sigma'$. Together with the cycles $C_i,C_i',D_j$ we have a generating set for $H_1(\Sigma',\mathbb{Z}_2)$ satisfying  $\sum_{i=1}^t C_i + C'_i + \sum_{i=1}^u D_j = 0$.
To proceed we consider three subcases depending on the topology of $(\Sigma , f)$. Let $(n,a)$ be the topological invariants associated to the anti-holomorphic involution $f$.

{\it Case 1: $a = 0$.} Write $g = 2s+r$, $n = r+1$. We have a generating set for $H_1(\Sigma',\mathbb{Z}_2)$ consisting of $A_i,B_i,A'_i,B'_i, A''_j,B''_j , C_k,C'_k, D_{l}$ for $1 \le i \le s$,  $ 1 \le j \le r$,  $ 0 \le k \le t$,  and   for $ 1 \le l \le u$ with one relation $\sum_{k=1}^t C_k + C'_k + \sum_{l=1}^u D_l = 0$. The cycles $A_i,A'_i$ are interchanged by $f_*$ and similarly for $B_i,B'_i$. The cycles $A''_i$ correspond to a choice of $r = n-1$ of the fixed components of $f$, perturbed slightly so as to avoid the branch points. Let $\sum_{h \in A''_j} D_h$ denote the sum of cycles $D_h$ where $D_h$ is a cycle around a branch point which lies in the fixed component of $f$ corresponding to $A''_j$. Finally the $B''_j$ are cycles that cross two of the fixed components of $f$. We can choose the $A''_j,B''_j$ so that $\theta(A''_j) = \sum_{h \in A''_j} D_h$ and $\theta(B''_j) = 0$. Now it is a straightforward computation to show that ${\rm Ker}(\theta)$ has dimension $2s+r + (n_0-1) + t + u = 3g-3 + n_0 + u/2$ as required.

{\it Case 2: $a=1$, and $g-n$ even.} Write $g = 2s+r$, $n=r$. We have a generating set for $H_1(\Sigma',\mathbb{Z}_2)$ consisting of $A_i,B_i,A'_i,B'_i, A''_j,B''_j,C_k,C'_k,D_{l}$  with one relation as before. As previously, $f_*$ exchanges the pairs $A_i,A'_i$ and $B_i,B'_i$, and the $A''_j$ correspond to the fixed components of $f$. We again have $\theta(A''_j) = \sum_{h \in A''_j} D_h$, but now we have $\theta(B''_j) = \sum_{j=1}^r A''_j + \sum_{k=1}^t C_k$. Again we find ${\rm Ker}(\theta)$ has dimension $3g-3 + n_0 + u/2$.

{\it Case 3: $a=1$, and $g-n$ odd. }Write $g = 2s+r+1$, $n=r$. We have a generating set for $H_1(\Sigma',\mathbb{Z}_2)$ consisting of $X,Y$, and $A_i,B_i,A'_i,B'_i, A''_j,B''_j, C_k, C'_k, D_l,$ with one relation as before. We have that $f_*(X) = X$ and $f_*(Y) = Y + \sum_{j=1}^r A''_j$. The pairs $A_i,A'_i$ and $B_i,B'_i$ are exchanged by $f_*$. The $A''_j$ again correspond to fixed components of the involution  $f$ and satisfy $\theta(A''_j) = \sum_{h \in A''_j} D_h$. Finally we have $\theta(B''_j) = Y$. In this case we again verify that ${\rm Ker}(\theta)$ has dimension $3g-3 + n_0 + u/2$.
\end{proof}


\section{Real and quaternionic bundles}\label{sec:realquat}
We shall dedicate this section to the study of the relation between Higgs bundles fixed by the induced action of an anti-holomorphic involution $f : \Sigma \to \Sigma$,  and bundles with real or quaternionic structure. In particular, we shall consider the case of $SL(2,\mathbb{C})$-Higgs bundles. 

Let $E,F$ be rank $n$ holomorphic vector bundles on $\Sigma$, and $\phi : E \to F$ be an anti-linear bundle isomorphism covering $f$. The map $\phi$ can be extended to a map $\phi : \Omega^{(p,q)}(E) \to \Omega^{(p,q)}(F)$ of form-valued sections as follows. For any point $x \in \Sigma$ choose a local trivialisation of $E$ near $x$, and a trivialisation of $F$ near $f(x)$. A local section $s$ of $E$ in this trivialisation is a $\mathbb{C}^n$-valued function defined near $x$. The corresponding local section of $F$ near $f(x)$ is of the form $\phi(s) = g f^*(\overline{s})$, for some locally defined $GL(n,\mathbb{C})$-valued function $g$. The extension of $\phi$ to form-valued sections is given by setting $\phi( \omega \otimes s ) = f^*(\overline{\omega}) \otimes g f^*(\overline{s})$, where $\omega$ is a form on $\Sigma$ defined near $x$. We say that such a map $\phi : E \to F$ is holomorphic if it sends holomorphic sections to holomorphic sections, or if , letting $\overline{\partial}_E,\overline{\partial}_F$ denote the corresponding $\overline{\partial}$-operators on $E,F$, we have $\phi \circ \overline{\partial}_E = \overline{\partial}_F \circ \phi$.

Let $(E,\Phi)$ be a rank $n$ Higgs bundle satisfying the Hitchin equations and whose isomorphism class is fixed by $f$. Since $E$ must have degree $0$ we can take it to be the trivial rank $n$ bundle equipped with the constant Hermitian structure. Let $\overline{\partial}_A$ be the $\overline{\partial}$-operator defining the holomorphic structure, so $\overline{\partial}_A = \overline{\partial} + A$ for some $A \in \Omega^{0,1}( \mathfrak{gl}(n,\mathbb{C}))$. Since $(E,\Phi)$ satisfies the Hitchin equations we have that $f(E,\Phi) = ( f^*(\partial_A) , f^*(\overline{\Phi}^t) )$, where $\partial_A = \partial - \overline{A}^t$. 
Moreover, $f^*(\partial_A) = \overline{\partial} - f^*(\overline{A}^t)$ and thus supposing that $f(E,\Phi)$ is isomorphic to $(E,\Phi)$   is equivalent to the existence of an anti-linear isomorphism $\phi : E \to E^*$ covering $f$, holomorphic in the sense described above and such that $\phi \circ \Phi^t = \Phi \circ \phi$. Note that this gives a purely holomorphic interpretation of the condition for a Higgs bundle $(E,\Phi)$ to be fixed by $f$.

Suppose now that $(E,\Phi)$ is an $SL(2,\mathbb{C})$-Higgs bundle. Then we have an isomorphism $(E^* , \Phi^t) \simeq (E , -\Phi)$. In this case if the isomorphism class is fixed by $f$ we have a holomorphic anti-linear isomorphism $\phi : E \to E$ covering $f$ and such that $\phi^{-1} \circ \Phi \circ \phi = -\Phi$. Note then that $\phi^2 : E \to E$ is a linear isomorphism covering the identity, preserving the holomorphic structure and $\Phi$. As shown in \cite{N1}, it follows that $\phi^2$ preserves the corresponding flat connection $\nabla = \nabla_A + \Phi + \Phi^*$. Assuming $\nabla$ is stable then $\phi^2 = \lambda \in \mathbb{C}^*$ is constant. Rescaling by a positive constant we may assume $\lambda$ has norm $1$.

\begin{prop}
The constant $\lambda$ satisfies $\lambda = \pm 1$.
\end{prop}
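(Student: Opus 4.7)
The plan is to exploit the fact that $\phi$ is anti-linear and $\phi^2 = \lambda \cdot \mathrm{id}_E$ with $|\lambda|=1$. The key observation is to compute $\phi^3$ in two different ways and compare.

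First I would write $\phi^3 = \phi \circ \phi^2 = \phi^2 \circ \phi$ (these are manifestly equal as endomorphisms of $E$ covering $f$). On one hand, $\phi^2 \circ \phi = (\lambda \cdot \mathrm{id}) \circ \phi = \lambda \phi$. On the other hand, since $\phi$ is anti-linear, for any local section $s$ of $E$ we have
\begin{equation*}
\phi(\phi^2(s)) \;=\; \phi(\lambda s) \;=\; \overline{\lambda}\, \phi(s),
\end{equation*}
so $\phi \circ \phi^2 = \overline{\lambda}\, \phi$. Comparing the two expressions gives $\lambda\,\phi = \overline{\lambda}\,\phi$. Since $\phi$ is a bundle isomorphism and in particular not the zero map, we conclude $\lambda = \overline{\lambda}$, i.e.\ $\lambda \in \mathbb{R}$. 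Combined with the normalisation $|\lambda|=1$ fixed just before the statement, this forces $\lambda = \pm 1$.

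The argument is essentially a one-line computation, so there is no real obstacle beyond making the anti-linearity step precise; I would just note explicitly that anti-linearity of $\phi$ means $\phi(c\,s) = \overline{c}\,\phi(s)$ for any locally constant scalar $c$, which is what allows the conjugate to appear. No use of stability or of the Hitchin equations is needed at this final step, since those were already used to deduce that $\phi^2$ is a scalar.
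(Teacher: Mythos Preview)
Your proof is correct and is in fact simpler and more general than the paper's argument. The paper proceeds by taking determinants: since $E$ has rank $2$ and trivial determinant, $\det(\phi)$ is an anti-linear automorphism of the trivial line bundle covering $f$, so $\det(\phi)(s) = \alpha\, f^*(\overline{s})$ for some constant $\alpha \in \mathbb{C}^*$; then $\det(\phi^2) = \lambda^2$ on the one hand, while $\det(\phi)\circ\det(\phi)$ acts as multiplication by $\alpha\overline{\alpha} = |\alpha|^2$ on the other, forcing $\lambda^2 > 0$ and hence $\lambda = \pm 1$. Your argument bypasses determinants entirely and uses only the anti-linearity of $\phi$ to compare $\phi\circ(\lambda\,\mathrm{id}) = \overline{\lambda}\,\phi$ with $(\lambda\,\mathrm{id})\circ\phi = \lambda\,\phi$. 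This is the standard observation that the square of an anti-linear map, if scalar, must be a \emph{real} scalar; it needs neither the rank-$2$ hypothesis nor the triviality of $\det E$, and would apply verbatim to $GL(n,\mathbb{C})$-Higgs bundles of arbitrary rank. The paper's determinant route, by contrast, is tailored to the $SL(2,\mathbb{C})$ setting.
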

\begin{proof}
Taking determinants we have a holomorphic anti-linear isomorphism covering $f$ given by ${\rm det}(\phi) : \mathbb{C} \to \mathbb{C}$. Thus ${\rm det}(\phi)(s) = \alpha f^*(\overline{s})$, for some constant $\alpha \in \mathbb{C}^*$. However, we also have that ${\rm det}(\phi) \circ {\rm det}(\phi) = \lambda^2$, hence $\lambda^2 = \alpha \overline{\alpha}$ is real and positive. But $\lambda$ has norm $1$, so $\lambda^2 = 1$.
\end{proof}

From the above analysis, if we assume $(E,\Phi)$ is stable, the bundle $E$ has either a real or quaternionic structure according to whether $\lambda = 1$ or $\lambda = -1$, respectively. We will now examine how this distinction is reflected in the spectral data for $(E,\Phi)$.

Suppose that $(E,\Phi)$ is a stable $SL(2,\mathbb{C})$-Higgs bundle with smooth spectral curve $p : S \to \Sigma$ and which is a fixed point of the action of $f$. Recall from Section \ref{sec:rank2case} that $S$ is the double cover $S= \{ (\eta, z) \in K ~|~ \eta^{2}=q(z)\}$ associated to a quadratic differential $q \in H^0(\Sigma , K^2)$ such that $f^*(\overline{q}) = q$ and $q$ has only simple zeros. Let $K^{1/2}$ be an $f$-invariant theta characteristic. Then we have $E = p_*( L \otimes p^*(K^{1/2}))$ for some $L \in {\rm Prym}(S,\Sigma)$, and $\Phi$ is obtained by pushing forward the tautological section $\eta : L \otimes p^*(K^{1/2}) \to L \otimes p^*(K^{3/2})$. Let $\tilde{f} : S \to S$ be the natural lift of $f$ to $S$ and $\sigma : S \to S$ the automorphism which exchanges sheets of the covering. Since $(E,\Phi)$ is fixed by the action of $f$ we have $\tilde{f}^*(\overline{L}) = L^*$ and since $L \in {\rm Prym}(S,\Sigma)$ we have $\sigma^*(L) = L^*$. Recall that $\tilde{f} : K \to K$ is defined by $\tilde{f}(\omega) = f^*(\overline{\omega})$.

\begin{lem}\label{lem:gamma}
There exists a holomorphic anti-linear isomorphism $\gamma : K^{1/2} \to K^{1/2}$ covering $f$ such that $\gamma \circ \gamma = \epsilon_1 = \pm 1$ and $\gamma \otimes \gamma = \tilde{f}$. If $f$ has fixed points then $\epsilon_1 = 1$.
\end{lem}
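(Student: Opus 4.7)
My plan is to construct $\gamma$ by rescaling any anti-linear isomorphism produced by the $f$-invariant theta characteristic, and then to obtain the constraint $\gamma\circ\gamma=\pm 1$ from a compatibility identity between $\gamma\otimes\gamma$ and $\tilde f$.

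First, I would invoke Propositions \ref{prop.fixedspin} and \ref{prop.actionspin} together with our choice of $f$-invariant $K^{1/2}$ to pick any holomorphic anti-linear isomorphism $\gamma_0\colon K^{1/2}\to K^{1/2}$ covering $f$ (such a map is the same data as a holomorphic isomorphism $f^*(\overline{K}^{1/2})\simeq K^{1/2}$, which exists by hypothesis). Since $f^2=\mathrm{id}$ and $\gamma_0$ is anti-linear, the composition $\gamma_0\circ\gamma_0$ is a \emph{linear} holomorphic automorphism of $K^{1/2}$ covering the identity, hence equal to a constant $\lambda_0\in\mathbb{C}^*$. Similarly, both $\gamma_0\otimes\gamma_0$ and $\tilde f$ are holomorphic anti-linear automorphisms of $K$ covering $f$, so their composition inverse is a scalar automorphism of $K$, giving $\gamma_0\otimes\gamma_0=\mu_0\tilde f$ for some $\mu_0\in\mathbb{C}^*$.

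Next, the key computation is to square this identity. On one hand, $(\gamma_0\otimes\gamma_0)^2=\gamma_0^2\otimes\gamma_0^2=\lambda_0^2\,\mathrm{id}_K$. On the other hand, since $\tilde f$ is anti-linear and $\tilde f^2=\mathrm{id}_K$,
\begin{equation*}
(\mu_0\tilde f)^2(\omega)=\mu_0\tilde f(\mu_0\tilde f(\omega))=\mu_0\overline{\mu_0}\,\omega=|\mu_0|^2\omega.
\end{equation*}
Comparing yields $\lambda_0^2=|\mu_0|^2$, so $\lambda_0$ is in fact real and $\lambda_0=\pm|\mu_0|$.

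Then I would set $\gamma:=c\,\gamma_0$ with $c\in\mathbb{C}^*$ chosen so that $c^2=\mu_0^{-1}$. Using anti-linearity, $(c\gamma_0)\otimes(c\gamma_0)=c^2(\gamma_0\otimes\gamma_0)=\tilde f$, as required; and $\gamma\circ\gamma(v)=c\gamma_0(c\gamma_0(v))=c\overline c\,\gamma_0^2(v)=|c|^2\lambda_0\cdot v$, with $|c|^2=|\mu_0|^{-1}$. Therefore $\gamma^2=\lambda_0/|\mu_0|=\pm 1$, so we may define $\epsilon_1:=\gamma^2\in\{+1,-1\}$. (The freedom of replacing $c$ by $-c$, or $\gamma_0$ by $e^{i\phi}\gamma_0$, does not change $|c|^2\lambda_0$, so $\epsilon_1$ is a genuine invariant of the choice of $f$-invariant theta characteristic.)

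Finally, for the fixed-point statement, let $p\in\Sigma$ be a fixed point of $f$. In any local trivialisation near $p$, the restriction $\gamma_p\colon(K^{1/2})_p\to(K^{1/2})_p$ is an anti-linear self-map of a one-dimensional $\mathbb{C}$-vector space, so $\gamma_p(v)=\alpha\overline v$ for some $\alpha\in\mathbb{C}^*$, giving $\gamma_p\circ\gamma_p=|\alpha|^2\cdot\mathrm{id}>0$. But $\gamma\circ\gamma=\epsilon_1$ is a global constant, so evaluating at $p$ forces $\epsilon_1=|\alpha|^2>0$, and combined with $\epsilon_1\in\{\pm 1\}$ this gives $\epsilon_1=1$. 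The main obstacle is simply the careful algebraic bookkeeping of the anti-linear scalings in the identities $\gamma\otimes\gamma=\tilde f$ and $\gamma^2=\lambda$, because a naive attempt to rescale $\gamma$ changes $\mu_0$ and $\lambda_0$ by different powers of the scalar; the derived reality $\lambda_0^2=|\mu_0|^2$ is exactly what ensures the two normalisations can be imposed simultaneously.
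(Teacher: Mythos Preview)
Your proof is correct and follows essentially the same approach as the paper: both start from an arbitrary anti-linear lift, derive the key identity relating the square of $\gamma$ to the scalar discrepancy between $\gamma\otimes\gamma$ and $\tilde f$ (your $\lambda_0^2=|\mu_0|^2$ is the paper's $\alpha\overline{\alpha}=\beta^2$), and then rescale to achieve both normalisations simultaneously. Your fixed-point argument, computing $\gamma_p^2=|\alpha|^2>0$ explicitly, is just an unpacking of the paper's one-line observation that a quaternionic structure cannot exist on a one-dimensional complex vector space.
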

\begin{proof}
Since $f^*(\overline{K}^{1/2}) \simeq K^{1/2}$, we have that there exists a holomorphic anti-linear isomorphism $\gamma : K^{1/2} \to K^{1/2}$ covering $f$. Then $\gamma \circ \gamma = \beta \in \mathbb{C}^*$ for some constant $\beta$. Rescaling by a positive constant we can assume $\beta$ has norm $1$. We also have $(\gamma \otimes \gamma)(\omega) = \alpha f^*(\overline{\omega})$ for some constant $\alpha \in \mathbb{C}^*$. Then $\alpha \overline{\alpha} = \beta^2$, so that $\alpha \overline{\alpha} = \beta^2 = 1$. Choose $u \in \mathbb{C}^*$ with $u^2 = \alpha$ and replace $\gamma$ by $u \gamma$. Then $\gamma \otimes \gamma = \tilde{f}$ and $\gamma \circ \gamma = \beta = \epsilon_1$, where $\beta^2 = 1$. Finally if $f$ has fixed points then we must have $\beta = 1$ since otherwise we would obtain a quaternionic structure on a rank one complex vector space.
\end{proof}

Let $\eta \in \Gamma(S,p^*(K))$ denote the tautological section. We have $\tilde{f}^*(\eta) = \eta$ and $\sigma^*(\eta) = -\eta$. Seting $\tau = \tilde{f} \sigma$, the map  $\tau : S \to S$ is an anti-holomorphic involution and $\tau^*(\eta) = -\eta$. Moreover we also have $\tau^*(\overline{L}) = L$. Suppose now that $\tilde{\tau} : L \to L$ is a lift of $\tau$ to a holomorphic anti-linear map satisfying $\tilde{\tau} \circ \tilde{\tau} = \epsilon_2$, where $\epsilon_2 = \pm 1$. Let $\gamma : K^{1/2} \to K^{1/2}$ be as in Lemma \ref{lem:gamma}. Then since $E = p_*(L \otimes p^*(K^{1/2})$, it is clear that $\tilde{\tau} \otimes p^*(\gamma)$ pushes-forward to a holomorphic anti-linear involution $\phi : E \to E$ covering $f$, satisfying $\phi^{-1} \circ \Phi \circ \phi = -\Phi$ and $\phi^2 = \epsilon$, where $\epsilon = \epsilon_1 \epsilon_2$. Thus $E$ is real or quaternionic according to whether $\epsilon = 1$ or $\epsilon = -1$.

\begin{prop}\label{prop:taufixed}
Suppose $q$ has a zero fixed by $f$. Then there exists a lift $\tilde{\tau} : L \to L$ of $\tau$ with $\tilde{\tau} \circ \tilde{\tau} = 1$. In this case $E$ has a real structure.
\end{prop}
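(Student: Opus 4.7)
The plan is to construct an anti-linear lift of $\tau$ to $L$ whose square is $+1$ by starting with an arbitrary lift and then rescaling it, where the rescaling factor is controlled via a fixed point of $\tau$ on $S$ provided by the hypothesis.

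First I would produce some holomorphic anti-linear lift $\tilde{\tau}_0 : L \to L$ of $\tau$. Since $L \in {\rm Prym}(S,\Sigma)$ we have $\sigma^* L \simeq L^{-1}$, and since $(E,\Phi)$ is fixed by $f$ we have $\tilde{f}^*(\overline{L}) \simeq L^{-1}$; composing gives $\tau^*(\overline{L}) = \sigma^* \tilde{f}^*(\overline{L}) \simeq L$, which is equivalent to the existence of a holomorphic anti-linear map $\tilde{\tau}_0$ covering $\tau$. Its square $\tilde{\tau}_0^2$ is a $\mathbb{C}$-linear holomorphic automorphism of $L$ covering ${\rm id}_S$, hence equals multiplication by some constant $\lambda \in \mathbb{C}^*$. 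Comparing $\tilde{\tau}_0^3 = \tilde{\tau}_0 \circ (\lambda \cdot {\rm id}) = \bar\lambda \, \tilde{\tau}_0$ (by anti-linearity) with $\tilde{\tau}_0^3 = (\lambda \cdot {\rm id}) \circ \tilde{\tau}_0 = \lambda \, \tilde{\tau}_0$ forces $\lambda \in \mathbb{R}$.

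The main step, and the only place the hypothesis enters, is showing $\lambda > 0$. A zero $z_0$ of $q$ fixed by $f$ produces the branch point $s_0 = (0,z_0) \in S$, which is fixed by $\sigma$ (since $\eta = 0$) and by $\tilde{f}$ (since $f(z_0) = z_0$), hence by $\tau$. Choosing a basis vector $e$ of the one-dimensional complex fibre $L_{s_0}$, we can write $\tilde{\tau}_0(e) = \alpha e$ for some $\alpha \in \mathbb{C}^*$, so $\tilde{\tau}_0^2(e) = \bar\alpha \alpha e = |\alpha|^2 e$. Therefore $\lambda = |\alpha|^2 > 0$, and the positive real rescaling $\tilde{\tau} = \lambda^{-1/2} \tilde{\tau}_0$ satisfies $\tilde{\tau}^2 = |\lambda^{-1/2}|^2 \lambda = 1$, which proves the first assertion.

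For the second assertion, the hypothesis also ensures $f$ has fixed points on $\Sigma$, so Lemma \ref{lem:gamma} yields $\gamma : K^{1/2} \to K^{1/2}$ with $\gamma \circ \gamma = \epsilon_1 = 1$ and $\gamma \otimes \gamma = \tilde{f}$. The construction immediately preceding the proposition then shows that $\phi = p_*(\tilde{\tau} \otimes p^*(\gamma))$ is a holomorphic anti-linear map $E \to E$ covering $f$ with $\phi^{-1} \circ \Phi \circ \phi = -\Phi$ and $\phi^2 = \epsilon_1 \epsilon_2 = 1$, i.e.\ a real structure on $E$ compatible with $\Phi$. The genuine difficulty in the argument is the positivity of $\lambda$; it rests on the elementary but crucial observation that an anti-linear involution of a one-dimensional complex vector space cannot square to $-1$, so that the existence of even one $\tau$-fixed point on $S$ rules out the quaternionic alternative globally.
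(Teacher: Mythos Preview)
Your proof is correct and, interestingly, is not the route the paper takes for this proposition. The paper realises $L$ via its flat unitary holonomy $\rho:\pi_1(S,x)\to U(1)$, chooses the basepoint $x$ to be a $\tau$-fixed point so that the induced automorphism $\hat{\tau}$ of $\pi_1(S,x)$ is genuinely involutive, lifts $\tau$ to an involution $\tau'$ on the universal cover, and sets $\tilde{\tau}(p,s)=(\tau'(p),\overline{s})$; the involutivity of $\tilde{\tau}$ is then inherited from $\tau'$. Your argument instead follows exactly the template of Lemma~\ref{lem:gamma}: take any anti-linear lift, observe its square is a real constant, and use a $\tau$-fixed fibre (where an anti-linear endomorphism of a complex line necessarily squares to a nonnegative real) to pin down the sign. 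This is more elementary and self-contained, and it makes transparent why the single hypothesis ``$q$ has a zero fixed by $f$'' suffices. The paper's approach, on the other hand, generalises seamlessly to the fixed-point-free case treated in Proposition~\ref{prop:taunofix}, where the obstruction $\rho(\mu)$ appears naturally from the failure of $\tau'^2$ to be the identity on the universal cover; your rescaling method would need to be supplemented by a separate holonomy computation there.
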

\begin{proof}
Since $\tau = \tilde{f} \sigma$ it follows that $\tau$ has fixed points. Let $x \in S$ be a fixed point of $\tau$. Since $L$ is a holomorphic line bundle, then  there is associated to $L$ a corresponding unitary homomorphism $\rho : \pi_1(S,x) \to U(1)$, where $x \in S$. Let $\tilde{S} \to S$ be the universal cover viewed as a principal $\pi_1(S,x)$-bundle. Then $L$ may be identified as the quotient of $\tilde{S} \times \mathbb{C}$ by the relation $(p g , s) \sim (p , \rho(g) s)$. Let $\hat{\tau} : \pi_1(S,x) \to \pi_1(S,x)$ be the automorphism of $\pi_1(S,x)$ constructed as in Section \ref{sec:ac.fund}. Then since $x$ is a fixed point, $\hat{\tau}$ is an involution. Moreover since $\tau^*(\overline{L}) = L$ we have $\rho \circ \hat{\tau} = \overline{\rho}$. We can then lift $\tau$ to an involution $\tau' : \tilde{S} \to \tilde{S}$ which satisfies $\tau'(pg) = \tau'(p) \hat{\tau}(g)$. Now define $\tilde{\tau} : L \to L$ by $\tilde{\tau}(p , s) = (\tau'(p) , \overline{s})$. This is well-defined since $\rho \circ \hat{\tau} = \overline{\rho}$, and is an involution since $\tau'$ is an involution. Finally note that $\epsilon_1$ in Lemma \ref{lem:gamma} equals $1$ since $f$ has fixed points.
\end{proof}
\begin{prop}\label{prop:taunofix}
Supppose $q$ has no zeros fixed by $f$. Let $x \in S$ and choose a path $l$ from $x$ to $\tau(x)$. Let $\mu$ be the loop $\mu = l.\tau(l)$ and let $\rho : \pi_1(S,x) \to U(1)$ be the flat unitary structure associated to the holomorphic line bundle $L$. Then there exists a lift $\tilde{\tau} : L \to L$ of $\tau$ with $\tilde{\tau} \circ \tilde{\tau} = \epsilon = \rho(\mu)$.
\end{prop}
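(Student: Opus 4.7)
My plan is to adapt the argument of Proposition \ref{prop:taufixed} to the fixed-point-free setting, using the machinery of Section \ref{sec:ac.fund} to track the obstruction to $\tilde{\tau}$ being an involution. In the fixed-point case one could choose $x_0$ to be a fixed point and $l$ to be the constant path, so $\hat{\tau}$ was a genuine involution; now that $\tau$ has no fixed points on $S$, only $\hat{\tau}^2$ is inner, with $\hat{\tau}^2(\gamma) = \mu \gamma \mu^{-1}$ for $\mu = l \cdot \tau(l)$, and this discrepancy is precisely what $\tilde{\tau}^2$ will measure.

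More concretely, first I would form the equivariant lift $\tau' : \tilde{S} \to \tilde{S}$ of $\tau$ to the universal cover, characterized by $\tau'(pg) = \tau'(p) \hat{\tau}(g)$; such a lift exists by picking any preimage of $\tau(x)$ under the covering map and extending by unique path-lifting. Since $\tau^2 = \mathrm{id}$, the map $\tau'^2$ covers the identity and therefore equals a deck transformation, and a direct computation identifies this deck transformation with the loop $\mu = l \cdot \tau(l) \in \pi_1(S,x)$. I would then define
\begin{equation*}
\tilde{\tau} : L \to L, \qquad \tilde{\tau}([p,s]) = [\tau'(p), \overline{s}],
\end{equation*}
where $L = \tilde{S} \times_\rho \mathbb{C}$. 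Well-definedness of $\tilde{\tau}$ on equivalence classes reduces to verifying $\rho(\hat{\tau}(g)) = \overline{\rho(g)}$ for every $g \in \pi_1(S,x)$, which is exactly the holonomy translation of the hypothesis $\tau^*(\overline{L}) = L$. Then
\begin{equation*}
\tilde{\tau}^2([p,s]) = [\tau'^2(p), s] = [p \cdot \mu, s] = [p, \rho(\mu) s],
\end{equation*}
so $\tilde{\tau} \circ \tilde{\tau}$ is multiplication by the scalar $\rho(\mu)$, as claimed.

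The remaining point, which I regard as the main subtlety rather than a routine check, is that $\rho(\mu)$ is a square root of unity so that the formula $\epsilon = \rho(\mu)$ genuinely produces $\pm 1$. I would verify this by computing $\hat{\tau}(\mu)$: since $\tau_*$ is involutive on paths, $\tau(\mu) = \tau(l) \cdot l$ and hence $\hat{\tau}(\mu) = l \cdot \tau(l) \cdot l \cdot l^{-1} = \mu$. Applying $\rho \circ \hat{\tau} = \overline{\rho}$ to $\mu$ then yields $\rho(\mu) = \overline{\rho(\mu)}$, so $\rho(\mu) \in \mathbb{R} \cap U(1) = \{\pm 1\}$. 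One should also check that, although $\mu$ depends on the path $l$, different choices produce loops in the same free homotopy class, and since $U(1)$ is abelian this leaves $\rho(\mu)$ unchanged, so $\epsilon$ is intrinsic to the data $(L,\tau)$.
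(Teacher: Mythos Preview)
Your proposal is correct and follows essentially the same approach as the paper: lift $\tau$ to the universal cover, define $\tilde{\tau}$ on the associated bundle $L = \tilde{S} \times_\rho \mathbb{C}$ by conjugating the fibre coordinate, and read off $\tilde{\tau}^2$ from the deck transformation $\tau'^2$. The only cosmetic difference is that the paper records $\tau'^2(p) = p\mu^{-1}$ and hence $\tilde{\tau}^2 = \rho(\mu)^{-1}$, whereas you obtain $\rho(\mu)$; since both arguments establish $\rho(\mu) = \rho(\mu)^{-1} = \pm 1$ via $\hat{\tau}(\mu) = \mu$ and $\rho \circ \hat{\tau} = \overline{\rho}$, this distinction is immaterial. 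Your additional remark on the independence of $\epsilon$ from the choice of $l$ is a welcome clarification not spelled out in the paper.
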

\begin{proof}
The proof is nearly identical to that of Proposition \ref{prop:taufixed}, except now the lift $\tau' : \tilde{S} \to \tilde{S}$ satisfies $\tau'(\tau'(p)) = p \mu^{-1}$, hence the lift $\tilde{\tau} : L \to L$ given by $\tilde{\tau}(p,s) = (\tau(p) , \overline{s})$ squares to $\rho(\mu)^{-1}$. Note also that since $\rho \circ \hat{\tau} = \overline{\rho}$ and $\hat{\tau}(\mu) = \mu$, we have that $\rho(\mu) = \rho(\mu)^{-1} = \pm 1$. 
\end{proof}

 
\section{Langlands duality for $\CL_{G_{c}}$}\label{sec:lang}

Let $^{L}G_c$ denote the Langlands dual group of $G_c$. The bases $\CA_{G_c}$, $\CA_{^{L}G_c}$ of the Hitchin fibrations $h : \CM_{G_c} \to \CA_{G_c}$ and $h : \CM_{^{L}G_c} \to \CA_{^{L}G_c}$ can naturally be identified so that $\CM_{G_c},\CM_{^{L}G_c}$ are torus fibrations over a common base $\CA_{G_c} \simeq \CA_{^{L}G_c}$. Langlands duality is then interpreted as the statement that the moduli spaces $\CM_{G_c},\CM_{^{L}G_c}$ are duals in the sense of mirror symmetry. One aspect of this duality is that the connected components of the non-singular fibres of $\CM_{G_c}$ and $\CM_{^{L}G_c}$ should be dual abelian varieties.

From \cite[Section 12.4]{Kap}, under Langlands duality we know that  $(A,B,A)$-branes in $\CM_{G_{c}}$ map to $(A,B,A)$-branes in the moduli space $\CM_{^{L}G_{c}}$ of $^{L}G_{c}$-Higgs bundles for $^{L}G_{c}$. Informally, the mapping of branes can be thought of as a fibrewise Fourier-Mukai transform. Hence we know that $\CL_{G_{c}}\subset \CM_{G_{c}}$ maps under the duality to an $(A,B,A)$-brane $^{L}\CL_{G_{c}} \subset \CM_{^{L}G_{c}}$. We claim that the dual $(A,B,A)$-brane $^{L}\CL_{G_c} \subset \mathcal{M}_{^{L}G_c}$ coincides with the $(A,B,A)$-brane $\mathcal{L}_{^{L}G_c}$.

Consider first the case $G_c = \, ^{L} G_c = GL(n,\mathbb{C})$ and restrict attention to the moduli space of degree $0$ Higgs bundles $\CM_{GL(n,\mathbb{C})} = \CM_{GL(n,\mathbb{C})}^{{\rm Higgs},0}$. Let $p : S \to \Sigma$ be the spectral curve corresponding to a generic point $a \in \CA_{GL(n,\mathbb{C})}$. After fixing a choice of spin structure the fibre of $\CM_{GL(n,\mathbb{C})}$ is given by the Jacobian $Jac(S)$, and the self-duality of $\CM_{GL(n,\mathbb{C})}$ is reflected in the self-duality of $Jac(S)$. Now let $L \subset \CA_{GL(n,\mathbb{C})}$ be the subspace of $\CA_{GL(n,\mathbb{C})}$ fixed by the induced action of $f$, so that if $a \in L$ then $f$ lifts to an involution $\tilde{f} : S \to S$. Then $\CL_{GL(n,\mathbb{C})}$ fibres over $L$ with fibre  corresponding to the subspace of $Jac(S)$ fixed by the map $\iota_0 : Jac(S) \to Jac(S)$ as in Definition \ref{def:iota0}. Let $B \subset Jac(S)$ be the fixed point set of $\iota_0$. 

The self-duality of $\CL_{GL(n,\mathbb{C})}$ is reflected in the fact that $B$ is a Lagrangian submanifold of $Jac(S)$, where the symplectic structure on $Jac(S)$ is given by the standard principal polarization. This follows since the map $H^1(S,\mathbb{R}) \to H^1(S,\mathbb{R})$ corresponding to $\iota_0$ is given by $x \mapsto -\tilde{f}^*(x)$, which is anti-symplectic.

To understand the above claim we need a brief digression into Fourier-Mukai duality. Let $A,\hat{A}$ be dual abelian varieties. There is a natural dual pairing of $H_1(A,\mathbb{R})$ and $H_1(\hat{A},\mathbb{R})$. Given a subspace $V \subseteq H_1(A,\mathbb{R})$ there is a corresponding subspace $\hat{V} \subseteq H_1(\hat{A},\mathbb{R})$, namely the annihilator $\hat{V} = V^\perp$ of $V$. Under the cohomological Fourier-Mukai transform \cite{huy} $H^*(A , \mathbb{R}) \to H^*(\hat{A},\mathbb{R})$, the Poincar\'{e} dual cohomology classes $\eta_V \in H^*(A,\mathbb{R})$ and  $\eta_{\hat{V}} \in H^*(\hat{A},\mathbb{R})$ correspond to one another (up to a sign factor). Based on this we say that closed subtori $B \subset A$ and  $\hat{B} \subset \hat{A}$ are Fourier-Mukai dual if the corresponding subspaces $V \subset H_1(A,\mathbb{R})$ and  $\hat{V} \subset H_1(\hat{A},\mathbb{R})$ are annihilators of one another.
When $A$ is self-dual we can use a principal polarization to identify $H_1(A,\mathbb{R})$ with $H_1(\hat{A},\mathbb{R})$ and we find that a subspace $V \subseteq H_1(A,\mathbb{R})$ is self-dual if and only if it is a Lagrangian subspace. This explains our claim that $\mathcal{L}_{GL(n,\mathbb{C})}$ is a self-dual brane.

Consider now the case $G_c = SL(n,\mathbb{C})$, for which  $^{L}G_c = PGL(n,\mathbb{C})$. Clearly we have an identification of the bases $\CA_{SL(n,\mathbb{C})} \simeq \CA_{PGL(n,\mathbb{C})}$. Let $a$ be a generic point of the base with corresponding spectral curve $p : S \to \Sigma$. To see the duality of the fibres we note that the exact sequence
\begin{equation*}
\xymatrix{
1 \ar[r]& {\rm Prym}(S,\Sigma) \ar[r]& {\rm Jac}(S) \ar[r]^{Nm}& {\rm Jac}(\Sigma) \ar[r]& 1}
\end{equation*}
dualises to
\begin{equation*}\xymatrix{
1 \ar[r]& {\rm Jac}(\Sigma) \ar[r]^{p^*}& {\rm Jac}(S) \ar[r]& \hat{{\rm Prym}}(S,\Sigma) \ar[r]& 1.}
\end{equation*}
In particular this shows that the dual of ${\rm Prym}(S,\Sigma)$ is ${\rm Jac}(S)/ p^*({\rm Jac}(\Sigma))$ which is easily seen to describe the spectral data for $PGL(n,\mathbb{C})$-Higgs bundles.

Let $a$ be a point in the base which is fixed by the action of $f$, so that $f$ lifts to $\tilde{f} : S \to S$. As before, denote by  $B \subset {\rm Jac}(S)$   the fixed point set of $\iota_0$. Then the fibre of $\CL_{SL(n,\mathbb{C})}$ over $a$ is $B \cap {\rm Prym}(S,\Sigma)$, and the fibre of $\CL_{PGL(n,\mathbb{C})}$ is the image of $B$ in ${\rm Jac}(S)/ p^*({\rm Jac}(\Sigma))$. The components of these fibres are then seen to be dual in the sense described above.


\section{Higgs bundles and 3-manifolds}\label{sec:3man}



One can construct a canonical 3-manifold $M$ from the compact Riemann surface $\Sigma$, together with an anti-holomorphic involution $f$ as introduced in Section \ref{sec:anti-inv}, which relates the $(A,B,A)$-brane $\CL_{G_{c}}$, and representations on $M$ and on the boundary $\partial M$. For this, we consider the product 
$\overline{\Sigma}=\Sigma \times [-1,1].$
On $\overline{\Sigma}$ there is a natural involution  
\begin{eqnarray*}
\sigma: (x,t)\mapsto (f(x),-t),
\end{eqnarray*}
which is orientation preserving, and   a product action  (e.g., \cite[Section 1]{km}). 
The quotient  $M=\bar \Sigma /\sigma$ is a 3-dimensional manifold with boundary $\partial M =\Sigma$.
From $M$ we obtain a distinguished subspace of representations of $\pi_1(\Sigma)$, namely those representations which viewed as flat connections on $\Sigma$, extend to flat connections over $M$. 

\begin{proposition}The representations of $\Sigma$ which extend to $M$ belong to the $(A,B,A)$-brane introduced in this paper. \end{proposition}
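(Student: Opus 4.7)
\medskip

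\noindent\textbf{Proof proposal.} The plan is to interpret the statement via flat connections: the fixed point set $\mathcal{L}_{G_c}$ of $f$ on ${\rm Rep}^+(\pi_1(\Sigma),G_c)$ consists of gauge equivalence classes of flat $G_c$-connections $\nabla$ on $\Sigma$ with $f^*\nabla \sim \nabla$. So I want to show that any flat connection on $\Sigma$ which extends to a flat connection on $M$ satisfies this pullback invariance.

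\medskip

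\noindent First I would introduce the two natural inclusions $i_{\pm}:\Sigma\hookrightarrow\bar\Sigma$ sending $x\mapsto(x,\pm 1)$, together with the quotient map $p:\bar\Sigma\to M$. By the definition of $\sigma$, one has the identity
\begin{equation*}
p\circ i_{-}\;=\;p\circ i_{+}\circ f,
\end{equation*}
since $[x,-1]=[\sigma(x,-1)]=[f(x),1]$ in $M$. Given a flat $G_c$-connection $\tilde\nabla$ on $M$ extending the flat connection $\nabla$ associated to a representation $\rho\in{\rm Rep}^+(\pi_1(\Sigma),G_c)$, set $\nabla_{\pm}:=i_{\pm}^*\,p^*\tilde\nabla$. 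By construction $\nabla_{+}=\nabla$, while the identity above yields $\nabla_{-}=f^*\nabla$.

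\medskip

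\noindent The key step is then to observe that $i_{+}$ and $i_{-}$ are homotopic through the product cylinder $\bar\Sigma=\Sigma\times[-1,1]$ (e.g.\ via the straight line homotopy in the $t$-variable), so the two flat connections $\nabla_{+}$ and $\nabla_{-}$ are obtained as pullbacks of the single flat connection $p^*\tilde\nabla$ on $\bar\Sigma$ along homotopic maps. Parallel transport along the cylinder therefore produces a gauge transformation on $\Sigma$ intertwining them, and so $\nabla$ and $f^*\nabla$ define the same class in ${\rm Rep}^+(\pi_1(\Sigma),G_c)$. This is precisely the condition that the class of $\rho$ lie in $\mathcal{L}_{G_c}$, so by the identification ${\rm Rep}^+(\pi_1(\Sigma),G_c)\simeq\mathcal{M}_{G_c}$ of Section~\ref{sec:higgs} we conclude that the corresponding Higgs bundle sits inside the $(A,B,A)$-brane of Theorem~\ref{teo1}.

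\medskip

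\noindent The only subtlety, and the main point to be careful about, is the basepoint bookkeeping: $\hat f$ is defined only up to inner automorphism and the representation $\rho$ only up to conjugation, so one must check that the gauge equivalence above is compatible with these choices. This is automatic once one passes to the moduli space ${\rm Rep}^+(\pi_1(\Sigma),G_c)$, since there inner automorphisms act trivially; working directly with gauge equivalence classes of flat connections (as above) bypasses the issue entirely. A minor additional check is that the extension $\tilde\nabla$ can be assumed smooth across the singular locus of $M$ coming from the fixed circles of $f$; since $M$ is globally a manifold (as noted in Section~\ref{sec:3man}) and flat $G_c$-connections are determined by their holonomy representation of $\pi_1(M)$, no regularity issue arises.
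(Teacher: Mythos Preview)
Your argument is correct. Both your proof and the paper's reduce to the observation that the two natural ways of mapping $\Sigma$ into $M$ differ by $f$ and induce the same map on representation varieties, but you implement this differently. You use the two \emph{boundary} inclusions $i_{\pm}$ at $t=\pm 1$, the identity $p\circ i_{-}=p\circ i_{+}\circ f$, and then invoke homotopy invariance of flat connections along the cylinder to conclude $\nabla\sim f^{*}\nabla$. The paper instead uses the \emph{central} inclusion $i(x)=(x,0)$, where one has the stronger pointwise equality $j\circ f=j$ for $j=p\circ i$ (since $\sigma(x,0)=(f(x),0)$), and then argues directly with fundamental groups: tracking the basepoint change path $\gamma$ to a loop $\tau$ in $M$ yields the commutation $j_{*}\circ\hat f={\rm Ad}_{\tau}\circ j_{*}$, so any $\rho:\pi_{1}(M)\to G_{c}$ restricts to a class fixed by $\hat f$. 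Your flat-connection formulation sidesteps the explicit basepoint bookkeeping and is perhaps more transparent geometrically; the paper's version makes the conjugating element $\rho(\tau)$ explicit, which is a slight gain if one later wants to analyse the fixed-point condition more finely. The remark about smoothness of $M$ is unnecessary: $\sigma$ acts freely on $\bar\Sigma$ (it has no fixed points since $t\mapsto -t$ fixes only $t=0$ and there $f$ moves points unless $x$ is $f$-fixed, but even then $(x,0)\neq\sigma(x,0)=(f(x),0)$ fails---actually $\sigma$ does fix $(x,0)$ for $x$ fixed by $f$, but the quotient is still a manifold with boundary, as you note).
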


\begin{proof} Let $i : \Sigma \to \overline{\Sigma}$ be the inclusion $i(x) = (x,0)$. Then clearly $i \circ f = \sigma \circ i$. Fix a point $x_0 \in \Sigma$ and choose a path $\gamma$ in $\Sigma$ from $x_0$ to $f(x_0)$. Recall from Section \ref{sec:ac.fund} that the automorphism $\hat{f} : \pi_1(\Sigma , x_0) \to \pi_1(\Sigma,x_0)$ is given by $\hat{f}(u) = \gamma . f(u) . \gamma^{-1}$. Let $m_0 \in M$ be the point of $M$ corresponding to $(x_0,0)$ and $\tau$ the image of $i(\gamma)$ in $M$, which is a loop at $m_0$. Let $j : \Sigma \to M$ be the composition of $i$ with the projection $\overline{\Sigma} \to M$. We then have a commutative diagram 
\begin{equation*}\xymatrix{
\pi_1(\Sigma , x_0 ) \ar[r]^{j_*} \ar[d]^{\hat{f}} & \pi_1(M,m_0) \ar[d]^{{\rm Ad}_\tau} \\
\pi_1(\Sigma , x_0 ) \ar[r]^{j_*} & \pi_1(M,m_0)
}
\end{equation*} 
where ${\rm Ad}_\tau$ denotes conjugation by $\tau$. For $\rho : \pi_1(M,m_0) \to G_c$ a representation of $M$,  we  have $\rho \circ j_* \circ \hat{f} = {\rm Ad}_{\rho(\tau)} \circ \rho \circ j_*$. Hence $\rho \circ j_*$ defines a fixed point of the action of $f$ on ${\rm Rep}^+(\pi_1(\Sigma) , G_c)$ as required.
\end{proof}
 The study of this correspondence, as well as the further characterisation of the $(A,B,A)$-branes $\CL_{G_{c}}$ shall appear in the companion paper \cite{BS13}.
 
\newpage

 \appendix

\section{Anti-holomorphic involutions on hyperelliptic surfaces}\label{sec:hyper}

 As seen in \cite[Section 6]{GH81}, in the case of hyperelliptic curves more information can be deduced concerning the topological invariants $(n,a)$. We shall see here some examples. 

\subsection{Classification}\label{sec:clas}

We shall begin by classifying pairs $(\Sigma , f)$ where $\Sigma$ is a hyperelliptic curve of genus $g \ge 2$ and $f : \Sigma \to \Sigma$ an anti-holomorphic involution.
\begin{definition}\label{def:conj}
Let $\tau : \mathbb{P}^1 \to \mathbb{P}^1$ be the anti-holomorphic involution which in projective coordinates $[z_1,z_2]$ is given by $\tau( [z_1,z_2] ) = [\overline{z_1} , \overline{z_2}]$. Let $\alpha : \mathbb{P}^1 \to \mathbb{P}^1$ be given by $\alpha( [z_1,z_2] ) = [ z_2 , -z_1]$ so that $\alpha \tau = \tau \alpha$ is the antipodal map.\end{definition}

\begin{prop}
There exists a non-constant meromorphic function $z : \Sigma \to \mathbb{P}^1$ with polar divisor $z^{-1}(\infty)$ of degree $2$ such that either $z( f(w) ) = \tau(z(w))$ or $z(f(w)) = \alpha \tau( z(w))$. In addition, we may choose $z$ such that $\infty = [0,1] \in \mathbb{P}^1$ is not a branch point.
\end{prop}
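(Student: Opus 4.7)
The plan is to obtain $z$ as a suitable modification of the hyperelliptic quotient map of $\Sigma$. Since $g \geq 2$, the hyperelliptic involution $\iota : \Sigma \to \Sigma$ is the unique involution of $\Sigma$ whose quotient is $\mathbb{P}^1$; by this uniqueness $\iota$ is central in the full group of holomorphic and anti-holomorphic automorphisms of $\Sigma$, so in particular $f\iota = \iota f$. The first step is therefore to form the quotient $z_0 : \Sigma \to \Sigma/\iota \simeq \mathbb{P}^1$, a degree-$2$ meromorphic function whose branch values are the images of the $2g+2$ Weierstrass points, and to observe that $f$ descends to a map $\bar f$ of $\mathbb{P}^1$ satisfying $z_0 \circ f = \bar f \circ z_0$. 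Since $z_0$ is holomorphic and surjective and $f$ is an anti-holomorphic involution, $\bar f$ is automatically an anti-holomorphic involution of $\mathbb{P}^1$.

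Next I would put $\bar f$ into one of the two standard real forms. Anti-holomorphic involutions of $\mathbb{P}^1$ fall into exactly two conjugacy classes under $\mathrm{PGL}(2,\mathbb{C})$: the standard conjugation $\tau$, whose fixed set is $\mathbb{RP}^1$, and the antipodal map $\alpha\tau$, which is fixed-point-free. Choosing $M \in \mathrm{PGL}(2,\mathbb{C})$ with $M\bar f M^{-1} \in \{\tau,\alpha\tau\}$ and setting $z_1 := M \circ z_0$ yields
\begin{equation*}
z_1 \circ f \;=\; (M\bar f M^{-1}) \circ z_1,
\end{equation*}
which is one of the two required intertwining relations.

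The final step is to arrange that $\infty$ is not a branch value. The only remaining freedom is to post-compose $z_1$ by an element $N$ of the stabiliser of $M\bar f M^{-1}$ in $\mathrm{PGL}(2,\mathbb{C})$. A short matrix computation, writing $N$ as a $2\times 2$ matrix and equating $N\tau = \tau N$, respectively $N(\alpha\tau) = (\alpha\tau) N$, identifies these stabilisers with $\mathrm{PGL}(2,\mathbb{R})$ in the $\tau$ case and with $\mathrm{PSU}(2) \cong \mathrm{SO}(3)$ in the $\alpha\tau$ case. Their orbits through $\infty$ are $\mathbb{RP}^1$ and all of $\mathbb{P}^1$ respectively, both infinite, whereas the branch value set of $z_1$ has at most $2g+2$ points; I can therefore choose $N$ with $N^{-1}(\infty)$ avoiding every branch value, and $z := N \circ z_1$ is the desired function.

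The essential conceptual input is the centrality of $\iota$, without which $f$ would not descend to $\mathbb{P}^1$ at all. The only nontrivial computation is the identification of the two stabilisers in the final step; everything else reduces to the classical classification of real forms of $\mathbb{P}^1$ together with a transversality count against a finite branch set.
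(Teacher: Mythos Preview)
Your proof is correct and follows essentially the same route as the paper: descend $f$ to an anti-holomorphic involution of $\mathbb{P}^1$, normalise it to $\tau$ or $\alpha\tau$, then use the residual $\mathrm{PGL}(2,\mathbb{R})$ or $\mathrm{PSU}(2)$ freedom to move $\infty$ off the branch locus. The only difference is packaging: you invoke the centrality of the hyperelliptic involution and the classification of real forms of $\mathbb{P}^1$ as known facts, whereas the paper reaches the same conclusions by an explicit matrix computation (writing $\tau\circ z'\circ f = M\circ z'$, observing $\overline{M}M=\pm I$, and solving $\overline{A}^{-1}A=M$ or $\overline{A}^{-1}JA=M$).
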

\begin{proof}
Since $\Sigma$ is hyperelliptic there exists a meromorphic function $z' : \Sigma \to \mathbb{P}^1$ which has two simple  poles, or a single pole of order $2$. Any other meromorphic function with this property is obtained from $z'$ by a M\"obius transformation. In particular this applies to $\tau \circ z' \circ f$, so there exists a matrix $M \in GL(2,\mathbb{C})$ such that $\tau \circ z' \circ f = M \circ z'$, where $M$ acts on $\mathbb{P}^1$ as a M\"obius transformation. Note that since $\tau$ and $f$ are involutions we have $\overline{M} M = \pm I$. Choose a matrix $A \in GL(2,\mathbb{C})$ and set $z = A \circ z'$. We have $\tau \circ z \circ f = \overline{A} M A^{-1} z$.

If $\overline{M}M = I$ then we can choose $A \in GL(2,\mathbb{C})$ such that $\overline{A}^{-1}A = M$. Thus $z \circ f = \tau \circ z$ as required. By composing with a transformation in $GL(2,\mathbb{R})$ we can ensure that $\infty$ is not a branch point of $z$.
If $\overline{M}M = -I$ then we can choose $A \in GL(2,\mathbb{C})$ such that $\overline{A}^{-1} J A = M$, where $J$ is the linear transformation $J(z_1,z_2) = (z_2 , -z_1)$. Thus $z \circ f = \tau \alpha \circ z$ as required. The linear transformations in $GL(2,\mathbb{C})$ commuting with the antipodal map $(z_1,z_2) \to (\overline{z_2} , -\overline{z_1})$ form  the group $SU(2)$. Composing with an element of $SU(2)$ we can ensure that $\infty$ is not a branch point of $z$.
\end{proof}

We have established that the anti-holomorphic involution $ f$ on $\Sigma$ covers an anti-holomorphic involution on $\mathbb{P}^1$ which is either the conjugation map $\tau$ or the antipodal map $\alpha \tau$. Let $z : \Sigma \to \mathbb{P}^1$ be the meromorphic function as above. Then $z$ exhibits $\Sigma$ as a branched double cover of $\mathbb{P}^1$. We may choose $z$ so that $\infty$ is not a branch point, and let $P_1, \dots , P_{2g+2} \in \Sigma$ be the branch points with $z(P_1), \dots , z(P_{2g+2}) \in \mathbb{C} \subset \mathbb{P}^1$ their images in $\mathbb{P}^1$. On $\Sigma$ there is a meromorphic function $w$ satisfying 
\begin{eqnarray*}w^2 =  \prod_{j=1}^{2g+2}(z - z(P_j)):=p(z)\end{eqnarray*} 
 and a holomorphic involution $\sigma : \Sigma \to \Sigma$, the hyperelliptic involution,  defined by exchanging the sheets of the branched covering $\Sigma \to \mathbb{P}^1$. Thus $z \circ \sigma = z$ and $w \circ \sigma = -w$.

\begin{proposition}
If the anti-holomorphic involution $f$ covers the conjugation map $\tau$ from Definition \ref{def:conj}, then it is given by either $f': (w,z)\mapsto (\bar w,\bar z)$ or by $\sigma \circ f'$.
\end{proposition}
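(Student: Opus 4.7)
The plan is to unpack what it means for $f$ to cover $\tau$ and exploit the defining equation $w^2 = p(z)$ to force $f$ into one of the two claimed forms. Since $z \circ f = \tau \circ z$, we may write
\begin{equation*}
f(w,z) = (W(w,z), \bar z)
\end{equation*}
for some anti-holomorphic function $W$ on $\Sigma$, and the task is to identify $W$ with $\bar w$ or $-\bar w$.

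First I would establish that $p(\bar z) = \overline{p(z)}$, i.e.\ that the branch locus $\{z(P_1),\dots,z(P_{2g+2})\} \subset \mathbb{C}$ is stable under complex conjugation. The cleanest route is to note that the hyperelliptic involution $\sigma$ is the unique non-trivial deck transformation of the double cover $z : \Sigma \to \mathbb{P}^1$, so $\sigma$ is characterised intrinsically (for $g\ge 2$ it lies in the centre of the automorphism group of $\Sigma$), and in particular commutes with $f$. Consequently $f$ permutes the fixed point set $\{P_j\}$ of $\sigma$, and applying $z$ shows that $\tau$ permutes $\{z(P_j)\}$, giving the required reality of $p$.

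Next, squaring the relation $W^2 = p(\bar z)$ and using $p(\bar z) = \overline{p(z)} = \overline{w^2} = \bar w^2$ yields $W(w,z)^2 = \bar w^2$ at every point of $\Sigma$. Thus $W/\bar w$ is a well-defined continuous function on the dense open set $\Sigma \setminus \{P_1,\dots,P_{2g+2}\}$ taking values in $\{+1,-1\}$. Since removing finitely many points from the connected Riemann surface $\Sigma$ leaves a connected space, the sign $\epsilon \in \{+1,-1\}$ is globally constant, so $W = \epsilon \bar w$ on the open set and hence on all of $\Sigma$ by continuity. This yields exactly the two possibilities $f = f'$ (when $\epsilon=1$) or $f = \sigma \circ f'$ (when $\epsilon = -1$), since $\sigma(w,z) = (-w,z)$.

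The only subtle point is the reality of the branch locus, which is where the hyperelliptic structure (and the fact that $\sigma$ is canonically attached to $\Sigma$) enters. Everything else is a short algebraic computation together with the connectedness argument to promote a pointwise sign ambiguity to a global one.
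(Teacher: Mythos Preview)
Your proof is correct and follows essentially the same route as the paper's. Both arguments first show that the branch locus $\{z(P_j)\}$ is stable under conjugation (the paper observes directly that $f$ sends ramification points of $z$ to ramification points since $z\circ f=\tau\circ z$, while you reach the same conclusion via $f\sigma f^{-1}=\sigma$; either works), yielding $p(\bar z)=\overline{p(z)}$. The paper then simply notes that any anti-holomorphic lift of $\tau$ to the double cover differs from $f'$ by a deck transformation, hence equals $f'$ or $\sigma\circ f'$; your computation $W^2=\bar w^{\,2}$ together with the connectedness argument is exactly this deck-transformation statement made explicit. One small remark: when you invoke ``$\sigma$ lies in the centre of the automorphism group'', bear in mind that $f$ is anti-holomorphic, so strictly speaking you need $\sigma$ central in the full conformal/anti-conformal automorphism group (true for $g\ge 2$), or the direct argument that $f\sigma f^{-1}$ is a nontrivial deck transformation of $z$, hence equals $\sigma$.
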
\begin{proof}
 For simplicity we  shall write $\overline{z}$ in place of $\tau(z)$. Since $f$ sends branch points of $z$ to branch points, the set of images $z(P_1), \dots , z(P_{2g+2})$ of branch points is left invariant under the conjugation map $\tau$. Therefore each $z(P_j)$ is either real or occurs in a conjugate pair. The polynomial $p(z) = \prod_{j=1}^{2g+2}(z - z(P_j))$ thus satisfies $p( \overline{z} ) = \overline{ p(z) }$. In particular we obtain an anti-holomorphic involution $f' : \Sigma \to \Sigma$ by sending a pair $(w,z)$ such that $w^2 = p(z)$ to the corresponding pair $(\overline{w} , \overline{z})$. Clearly $f'$ covers $\tau$. The only other anti-holomorphic involution covering $\tau$ is given by sending a pair $(w,z)$ to $(-\overline{w} , \overline{z})$ and this is just $\sigma \circ f'$. Thus $f$ is one of $f'$ or $\sigma \circ f'$.\end{proof}

\begin{prop}
Let $2k$ be the number of real roots of $p(z)$. And consider the topological invariants $(n_{\Sigma},a_{\Sigma})$ associated to the pair $(\Sigma , f')$. If $k=0$ then $(n_{\Sigma},a_{\Sigma}) = (1,0)$ if $g$ is even, and $n_{\Sigma}=2$ if $g$ is odd. If $0 < k < g+1$ then $(n_{\Sigma},a_{\Sigma}) = (k,1)$, and if $k=g+1$ then $(n_{\Sigma},a_{\Sigma}) = (g+1,0)$.
\end{prop}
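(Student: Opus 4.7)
I would read both $n_\Sigma$ and $a_\Sigma$ off the sign pattern of $p(z)$ on the real axis, together with the hyperelliptic projection $z:\Sigma\to\mathbb{P}^1$, treating the point at infinity via the chart $(u,v)=(1/z,\,w/z^{g+1})$ in which the defining equation becomes $v^2=u^{2g+2}p(1/u)$; the fibre over $u=0$ consists of the two points $v=\pm 1$, both fixed by $f'$. In the affine chart, fixed points are pairs $(w,z)$ with $w,z\in\mathbb{R}$ and $w^2=p(z)\ge 0$. Each complex conjugate pair of roots of $p$ contributes a strictly positive factor on $\mathbb{R}$, so the sign of $p$ on $\mathbb{R}$ is controlled only by the real roots $r_1<\cdots<r_{2k}$, and the closed set $\{p\ge 0\}\cap\mathbb{R}$ is the union of the two unbounded intervals $(-\infty,r_1]$ and $[r_{2k},\infty)$ together with the $k-1$ bounded intervals $[r_{2j},r_{2j+1}]$ (collapsing to all of $\mathbb{R}$ when $k=0$).

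Over each bounded closed interval $[r_{2j},r_{2j+1}]$, the two real sheets $w=\pm\sqrt{p(z)}$ meet at $w=0$ at the two branch-point endpoints and form a single topological circle, giving $k-1$ circles when $k\ge 1$. For the unbounded part, computing $v=w/z^{g+1}$ shows the upper sheet reaches $v=+1$ at $z=+\infty$ and reaches $v=\pm 1$ at $z=-\infty$ according to the parity of $g+1$. When $k\ge 1$, the branch points at $r_1$ and $r_{2k}$ close the two unbounded real arcs together with the two points at infinity into one single circle, independent of $g$, so $n_\Sigma=(k-1)+1=k$ for $1\le k\le g+1$. When $k=0$ there are no real branch points and the two full real sheets close up at infinity: for $g$ even the sheets interchange through infinity and form a single circle ($n_\Sigma=1$), while for $g$ odd they stay separate and form two circles ($n_\Sigma=2$).

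To compute $a_\Sigma$ I would analyse $\Sigma\setminus\mathrm{Fix}(f')$ through the decomposition $\mathbb{P}^1\setminus(\mathbb{R}\cup\{\infty\})=\mathrm{UHP}\sqcup\mathrm{LHP}$. The non-real roots of $p$ contribute $s=g+1-k$ branch points in $\mathrm{UHP}$, so $z^{-1}(\mathrm{UHP})$ is a double cover of a disc branched at $s$ points: it is connected when $s\ge 1$, and splits into two disjoint discs when $s=0$; the same holds for $\mathrm{LHP}$. The key extra ingredient is that over each open real interval $(r_{2j-1},r_{2j})$ on which $p<0$, the preimage in $\Sigma$ is a circle with $w$ purely imaginary passing through the two branch-point endpoints; removing those branch points (which lie in $\mathrm{Fix}(f')$) leaves two open arcs in $\Sigma\setminus\mathrm{Fix}(f')$ which locally glue $z^{-1}(\mathrm{UHP})$ to $z^{-1}(\mathrm{LHP})$.

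Combining these observations: for $k=0$ there are no bridging arcs, so the two nonempty half-plane preimages are disjoint in the complement and $a_\Sigma=0$ (matching the $g$-even case claimed); for $0<k<g+1$ one has $s\ge 1$ and at least one bridging arc, so $\Sigma\setminus\mathrm{Fix}(f')$ is connected and $a_\Sigma=1$; finally for $k=g+1$ the four disc components $\mathrm{UHP}_\pm,\mathrm{LHP}_\pm$ must be paired by the bridging arcs, and the main technical obstacle is to verify that this pairing is globally consistent. I would handle this by tracking the continuous branch of $\sqrt{p(z)}$ along the boundary of $\mathrm{UHP}_+$ across the alternating real intervals, showing that the local monodromy around each real branch point rotates $\sqrt{p}$ by $\pm\pi/2$ and consequently that every positive-imaginary arc glues $\mathrm{UHP}_+$ to one fixed $\mathrm{LHP}_\varepsilon$ while every negative-imaginary arc glues it to $\mathrm{LHP}_{-\varepsilon}$; this splits the complement into exactly the two components $\mathrm{UHP}_+\cup\mathrm{LHP}_\varepsilon$ and $\mathrm{UHP}_-\cup\mathrm{LHP}_{-\varepsilon}$, giving $a_\Sigma=0$ as claimed.
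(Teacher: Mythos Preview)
Your argument is correct. The paper's own proof is only a two-line sketch (``draw branch cuts between conjugate pairs of roots and joining adjacent pairs of real roots, and read off the invariants''), so in spirit you are doing the same thing: both arguments exploit the hyperelliptic projection $z:\Sigma\to\mathbb{P}^1$ and the real/complex pairing of the roots of $p$. The difference is that the paper leaves the reader to visualise $\Sigma$ as two sheets of $\mathbb{P}^1$ glued along those cuts, whereas you compute everything explicitly from the sign pattern of $p$ on $\mathbb{R}$ and the behaviour in the chart at infinity. Your version is considerably more detailed---in particular your treatment of the $k=0$ case actually yields the extra information $a_\Sigma=0$ when $g$ is odd, which the paper's statement omits.

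One simplification you missed: the monodromy-tracking in the $k=g+1$ case is unnecessary. Once you have established $n_\Sigma=g+1$, the conclusion $a_\Sigma=0$ is forced by the topological constraints listed earlier in the paper (Proposition~\ref{pro:topcond}: $n=g+1$ implies $a=0$). This bypasses what you call the ``main technical obstacle''. Your direct computation is nonetheless valid; a small wording issue is that $\mathrm{UHP}_+$ is not adjacent to the \emph{positive}-imaginary arc on every interval (the sign of $w_0$ alternates with period~$4$ as you cross successive real roots), but the key point---that $\mathrm{UHP}_+$ always meets the \emph{same} $\mathrm{LHP}$-component across every bridging arc---does hold, exactly because $w_0$ and $-w_1$ pick up the same rotation $e^{i\pi/2}$ each time, and that is what gives two components.
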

\begin{proof}
Consider the zeros $z(P_1), \dots , z(P_{2g+2})$ of $p(z)$ in the complex plane. Draw branch cuts between conjugate pairs of roots and joining adjacent pairs of real roots. From this data the invariants $(n_{\Sigma},a_{\Sigma})$ can easily be determined.
\end{proof}

In a similar manner   the topological invariants of the involution $\sigma \circ f'$ can be found.
\begin{prop}
Let $2k$ be the number of real roots of $p(z)$. And consider the topological invariants $(n_{\Sigma},a_{\Sigma})$ associated to the pair $(\Sigma , \sigma \circ f')$. If $0 \le k < g+1$ then $(n_{\Sigma},a_{\Sigma}) = (k,1)$, and if $k=g+1$ then $(n_{\Sigma},a_{\Sigma}) = (g+1,0)$.
\end{prop}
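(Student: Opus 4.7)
The plan is to describe the fixed locus of $\sigma\circ f'$ explicitly in the coordinates $(w,z)$, which will give $n_\Sigma$ directly, and then to extract $a_\Sigma$ from a monodromy computation on an induced double cover of $\mathbb{P}^1$ minus a collection of real arcs. I would first note that $\sigma\circ f'$ sends $(w,z)$ to $(-\bar w,\bar z)$, so its fixed points are exactly those with $z\in\mathbb{R}$ and $w\in i\mathbb{R}$. Away from the branch points this is equivalent to $w^2=p(z)\le 0$; at a real root $r$ of $p$ the unique point $(0,r)$ over $r$ is automatically fixed. Ordering the real roots $r_1<\cdots<r_{2k}$ and using that $p$ is monic of even degree with only simple zeros, one has $p(z)\to +\infty$ as $z\to\pm\infty$ and the sign of $p$ alternates across each real root, so the set $\{z\in\mathbb{R}:p(z)\le 0\}$ is the disjoint union of the $k$ closed intervals $I_j=[r_{2j-1},r_{2j}]$. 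Over each $I_j$ the two arcs $z\mapsto(\pm i\sqrt{-p(z)},z)$ glue at the two branch-point endpoints into an embedded smooth circle, and I would conclude $n_\Sigma=k$.

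For $a_\Sigma$, I would identify $\Sigma\setminus\mathrm{Fix}(\sigma\circ f')$ with the preimage $\tilde U$ of $U:=\mathbb{P}^1\setminus\bigsqcup_{j=1}^k I_j$ under the hyperelliptic projection $z:\Sigma\to\mathbb{P}^1$. Then $\tilde U\to U$ is a double cover, unramified except over the $2(g+1-k)$ non-real roots of $p$, and its connectedness is controlled by the monodromy representation $\pi_1\bigl(U\setminus\{\text{non-real roots of }p\}\bigr)\to\mathbb{Z}_2$ obtained by restricting the nontrivial representation that defines the global $\mathbb{Z}_2$-cover $\Sigma\to\mathbb{P}^1$. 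In this restricted representation a small loop around any individual root of $p$ (real or not) is sent to the nontrivial element of $\mathbb{Z}_2$.

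If $0\le k<g+1$, there is at least one non-real root of $p$, and the small loop around it already has nontrivial image, so the representation is surjective and $\tilde U$ is connected; this gives $a_\Sigma=1$, matching $(n_\Sigma,a_\Sigma)=(k,1)$. If $k=g+1$ all $2g+2$ roots are real, so $U$ is disjoint from the branch locus and $\tilde U\to U$ is unbranched; the monodromy around each removed interval $I_j$ equals the product of the nontrivial monodromies around $r_{2j-1}$ and $r_{2j}$, hence is trivial. The entire classifying homomorphism of $\tilde U\to U$ is therefore trivial, the cover splits into two components, and $a_\Sigma=0$, matching $(n_\Sigma,a_\Sigma)=(g+1,0)$.

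The only genuine subtlety is checking that each maximal interval of $\{p\le 0\}\cap\mathbb{R}$ really lifts to an embedded smooth circle in $\Sigma$ rather than some more degenerate configuration; this reduces to the local normal form $w^2=(z-r)\cdot(\text{nonvanishing})$ at a simple real branch point, under which the two imaginary-$w$ arcs join smoothly at $(0,r)$. The rest is a direct application of the known monodromy description of $\Sigma\to\mathbb{P}^1$ as the $\mathbb{Z}_2$-cover branched at the zeros of $p$.
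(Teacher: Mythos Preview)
Your argument is correct. The identification of the fixed locus of $\sigma\circ f'$ with the preimage of the $k$ intervals $\{p\le 0\}$ on the real axis is accurate, and the monodromy computation for connectedness of the complement is sound in both cases $k<g+1$ and $k=g+1$. The local check at simple real branch points that the two imaginary-$w$ arcs glue to a smooth circle is exactly the point that needs care, and you handle it.

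The paper does not actually give a proof of this proposition: it simply says ``In a similar manner the topological invariants of the involution $\sigma\circ f'$ can be found,'' referring back to the preceding proposition about $f'$, whose proof is the one-line sketch ``Draw branch cuts between conjugate pairs of roots and joining adjacent pairs of real roots. From this data the invariants $(n_\Sigma,a_\Sigma)$ can easily be determined.'' Your approach is the explicit unpacking of that sketch: the branch-cut picture realises $\Sigma$ as two sheets glued along cuts, and reading off $n_\Sigma$ and $a_\Sigma$ from the picture is precisely the fixed-point and monodromy calculation you carry out. So there is no genuinely different route here --- you have supplied the details the paper omits, phrased in monodromy language rather than cut-and-paste language, and the two are equivalent. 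Your version has the advantage of being self-contained and not relying on the reader to correctly interpret a picture; the paper's version has the advantage of brevity.
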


We shall now  consider involutions covering the antipodal map $\alpha \tau$ as in Definition \ref{def:conj}. In this case the antipodal map must permute the zeros of $p(z)$, so the zeros occur in antipodal pairs. For $\lambda := \prod_{j=1}^{2g+2} \overline{z(P_j)}$, it follows that $\lambda$ is of the form $\lambda = \mu \left( \frac{-1}{\overline{\mu}} \right) = -\mu^2/|\mu|^2$, for some non-zero $\mu \in \mathbb{C}$. Define $c = i \mu / |\mu|$ so that $c^2 = \lambda$ and $c\overline{c} = 1$. It is clear that if $w^2 = p(z)$ then $\left( \frac{\overline{w}}{c \overline{z}^{g+1}} \right)^2 = p( -1/\overline{z})$, so we obtain an anti-holomorphic map $h : \Sigma \to \Sigma$ such that $z \circ h = z$ by setting $h(w,z) = ( \left( \frac{\overline{w}}{c \overline{z}^{g+1}} \right) , -1/\overline{z})$. Note that $h$ is an involution if and only if $g$ is odd. We have thus established:
\begin{prop}
If $g$ is even, then there are no anti-holomorphic involutions on $\Sigma$ covering the antipodal map on $\mathbb{P}^1$. If $g$ is odd then there are precisely two such invoutions, $h$ and $\sigma \circ h$. The topological invariants of  $(\Sigma , h)$ and $(\Sigma , \sigma \circ h)$ are $(n_{\Sigma},a_{\Sigma}) = (0,1)$.
\end{prop}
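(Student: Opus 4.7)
The plan is to reduce everything to a careful calculation of $h\circ h$ together with the observation that the antipodal map on $\mathbb{P}^{1}$ acts freely.

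First I would show that any anti-holomorphic map $\tilde f:\Sigma\to\Sigma$ covering $\alpha\tau$ must equal either $h$ or $\sigma\circ h$. Writing $\tilde f(w,z)=(w_1,z_1)$, the relation $z\circ\tilde f=\alpha\tau\circ z$ forces $z_1=-1/\overline z$, and the identity $p(-1/\overline z)=\bigl(\overline w/(c\,\overline z^{\,g+1})\bigr)^{2}$ used to construct $h$ then forces $w_1=\pm\overline w/(c\,\overline z^{\,g+1})$. The two sign choices correspond respectively to $h$ and $\sigma\circ h$, so there are at most two candidate involutions to analyse.

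Next I would compute $h^{2}$ directly. Setting $(w',z')=h(w,z)$, so that $\overline{z'}=-1/z$ and $\overline{w'}=w/(\overline c\, z^{g+1})$, a short substitution gives
\begin{equation*}
h^{2}(w,z)=\left(\frac{\overline{w'}}{c(\overline{z'})^{g+1}},\,-\frac{1}{\overline{z'}}\right)=\bigl((-1)^{g+1}w,\,z\bigr),
\end{equation*}
after using $c\overline c=1$. Hence $h^{2}=\mathrm{id}$ precisely when $g$ is odd, while $h^{2}=\sigma$ when $g$ is even. A one-line check shows that $\sigma$ and $h$ commute, whence $(\sigma\circ h)^{2}=h^{2}$ and the same dichotomy governs $\sigma\circ h$. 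Combined with the classification in the first step, this yields the first two statements: no anti-holomorphic involution covers $\alpha\tau$ when $g$ is even, and exactly the two involutions $h,\sigma\circ h$ exist when $g$ is odd.

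For the topological invariants, I would note that any fixed point of $h$ or of $\sigma\circ h$ must project under $z$ to a fixed point of $\alpha\tau$ on $\mathbb{P}^{1}$; but $z=-1/\overline z$ forces $|z|^{2}=-1$, which has no solution in $\mathbb{C}$, while $\infty$ is sent to $0$. Hence $\alpha\tau$ acts freely on $\mathbb{P}^{1}$, so both involutions act freely on $\Sigma$, giving $n_{\Sigma}=0$. By Proposition \ref{pro:topcond}, $n=0$ forces $a=1$, so $(n_{\Sigma},a_{\Sigma})=(0,1)$. The only real difficulty is the sign bookkeeping in $h^{2}$, which requires careful tracking of conjugates of $c$ and the $(g+1)$-th powers of $z$; once $c\overline c=1$ is invoked at the end, the parity of $g+1$ determines the whole dichotomy automatically.
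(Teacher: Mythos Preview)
Your proposal is correct and follows essentially the same approach as the paper. The paper's argument is the paragraph preceding the proposition: it constructs $h$ explicitly and simply asserts that $h$ is an involution if and only if $g$ is odd, leaving the computation of $h^{2}$, the uniqueness of the two candidates, and the determination of $(n_{\Sigma},a_{\Sigma})$ to the reader; you have filled in precisely these details, including the invocation of Proposition~\ref{pro:topcond} to pass from $n_{\Sigma}=0$ to $a_{\Sigma}=1$.
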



\subsection{Genus 2 case}

Let $z,w$ be the meromorphic functions on $\Sigma$ as in Section \ref{sec:clas}. Then $H^0(\Sigma , K)$ has dimension $g$ and is spanned by $z^j dz/w$ for $j=0,1, \dots , g-1$. A divisor is in the linear system $\mathbb{P}( H^0(\Sigma , K))$ if and only if it is of the form $D = z^{-1}(a_1) + \dots + z^{-1}(a_{g-1})$, where $a_1,\dots , a_{g-1}$ are any $(g-1)$ points on $\mathbb{P}^1$. 

For a hyperelliptic surface of genus $g = 2$ the anti-holomorphic involution $f$ must cover the conjugation map on $\mathbb{P}^1$. Moreover, since the natural map $H^0(\Sigma , K) \to H^0(\Sigma , K^2)$ is surjective, in this case we can describe the linear system of quadratic differentials. These all have the form $D = z^{-1}(a_1) + z^{-1}(a_2)$ for two points $a_1,a_2 \in \mathbb{P}^1$. The quadratic differentials $q$ with divisor $(q) = D$ have simple zeros provided that $a_1 \neq a_2$ and that $a_1,a_2$ are not zeros of $p(z)$.

Let $a_1,a_2 \in \mathbb{P}^1$ be distinct points that are not zeros of $p(z)$, and let $q$ be any non-zero quadratic differential with $(q) = D = z^{-1}(a_1) + z^{-1}(a_2)$. Then $f^*(\overline{q})$ has divisor $z^{-1}(\overline{a_1}) + z^{-1}(\overline{a_2})$. Thus to obtain a quadratic differential $q$ with $f^*(\overline{q}) = q$, the points $a_1,a_2$ must be real or conjugates. For such a pair $a_1,a_2 \in \mathbb{P}^1$, the quadratic differentials with divisor $D = z^{-1}(a_1) + z^{-1}(a_2)$ determine a complex $1$-dimensional space of $H^0(\Sigma , K^2)$ invariant under the induced action of $f$. Thus we can find always find a non-zero quadratic differential $q$ with $(q) = D$ and $f^*(\overline{q}) = q$. Moreover $q$ is unique up to the action of $\mathbb{R}^*$.

Consider the spectral curve $S \to \Sigma$ given by the characteristic equation $\lambda^2 = q$ and with induced involution $\tilde{f}(\lambda , x) = (f^*(\overline{\lambda}) , f(x) )$, where $x \in \Sigma$ and $\lambda^2 = q(x)$. Replacing $q$ by any positive multiple of itself will simply require a corresponding rescaling of $\lambda$, so the pair $(S , \tilde{f})$ is essentially independent of such rescalings. On the other hand replacing $q$ by $-q$ will leave $S$ unchanged but replace $\tilde{f}$ by $\pi \circ \tilde{f}$, where $\pi : S \to S$ is the involution which exchanges sheets of $S \to \Sigma$.

To summarise, we choose distinct points $a_1,a_2 \in \mathbb{P}^1$ which are either both real or complex conjugates. Up to positive rescalings there are two non-zero quadratic differentials $q,-q$ with $(q) = (-q) = z^{-1}(a_1) + z^{-1}(a_2)$. In fact, we may take $q$ to be
\begin{equation*}
q = (z-a_1)(z-a_2)(dz)^2 / w^2,
\end{equation*}
if $a_1,a_2$ are finite. If say $a_2 = \infty$ then we omit the factor $(z-a_2)$. From here it is possible to calculate the invariants $(n_+,u/2)$ associated to $q$ as in Section \ref{sec:rank2case}, where $n_+$ is the number of fixed components of $f$ on which $q$ is non-vanishing and positive and $u$ is the number of zeros of $q$ which are fixed by $f$.

\end{document}